\documentclass[a4paper,11pt,reqno,twoside]{amsart}
%
%
\usepackage{graphicx}
\usepackage{epsbox}
\usepackage{subfigure}
\usepackage{color}
\usepackage{cite}
\usepackage{ifpdf}
\usepackage{array}
\usepackage{slashbox}
\usepackage[T1]{fontenc}
\usepackage[latin1]{inputenc}
\usepackage{mathtools}
\usepackage{amsthm,amssymb}
\usepackage{url}



\newcommand*{\C}{\mathbb{C}}
\newcommand*{\R}{\mathbb{R}}
\newcommand*{\Z}{\mathbb{Z}}
\newcommand*{\N}{\mathbb{N}}

%

%
%
%
%
\renewcommand*{\geq}{\geqslant}
\renewcommand*{\leq}{\leqslant}
%
%

\newtheoremstyle{erdfn}
  {}
  {}
  {\itshape}
  {}
  {\bfseries}
  {}
  { }
  {}
\newtheoremstyle{erthm}
  {}
  {}
  {\itshape}
  {}
  {\bfseries}
  {}
  { }
  {}
\newtheoremstyle{errem}
  {}
  {}
  {}
  {}
  {\bfseries}
  {}
  { }
  {}
\theoremstyle{erthm}
\newtheorem{theorem}{Theorem}[section] 

\newtheorem{proposition}[theorem]{Proposition}
\newtheorem{lemma}[theorem]{Lemma}





\theoremstyle{erdfn}

\theoremstyle{errem}

\newtheorem{remark}{Remark}


\numberwithin{equation}{section}
%
%
\addtolength{\textheight}{2cm}
\addtolength{\topmargin}{-1.5cm}
\addtolength{\textwidth}{2cm}
\addtolength{\oddsidemargin}{-1cm}
\addtolength{\evensidemargin}{-1cm}
%

%
\setcounter{tocdepth}{3}
\setcounter{secnumdepth}{3}
\title[A canonical system of differential equations ]%
      {A canonical system of differential equations \\ arising from the Riemann zeta-function} 
\author[M. Suzuki]{Masatoshi Suzuki}

\date{Version of \today}
\address{Department of Mathematics, Tokyo Institute of Technology, 2-12-1 Ookayama, Meguro-ku, Tokyo 152-8551, Japan}
\email{msuzuki@math.titech.ac.jp}
\subjclass[2000]{}
\keywords{}
\AtBeginDocument{%
\mathtoolsset{showonlyrefs,mathic = true}

\begin{abstract}
This paper has two main results, which relate to a criteria for the Riemann hypothesis 
via the family of functions $\Theta_\omega(z)=\xi(\frac{1}{2}-\omega-iz)/\xi(\frac{1}{2}+\omega-iz)$, 
where $\omega>0$ is a real parameter and $\xi(s)$ is the Riemann xi-function. 
The first main result is necessary and
sufficient conditions for 
$\Theta_\omega$ to be a meromorphic inner function in the upper half-plane. 
It is related to the Riemann hypothesis directly whether $\Theta_\omega$ is a meromorphic inner function. 
In comparison with this, 
a relation of the Riemann hypothesis and the second main result is indirect.
It relates to the theory of de Branges, 
which associates a meromorphic inner function and a canonical system of linear differential equations 
(in the sense of de Branges). 
As the second main result, the canonical system associated with $\Theta_\omega$ is constructed explicitly 
and unconditionally under the restriction of the parameter $\omega >1$  
by applying a method of J.-F. Burnol in his recent work on the gamma function 
to the Riemann xi-function. 
If such construction is extended to all $\omega > 0$ unconditionally,  
we get a criterion for the Riemann hypothesis in terms of a family of canonical systems 
parametrized by $\omega>0$, which explains the validity of the Riemann hypothesis 
as positive semidefiniteness of the corresponding family of Hamiltonian matrices. 
\end{abstract}
\maketitle
}
\begin{document}

%
%
\section{Introduction} 
%
%

Let $\zeta(s)$ be the Riemann zeta function. 
The set of all non-trivial zeros of the Riemann zeta function 
coincides with the set of all zeros of the Riemann xi-function 
\begin{equation*}
\xi(s)=\frac{1}{2} s(s-1) \, \pi^{-s/2}\Gamma\left(\frac{s}{2}\right) \, \zeta(s).
\end{equation*} 
The Riemann hypothesis, which is often abbreviated to RH, assert that 
all zeros of $\xi(s)$ lie on the critical line $\Re(s)=1/2$. 
We attempt to understand the nontrivial zeros of the Riemann zeta function 
via the family of functions 
\begin{equation}\label{AB}
A^\omega(z)  := \frac{1}{2}(\xi(s+\omega)+\xi(s-\omega)), \quad
B^\omega(z)  := \frac{i}{2}(\xi(s+\omega)-\xi(s-\omega)), 
\end{equation}
where $s=1/2-iz$ and $\omega$ is a positive real parameter. 
Functions $A^\omega(z)$ and $B^\omega(z)$ take real values on the real line 
and satisfy the functional equations $A^\omega(z)=A^\omega(-z)$ and $B^\omega(z)=-B^\omega(-z)$ 
by the functional equations $\xi(s)=\xi(1-s)$ and $\xi(s)=\overline{\xi(\bar{s})}$. 

If all zeros of $A^\omega(z)$ lie on the real line for every $\omega>0$, 
it implies RH by Hurwitz's theorem in complex analysis. 
Conversely, 
all zeros of $A^\omega(z)$ lie on the real line 
for $\omega \geq 1/2$ unconditionally 
and for $0<\omega<1/2$ under RH 
by a result of Lagarias ~\cite{MR2187786} 
(see also Li~\cite{MR2496465} for an unconditional result for $0<\omega<1/2$). 
We abbreviate to ${\rm RH}(A^\omega)$ (resp.  ${\rm RH}(B^\omega)$) the assertion that 
all zeros of $A^\omega(z)$ (resp.  $B^\omega(z)$) lie on the real line, 
and abbreviate ${\rm RH}(A^\omega)$ and ${\rm RH}(B^\omega)$ as ${\rm RH}(A^\omega,B^\omega)$ . 
Then the above things are stated as follows: 

\begin{proposition}\label{prop_basic}
{\rm RH} holds if and only if ${\rm RH}(A^\omega)$ holds for all $\omega > 0$. 
\end{proposition}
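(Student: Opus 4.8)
The plan is to prove the two implications separately, combining a Hurwitz-type limiting argument (which yields RH from the family of hypotheses $\mathrm{RH}(A^\omega)$) with the cited results of Lagarias and Li (which yield the family from RH). First I would record the limiting relationship underlying both directions. Setting $A^0(z):=\xi(\tfrac12-iz)$, the entireness of $\xi$ ensures that $A^\omega\to A^0$ uniformly on compact subsets of $\mathbb{C}$ as $\omega\to 0^+$, and clearly $A^0\not\equiv 0$. The substitution $s=\tfrac12-iz$ then sets up a dictionary: $z_0$ is a real zero of $A^0$ exactly when $\tfrac12-iz_0$ is a zero of $\xi$ on the critical line $\Re(s)=\tfrac12$, whereas a non-real $z_0$ corresponds to a zero off the line. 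Consequently RH is equivalent to the statement that every zero of $A^0$ is real.

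For the implication that $\mathrm{RH}(A^\omega)$ for all $\omega>0$ forces $\mathrm{RH}$, I would argue by contradiction. If RH fails, the dictionary produces a non-real zero $z_0$ of $A^0$: writing $\tfrac12-iz_0=\rho$ with $\Re(\rho)\neq\tfrac12$, we have $z_0=i(\rho-\tfrac12)$, so $\Im(z_0)=\Re(\rho)-\tfrac12\neq 0$. Fixing a closed disk about $z_0$ that is disjoint from $\mathbb{R}$ and applying Hurwitz's theorem to a sequence $\omega_n\to 0^+$, each $A^{\omega_n}$ (for $n$ large) acquires a zero inside that disk, hence a non-real zero. This contradicts $\mathrm{RH}(A^{\omega_n})$, and RH follows.

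For the converse $\mathrm{RH}\Rightarrow\mathrm{RH}(A^\omega)$ for every $\omega>0$, I would invoke the results quoted above: for $\omega\geq\tfrac12$ all zeros of $A^\omega$ are real unconditionally by Lagarias~\cite{MR2187786}, while for $0<\omega<\tfrac12$ they are real under RH (Lagarias), and in fact unconditionally by Li~\cite{MR2496465}. Combining the two ranges of $\omega$ gives $\mathrm{RH}(A^\omega)$ for all $\omega>0$.

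The only substantive input is the reverse direction in the range $0<\omega<\tfrac12$, which is precisely the cited theorem; granting it, the remainder is routine. The single point demanding care is the Hurwitz step, where one must confirm that the limit $A^0$ is not identically zero (immediate from $\xi\not\equiv 0$) and that the perturbed zeros stay within a neighborhood of $z_0$ avoiding the real axis, so that the non-reality of $z_0$ genuinely transfers to the approximants $A^{\omega_n}$.
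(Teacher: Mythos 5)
Your proposal is correct and follows essentially the same route as the paper, which sketches exactly this argument just before the proposition: the direction $\mathrm{RH}(A^\omega)$ for all $\omega>0$ implies RH via Hurwitz's theorem applied to $A^\omega\to\xi(\tfrac12-iz)$ as $\omega\to0^+$, and the converse via the cited results of Lagarias (and Li). Your write-up merely fills in the routine details (non-vanishing of the limit, the disk avoiding $\mathbb{R}$) that the paper leaves implicit.
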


\noindent
The latter condition is easier to study in that it is currently known to hold for all
$\omega \geq 1/2$.  Also it is known to be related to some operators. 
We will study the latter problem of finding linear differential equation systems with boundary conditions for
which the zeros of $A^\omega(z)$ are eigenvalues, for a suitable range of $\omega$. 
\medskip

It is believed that a promising way to prove RH is the Hilbert-P\'olya conjecture 
which asserts that the non-trivial zeros of the Riemann zeta function correspond to 
eigenvalues of some positive operator if RH is true. 
Therefore, if we refer to Proposition \ref{prop_basic}, it is an interesting problem to find a canonical way realizing the zeros of $A^\omega(z)$ as 
the eigenvalues of some positive operator. 
Fortunately, as shown in \cite{MR2187786} (see also \cite{MR2261101}), 
it is possible for $\omega \geq 1/2$ unconditionally and for $0<\omega<1/2$ under RH 
if we use the theory of de Branges spaces 
that are kind of reproducing kernel Hilbert spaces consisting of entire functions. 
However, unfortunately, RH is used essentially in \cite{MR2187786}  
to construct corresponding de Branges spaces for $0<\omega<1/2$. 

According to a general theory of de Branges spaces, 
there exists a unique canonical system of linear differential equations 
associated with a given de Branges space up to a normalization. 
And also, it is known that a special class of canonical system is transformed 
into a pair of Schr\"odinger equations endowed with a pair of (distributional) potentials. 
At this stage, the validity of ${\rm RH}(A^\omega)$ is encoded in analytic properties of potentials (see \cite{MR2261101}, and also \cite{MR2551895}). 
Hence, a possible way to avoid assuming RH 
in the construction of the de Branges space arising from $A^\omega(z)$ for $0<\omega<1/2$ 
is a direct construction of a pair of potentials without RH. 
However, in general, it is difficult to determine a pair of potentials corresponding to a given de Branges space, 
and it is so for the de Branges space arising from $A^\omega(z)$ even if $\omega \geq 1/2$. 

A goal of the present paper is to describe unconditionally for $\omega > 1$, a canonical system 
and corresponding pair of potentials associated with a de Branges space arising from $A^\omega(z)$ 
in terms of Fredholm determinants of certain compact integral operators (Theorem \ref{thm_4}). 
The restriction $\omega>1$ is expected to be relaxed to $\omega>0$ if RH is true (see comments after Theorem \ref{thm_4} and Section 5 for details). 

In order to explain the above things more precisely, 
we review results on de Branges spaces, canonical systems and model subspaces.

%
%
\subsection{de Branges spaces and canonical system} 
%
%
At first, we review the theory of de Branges spaces according to de Branges~\cite{MR0229011} and Lagarias~\cite{MR2261101,MR2551895} 
(see also Remling~\cite{MR1943095}). 
Let $E$ be an entire function satisfying the Hermite-Biehler condition
\begin{equation}\label{101}
|E(z)| > |E^\sharp(z)| \quad \text{for} \quad \Im(z)>0, 
\end{equation}
where $E^\sharp(z)=\overline{E(\bar{z})}$. 
Then entire function $E$ generates the de Branges space
\begin{equation*}
B(E) := \{ f ~|~ \text{$f$ is entire, $f/E$ and $f^\sharp/E \in  H^2$} \}
\end{equation*}
endowed with norm $\Vert f \Vert_{B(E)} := \Vert f/E \Vert_{L^2(\R)}$, 
where $H^2 = H^2(\C^+)$ is the Hardy space in the upper half-plane $\C^+$ 
which is defined to be the space of all analytic functions $f$ in $\C^+$ endowed with 
norm $\Vert f \Vert_{H^2}^2 := \sup_{v>0} \int_{\R} |f(u+iv)|^2 \, du < \infty$. 
An entire function $F(z)$ is called a {\it real entire function} if $F(z)=F^\sharp(z)\,(:=\overline{F(\bar{z})})$. 
Condition \eqref{101} implies that real entire functions 
\begin{equation*}
A(z) := \frac{1}{2}(E(z)+E^\sharp(z)), \qquad 
B(z) := \frac{i}{2}(E(z)-E^\sharp(z)) 
\end{equation*}
have real zeros only, and these zeros interlace. 
Moreover, if $E(z) \not=0$ on the real line, all zeros are simple (\cite[Lemma 5]{MR0114002}). 
A de Branges space $B(E)$ has an unbounded operator $({\mathsf M},{\frak D}({\mathsf M}))$, 
multiplication by the independent variable $({\mathsf M}f)(z)=zf(z)$ with the domain 
${\frak D}({\mathsf M})=\{f\in B(E) \,|\, zf(z) \in B(E)\}$. 
The multiplication operator ${\mathsf M}$ is symmetric and closed,  
and if ${\frak D}({\mathsf M})$ is dense in $B(E)$, it has deficiency indices $(1,1)$, 
and hence  has a family of self-adjoint extensions ${\mathsf M}_\theta$ parametrized by $\theta \in [0,\pi)$. 
In particular, ${\mathsf M}_{\pi/2}$ and ${\mathsf M}_{0}$ have pure discrete spectrum 
located at zeros of $A(z)$ and $B(z)$ respectively. 

We put the normalization $E(0)=1$ for entire functions $E$ satisfying \eqref{101} for a convenience. 
Then, for a given de Branges space $B(E)$, 
there exists a chain of de Branges spaces $B(E_a) \subset B(E)$, $0<a \leq c \,(\leq \infty)$,    
endowed with a family of entire functions $E_a(z)$ satisfying \eqref{101} and $E_a(0)=1$ 
such that $B(E_a) \subset B(E_{a'})$ for $a < a'$, 
and the parametrized pair of real entire functions $(A_a,B_a):=(\frac{1}{2}(E_a+E_a^\sharp),\frac{i}{2}(E_a-E_a^\sharp))$ satisfies 
the {\it canonical system} 
\begin{equation*}
\frac{\partial}{\partial a} \begin{bmatrix} A_a(z) \\ B_a(z) \end{bmatrix}  
= z \begin{bmatrix} 0 & -1 \\ 1 & 0 \end{bmatrix} H(a) \begin{bmatrix} A_a(z) \\ B_a(z) \end{bmatrix}, 
\quad 
H(a) = \begin{bmatrix} \alpha(a) & \beta(a) \\ \beta(a) & \gamma(a) \end{bmatrix} 
\end{equation*}
of linear differential equations with the initial condition 
\[
\lim_{a \to 0^+}(A_a(z),B_a(z)) = (1,0)
\] 
for each $z \in \C$, and $E_c(z)=E(z)$ 
(see \cite[Theorem 40]{MR0229011}, but note that it is formulated in terms of integral equations).  
Here the matrix $H(a)$ is a {\it measurable} and {\it real positive semidefinite symmetric} matrix for almost all $0<a \leq c$, 
and which is integrable over the interval.  
The matrix $H(a)$ is often called a Hamiltonian of a canonical system. 
These properties of $H(a)$ are crucial, 
because the initial function $E$ can be recovered from $H(a)$ 
by solving the canonical system with the above initial condition 
(\!\!\cite[Theorem 41]{MR0229011}). 
On the other hand, the spectrum of the extended multiplication operator ${\mathsf M}_\theta$ coincides with the spectrum of 
the above canonical system with the boundary condition 
$\lim_{a \to 0^+}(A_a(z),B_a(z)) = (1,0)$ and $A_c(z)\sin \theta - B_c(z)\cos \theta=0$. 

If $H(a)$ is diagonal ($\beta(a)=0$) and $\alpha(a)\gamma(a)=1$ almost everywhere in $(0,c]$, 
the corresponding canonical system is transformed into a pair of Schr\"odinger equations
\begin{equation*}
\left( - \frac{d^2}{da^2} + V^\pm (a) \right)\psi(a,z) = z^2 \psi(a,z), 
\quad
V^\pm(a) = \frac{1}{4}\left(\frac{\alpha'(a)}{\alpha(a)} \right)^2 \pm \frac{1}{2}\left(\frac{\alpha'(a)}{\alpha(a)} \right)^\prime,
\end{equation*} 
and the initial $E$ is recovered by solving the pair of Schr\"odinger equations 
under the corresponding initial conditions. 

Eventually, condition \eqref{101} of $E$ is encoded in analytic properties of $H(a)$ or $V^\pm(a)$. 
In general, it is difficult to determine $H(a)$ or $V^\pm(a)$ for given $E$ 
except for few special examples (see Chapter 3 of \cite{MR0229011}, 
and also \cite{MR2551895, B1}). 
%
%
\subsection{Spectral realization of zeros of $A^\omega$ and $B^\omega$} 
%
%
Suppose that the condition 
\begin{equation}\label{102}
|\xi(s+\omega)|>|\xi(s-\omega)| \quad \text{for} \quad \Re(s)>\frac{1}{2}
\end{equation}
holds. Then we find that $E(z)=E^\omega(z):=\xi(\frac{1}{2}+\omega-iz)$ satisfies \eqref{101} 
by using the functional equations $\xi(s)=\xi(1-s)$ and $\xi(s)=\overline{\xi(\bar{s})}$.  
Thus the de Branges space $B(E^\omega)$ is defined, and ${\rm RH}(A^\omega, B^\omega)$ holds. 
By a result of \cite{MR2187786}, condition \eqref{102} holds for $\omega \geq 1/2$ unconditionally 
and for $0<\omega<1/2$ under RH. 
This is the reason why RH implies ${\rm RH}(A^\omega)$ for all $\omega>0$. 
However, for fixed $\omega>0$, 
condition \eqref{102} is only a sufficient condition to ${\rm RH}(A^\omega, B^\omega)$, 
that is, ${\rm RH}(A^\omega)$ or ${\rm RH}(B^\omega)$ may be true even if condition \eqref{102} does not hold. 

Anyway, we can regard the zeros of $A^\omega(z)$ and $B^\omega(z)$ as discrete spectrum of 
self-adjoint extensions of $({\mathsf M},{\frak D}({\mathsf M}))$ on $B(E^\omega)$ 
for $\omega \geq 1/2$ unconditionally and for $0<\omega<1/2$ under RH. 
Therefore, a natural problem on ${\rm RH}(A^\omega)$ and a spectral realization of the zeros of $A^\omega(z)$  
is to find a way avoiding RH for $0<\omega<1/2$. 
A possible approach is to construct $H(a)$ or $V^\pm(a)$ associated with $B(E^\omega)$ 
without assuming RH, and recover $E^\omega$, $A^\omega$ and $B^\omega$ 
from the canonical system attached to $H(a)$ or 
the pair of Schr\"odinger equations attached to $V^\pm(a)$. 
We attempt to follow this way by using the theory of model subspaces.  

%
%
\subsection{Model subspaces} 
%
%
For further discussions, we review a theory of model spaces according to Havin--Mashreghi~\cite{MR2016246, MR2016247} 
(see also Baranov~\cite{MR1855436}, Makarov--Poltoratski~\cite{MR2215727}). 
A function $\Theta$ is called an {\it inner function} in $\C^+$ 
if it is a bounded analytic function in $\C^+$ 
such that $\lim_{v \to 0^+} |\Theta(u + iv)| = 1$ 
for almost all $u \in \R$ with respect to Lebesgue measure.  
If an inner function $\Theta$ in $\C^+$ is extended to a meromorphic function in $\C$, 
it is called a {\it meromorphic inner function} in $\C^+$. 
It is known that every meromorphic inner function is expressed as 
$\Theta = E^\sharp/E$ by using an entire function $E$ 
satisfying \eqref{101}. 
For an inner function $\Theta$, 
a {\it model subspace} (or coinvariant subspace) $K(\Theta)$ is defined by the orthogonal complement 
\begin{equation}\label{103}
K(\Theta)=H^2 \ominus \Theta H^2,
\end{equation}
where $\Theta H^2 = \{ \Theta(z)F(z) \, |\, F \in H^2\}$. 
%
%
%
It has the alternative representation 
\begin{equation}\label{104}
K(\Theta) = H^2 \cap \Theta \bar{H}^2,
\end{equation}
where $\bar{H}^2 = H^2(\C^-)$ is the Hardy space in the lower half-plane $\C^-$. 
If $\Theta$ is a meromorphic inner function such that $\Theta=E^\sharp/E$, 
the model subspace $K(\Theta)$ is isomorphic to the de Branges space $B(E)$ as a Hilbert space 
by $K(\Theta) \to B(E): \, f \mapsto fE$. 
In particular, $K(\Theta)$ is a reproducing kernel Hilbert space. 
The reproducing kernel of $K(\Theta)$ is given by 
\begin{equation} \label{rp_1}
K(z,w) = \frac{1}{2\pi i} \, \frac{1 - \overline{\Theta(z)}\Theta(w)}{\bar{z} - w} \quad (z,w \in \C^+), 
\end{equation}
and the reproducing formula $f(z)=\langle f, K(z,\cdot)\rangle_{L^2(\R)}\, (f \in K(\Theta),~z \in \C^+)$ 
remains true for $z \in  \R$ if $\Theta$ is analytic in a neighborhood of $u$, 
where $\langle f,g \rangle_{L^2(\R)} = \int_{\R} f(u)\overline{g(u)}du$. 
%
%
\subsection{Model subspaces related to $A^\omega$ and $B^\omega$} 
%
%
Now we apply the theory of model subspaces to the spaces $B(E^\omega)$ of Section 1.2. 
For positive real $\omega$, we define the meromorphic function $\Theta_\omega(z)$ in $\C$ by 
\begin{equation} \label{105_1}
\Theta_\omega(z) := \frac{\xi(\frac{1}{2}-\omega-iz)}{\xi(\frac{1}{2}+\omega-iz)} . 
\end{equation}
%
Then we have 
\begin{equation}\label{105}
\Theta_\omega(z)\Theta_{\omega}(-z)=1 \quad \text{for} \quad z \in \C,  
\end{equation}
\begin{equation}\label{106}
|\Theta_\omega(u)|=1 \quad \text{for} \quad u \in \R,  
\end{equation}
\begin{equation}\label{107}
\Theta_\omega(0)=1, 
\end{equation}
by functional equations $\xi(s)=\xi(1-s)$ and $\xi(\bar{s})=\overline{\xi(s)}$. 

The inequality \eqref{102} can now be reinterpreted as the condition
\begin{equation}\label{108}
|\Theta_\omega(z)|<1 \quad \text{for} \quad \Im(z)>0
\end{equation}
and vice versa. 
Recall that condition \eqref{102} is known to hold for $\omega \geq 1/2$ unconditionally 
and for $0<\omega<1/2$ under RH. By \eqref{106}, when condition \eqref{108} holds, 
it implies that $\Theta_\omega(z)$ is a meromorphic inner function in $\C^+$. 
Therefore, whenever \eqref{108} holds, 
we obtain a model subspace $K(\Theta_\omega)$ 
which is isomorphic to the de Branges space $B(E^\omega)$ generated by $E^\omega(z)=\xi(\frac{1}{2}+\omega-iz)$. 
Here we mention the following equivalence relation.
\begin{proposition}  \label{intro_2}
Let $\omega_0 \geq 0$. Then the following are equivalent: 
\begin{enumerate}
\item $\zeta(s)\not=0$ for $\Re(s)>\frac{1}{2}+\omega_0$, 
\item $\Theta_\omega(z)$ is a meromorphic inner function in $\C^+$ for every $\omega>\omega_0$. 
\end{enumerate}
\end{proposition}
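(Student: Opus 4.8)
The plan is to establish the equivalence by connecting the zero-free region of $\zeta(s)$ to the defining inequality \eqref{108} for the inner function property, using the recorded facts \eqref{105}, \eqref{106}, and \eqref{107}. The key observation is that, by \eqref{106}, the function $\Theta_\omega$ already has modulus one on the real line, so being a meromorphic inner function in $\C^+$ is equivalent to the single condition $|\Theta_\omega(z)| \leq 1$ for $\Im(z)>0$, which by the maximum principle and \eqref{106} is equivalent to \eqref{108} (strict inequality in the open upper half-plane whenever $\Theta_\omega$ is non-constant). Thus I would reduce statement (2) to: for every $\omega>\omega_0$, one has $|\xi(\tfrac12-\omega-iz)| < |\xi(\tfrac12+\omega-iz)|$ whenever $\Im(z)>0$, i.e. the form \eqref{102} of the inequality with $s=\tfrac12-iz$ ranging over $\Re(s)>\tfrac12$.

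For the direction (1) $\Rightarrow$ (2), I would fix $\omega>\omega_0$ and argue that the zero-free region $\zeta(s)\neq 0$ for $\Re(s)>\tfrac12+\omega_0$ forces $|\xi(s+\omega)|>|\xi(s-\omega)|$ on $\Re(s)>\tfrac12$. The natural tool is to examine the logarithm of the quotient $\xi(s+\omega)/\xi(s-\omega)$ via the Hadamard product for $\xi$: writing $\xi(s)=\xi(0)\prod_\rho(1-s/\rho)$ over the nontrivial zeros $\rho$, the ratio $|\xi(s+\omega)/\xi(s-\omega)|$ becomes a product over $\rho$ of factors comparing the distance from $s+\omega$ to $\rho$ against the distance from $s-\omega$ to $\rho$. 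Under hypothesis (1), all zeros satisfy $|\Re(\rho)-\tfrac12|\leq\omega_0<\omega$; pairing each $\rho$ with its functional-equation partner $1-\rho$ (equivalently using $\xi(s)=\xi(1-s)$), one shows each paired factor exceeds $1$ when $\Re(s)>\tfrac12$, because shifting by $+\omega$ moves $s$ farther into the region where $\zeta$ does not vanish than shifting by $-\omega$ does. This is essentially the argument behind Lagarias's result cited in the excerpt, specialized to the threshold $\omega_0$; I would invoke that structure rather than redo the Hadamard estimate from scratch.

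For the converse (2) $\Rightarrow$ (1), I would argue contrapositively: if $\zeta$ had a zero $\rho_0$ with $\Re(\rho_0)=\tfrac12+\delta$ for some $\delta>\omega_0$, I would exhibit some $\omega$ with $\omega_0<\omega<\delta$ for which \eqref{108} fails, so $\Theta_\omega$ is not inner. Concretely, such a zero $\rho_0$ of $\xi$ makes the numerator $\xi(\tfrac12-\omega-iz)$ vanish at a point with $\Im(z)>0$ (namely where $\tfrac12-\omega-iz=\rho_0$, i.e. $z=i(\tfrac12-\omega-\rho_0)$, whose imaginary part is $\delta-\omega>0$), while the denominator $\xi(\tfrac12+\omega-iz)$ at the functional-equation-partner point, together with \eqref{105}, shows $\Theta_\omega$ has a pole in $\C^+$; a meromorphic inner function in $\C^+$ has no poles there, giving the contradiction. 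I would need to check that for $\omega$ strictly between $\omega_0$ and $\delta$ the relevant zero genuinely lands in the open upper half-plane and is not cancelled.

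The main obstacle I anticipate is the direction (1) $\Rightarrow$ (2), specifically controlling the infinite product uniformly and handling the zeros on or near the boundary line $\Re(s)=\tfrac12+\omega_0$ when $\omega$ is close to $\omega_0$: the pairing argument gives strict dominance factor-by-factor, but one must ensure convergence and that the strict inequality survives in the limit, i.e. that no accumulation of near-equality factors degrades \eqref{108} to a mere non-strict bound. The cleanest route is probably to prove the strict inequality on $\Re(s)>\tfrac12$ directly from the product by showing the logarithmic derivative of $|\xi(s+\omega)/\xi(s-\omega)|^2$ has a definite sign, reducing everything to the elementary monotonicity of the map $\sigma\mapsto\log|\sigma+\omega-\rho|-\log|\sigma-\omega-\rho|$ under the constraint $|\Re(\rho)-\tfrac12|\leq\omega_0<\omega$, and then summing. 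The boundary case $\omega_0=0$, where (1) is exactly RH, should fall out as the limiting instance and recovers the known equivalence between RH and \eqref{108} for all $\omega>0$.
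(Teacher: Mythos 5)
Your proposal is correct and follows essentially the same route as the paper: the paper likewise reduces statement (2) to the Hermite--Biehler inequality \eqref{108} and then simply cites Theorem 4 of \cite{MR2220265} for (1)$\Rightarrow$(2) (whose content is the zero-pairing/Hadamard-product argument you sketch) and the method of Theorem 2.3\,(1) of \cite{Su} for (2)$\Rightarrow$(1) (the pole-in-$\C^+$ argument you sketch). One bookkeeping slip in your converse direction: for an off-line zero $\rho_0=\tfrac12+\delta+i\gamma_0$ and $\omega_0<\omega<\delta$, the point $z=i(\delta-\omega)-\gamma_0$ of height $\delta-\omega>0$ is where the \emph{denominator} $\xi(\tfrac12+\omega-iz)$ vanishes (hence a pole of $\Theta_\omega$ in $\C^+$), while the numerator's zero coming from $\rho_0$ sits at height $\delta+\omega$; your stated conclusion and the caveat about possible cancellation (which is avoided for all but countably many $\omega$ in $(\omega_0,\delta)$) remain correct.
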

\begin{proof}
Assume that $0 \leq \omega_0 <1/2$ since we have nothing to say for $\omega \geq 1/2$. 
By applying Theorem 4 of \cite{MR2220265}, we find that (1) implies that \eqref{108} holds for every $\omega>\omega_0$ . 
Thus we obtain (1)$\Rightarrow $(2).  
The converse implication (2)$\Rightarrow$(1) is proved by a way similar to the proof of Theorem 2.3 (1) in \cite{Su}.
\end{proof}

The changing of consideration from $B(E^\omega)$ to $K(\Theta_\omega)$ 
has the advantage that spaces $\Theta_\omega H^2$, $\Theta_\omega \bar{H}^2$, $H^2 \ominus (H^2 \cap \Theta_\omega H^2)$ and $H^2 \cap \Theta_\omega \bar{H}^2$ 
are defined even if $\Theta_\omega(z)$ is not necessarily a meromorphic inner function in $\C^+$  (see \eqref{103} and \eqref{104}), 
and it allows us to study these spaces for the range $0 < \omega < 1/2$ without assuming RH. 
(Note that $\Theta H^2 \not\subset H^2$ in general if $\Theta$ is not necessary a inner function in $\C^+$.) 
To make a further discussion, we use Fourier analysis. 
%
%
\subsection{An operator related to $K(\Theta_\omega)$} 
%
%
As usual we identify $H^2$ and $\bar{H}^2$ with subspaces of $L^2(\R)=L^2((-\infty,\infty),du)$ 
via nontangential boundary values on the real line such that $L^2(\R)=H^2 \oplus \bar{H}^2$. 
Then the shifted Fourier transform  
\begin{equation*}
\aligned
{\mathsf F}_{1/2}:L^2((0,\infty),dx) \to L^2(\R): \quad 
({\mathsf F}_{1/2} f)(z) 
& = \int_{0}^{\infty} f(x) \, x^{\frac{1}{2}+iz} \, \frac{dx}{x},  \\
{\mathsf F}_{1/2}^{-1}: L^2(\R) \to L^2((0,\infty),dx): \quad 
({\mathsf F}_{1/2}^{-1} g)(z) 
& = \frac{1}{2\pi}\int_{-\infty}^{\infty} g(u) \, x^{-\frac{1}{2}-iu} \, du
\endaligned
\end{equation*}
provides an isometry of $L^2$-spaces up to a constant 
such that $H^2 = {\mathsf F}_{1/2}L^2((1,\infty),dx)$ and $\bar{H}^2 = {\mathsf F}_{1/2}L^2((0,1),dx)$ by the Paley-Wiener theorem. 

Fourier analysis on $K(\Theta_\omega)$ and $\Theta_\omega H^2$ enables us to state equivalent or sufficient conditions 
that $\Theta_\omega(z)$ is a meromorphic inner function in $\C^+$ (Theorem \ref{thm_2}). 

On the other hand, condition \eqref{106} allows us to define the Hankel type operator
\begin{equation*}
({\mathsf H}_\omega^\ast f)(x) = \int_{0}^{\infty} h_\omega^\ast(xy) \, f(y) \, dy 
\end{equation*}
on $L^2((0,\infty),dx)$ endowed with the kernel given by 
\begin{equation}\label{109}
h_\omega^\ast(x) = \frac{1}{2\pi} \int_{-\infty}^{\infty}  \Theta_\omega(u) \, x^{-\frac{1}{2}-iu} \, du. 
\end{equation}
Of course the definition of ${\mathsf H}_\omega^\ast$ has only a formal sense 
because of the problem of the convergence of integral in \eqref{109}. 
However $h_\omega^\ast(x)$ is going to be identified with the function $h_\omega(x)$ in Section 2,  
and then ${\mathsf H}_\omega^\ast$ is going to be justified  
as the operator ${\mathsf H}_\omega$ obtained by replacing the kernel $h_\omega^\ast(x)$ by $h_\omega(x)$.  
Moreover the operator ${\mathsf H}_\omega$ is extended to an isometry from $L^2((0,\infty),dx)$ to $L^2((0,\infty),dx)$
for $\omega \geq 1/2$ unconditionally, and for $0<\omega<1/2$ under RH (see Lemma \ref{lem_401}). 

As developed in Burnol~\cite{B1} 
(and his other related works ~\cite{MR2096473, MR2267058, MR2310951}), 
the Hankel type operator ${\mathsf H}_\omega$ and its kernel $h_\omega(xy)$ is quite useful 
to study a structure of subspaces of ${\mathsf F}_{1/2}^{-1}K(\Theta_\omega)$ 
corresponding to de Branges subspaces of $B(E^\omega) \simeq K(\Theta_\omega)$. 
By applying Burnol's theory to ${\mathsf H}_\omega$ and $h_\omega(x)$, 
we derive a canonical system of $B(E^\omega)$ under the restriction $\omega>1$ (Theorem \ref{thm_4} and studying in Section 4).  
Recall that the structure of subspaces of a de Branges space is controlled by its canonical system. 

%
%
\subsection{Summary of issues} 
%
%
Briefly, we have two issues. 
The first is to state a (nice) criterion for the innerness of $\Theta_\omega(z)$. 
It is directly related to the zero-free region of $\zeta(s)$ (Proposition \ref{intro_2}). 
The second is to describe the Hamiltonian $H_\omega(a)$ of the canonical system of $B(E^\omega)$ explicitly 
by assuming that $\Theta_\omega(z)$ is a meromorphic inner function in $\C^+$ if $0<\omega<1/2$. 
If it is done, we can state that $\Theta_\omega(z)$ is a meromorphic inner function in $\C^+$ 
if and only if $(A^\omega,B^\omega)=(A_c,B_c)$ for the solution $(A_a,B_a)$ of the canonical system for $H_\omega(a)$ on $a \in (0,c]$ 
satisfying $\lim_{a \to 0^+} (A_a,B_a)=(E^\omega(0),0)$. 
This description explains the innerness of $\Theta_\omega(z)$ as a consequence of properties of $H_\omega(a)$, 
and it provides a criterion for a zero-free region of $\zeta(s)$ 
in terms of a family of canonical systems attached to $\{H_\omega(a)\}_{\omega>\omega_0}$ via Proposition \ref{intro_2}. 

However the second problem is not trivial even if $\omega \geq 1/2$. 
In this paper, we deal with the case $\omega>1$ for the second problem as the first attempt.  
%
%
\subsection{Organization of the paper} 
%
%
The paper is organized as follows. 
In Section 2, we state main results Theorem \ref{thm_2} and Theorem \ref{thm_4} 
after a small preparation of notation. 
The first one is equivalent conditions on the Hermite-Biehler condition \eqref{108} 
in terms of the function $h_\omega(x)$ for fixed $\omega>0$. This is proved in Section 3. 
The second one is a result on the canonical system of $B(E^\omega) \simeq K(\Theta_\omega)$
under the restriction $\omega>1$. 
It is proved in Section 4 together with related studies and auxiliary results. 
In addition, we present more sufficient or equivalent conditions that $\Theta_\omega(z)$ 
is a meromorphic inner function in $\C^+$ 
in Appendix A (Theorem \ref{thm_3}). 
\medskip

Here we mention that this paper, particularly Appendix A, is a sequel to \cite{Su}, though it is independent and can be read separately. 
The operator ${\mathsf H}_\omega^\ast$ of Section 1.5 is also justified as the Watson transform: 
\begin{equation*}
({\mathsf H}_\omega^{\ast\ast} f)(x) = \frac{d}{dx}\int_{0}^{\infty} h_\omega^{\ast\ast}(xy) \, f(y) \, \frac{dy}{y}, 
\qquad 
h_\omega^{\ast\ast}(x) = \frac{1}{2\pi} \int_{-\infty}^{\infty} \frac{\Theta_\omega(u)}{\frac{1}{2}-iu} \, x^{\frac{1}{2}-iu} \, du, 
\end{equation*}
which gives a linear involution on $L^2((0,\infty),dx)$ under \eqref{105} (only for real $z$) and \eqref{106} 
(see Titchmarsh \cite[\S8.5]{MR942661},  Bochner--Chandrasekharan~\cite[Chap.V, \S2]{MR0031582}). 
Moreover, ${\mathsf H}_\omega^{\ast\ast} = {\mathsf H}_\omega$ if $\Theta_\omega(z)$ is inner in $\C^+$. 
The Watson transform has the advantage that $h_\omega^{\ast\ast}(x)$ always exists in $L^2$-sense by \eqref{106}, 
and belongs to $L^2((0,\infty),dx)$. 
While the modified function 
\[
h_\omega^{\langle 1 \rangle}(x) = \frac{1}{2\pi} \int_{-\infty}^{\infty} \frac{\Theta_\omega(u)}{-iu} \, x^{\frac{1}{2}-iu} \, du
\] 
does not belong to $L^2((0,\infty),dx)$ although it is justified as a function (Appendix A).  
However it is also useful to study the space $K(\Theta_\omega)$ and the operator ${\mathsf H}_\omega$ 
because of formula \eqref{rp_1} for the reproducing kernel. 
In fact, several sufficient or equivalent conditions that 
$\Theta_\omega(z)$ is inner in $\C^+$ are stated in terms of $h_\omega^{\langle 1 \rangle}(x)$ 
(Theorem \ref{thm_3}) as well as Theorem \ref{thm_2}. 
Moreover, if $\Theta_\omega(z)$ is inner in $\C^+$,  we obtain 
\[
({\mathsf H}_\omega f)(x)
 = \int_{0}^{\infty} h_\omega(xy) \, f(y) \, dy 
 = \sqrt{x} \frac{d}{dx}\sqrt{x} \int_{0}^{\infty}  h_\omega^{\langle 1 \rangle}(xy) \, f(y) \, dy
\]
for compactly supported smooth functions $f$, and it is extended to $L^2((0,\infty),dx)$ (Theorem \ref{thm_5}). 
The function $h_\omega^{\langle 1 \rangle}(x)$ was introduced and studied in \cite{Su} 
for more general $L$-functions, 
but a relation with spaces $B(E^\omega) \simeq K(\Theta_\omega)$ and operators ${\mathsf H}_\omega$ 
were not mentioned there. In this sense, this paper is a sequel to \cite{Su}. 

%
%
\subsection{De Branges' works} 
%
%
Finally, we comment on de Branges' works on $B(E^\omega)$. 
The de Branges space $B(E^\omega)$ was considered first  
for the special value $\omega=1/2$ in de Branges~\cite[pp.10--14]{MR838785}, 
motivating to generalize the Lax-Phillips scattering theory to the Laplace-Beltrami operator, 
and for $\omega \geq 1/2$ in the subsequent paper \cite[pp.205--210]{MR1165869}. 
(Precisely, we need to replace $\zeta(s)$ 
by a Dirichlet $L$-function $L(s,\chi)$ attached to an even primitive Dirichlet character $\chi$ in \cite{MR838785}). 
De Branges gave a sufficient condition on $B(E)$ attached to general entire function $E$ 
satisfying \eqref{101} such that the zeros of $E(z)$ lie on the line  $\Im(z) = -1/2$,  
which implies the (generalized) RH when $E=E^{\omega}$ for $\omega=1/2$. 
However Conrey and Li ~\cite{MR1792282} showed that $B(E^{\omega})$ $(\omega=1/2)$ does not satisfy de Branges' condition. 
For $\omega \geq 1/2$ de Branges studied the space $B(E^\omega)$ by associating it with the weighted Hardy space ${\mathcal F}(W)=WH^2$ 
for the  weight function $W(z)=\frac{1}{4}(s+\omega)(s+\omega -1)\Gamma(\frac{s+\omega}{2})$ 
with $s=\frac{1}{2}-iz$, 
but we omit the details of this topic (see \cite{MR1165869}, and also \cite{MR1792282}).  

In any case, de Branges directly related RH with a condition on $B(E^\omega)$ for fixed $\omega \geq 1/2$. 
On the other hand, we reduced RH to the family of spaces $\{B(E^\omega)\}_{\omega>0}$, 
and study each space $B(E^\omega)$ depending on a level of difficulty, 
which is determined by the value $\omega$. 
This is a major difference with de Branges' approach and ours. 
\medskip

\noindent
{\bf Acknowlegements}~
I heartily thank the reviewer for many detailed and helpful comments and corrections. 
In particular, the readability of the paper was quite improved, 
and an error of the proof of Lemma \ref{lem_403} in the initial version was corrected by comments of the reviewer. 
This work was supported by KAKENHI (Grant-in-Aid for Young Scientists (B)) No. 21740004. 
%
%
\section{Main Results}
%
%
Our first result is to derive an expression for $\Theta_\omega(z)$ as a Mellin transform 
of a function $h_\omega(x)$ defined for $0 < x < \infty$, 
which is valid for all real $\omega > 0$ (Proposition \ref{prop_1}). 
To define this function we first define the numbers 
\begin{equation} \label{203}
c_\omega(n) := n^{\omega}\sum_{d|n} \frac{\mu(d)}{d^{2\omega}} = n^{\omega}\prod_{p|n}\left(1-\frac{1}{p^{2\omega}}\right) 
\end{equation}
for natural numbers $n$, where $\mu(n)$ is the M\"obius function, that is,   
$\mu(n)=0$ if $n$ is not a square free number, and $\mu(n)=(-1)^k$ if $n$ is a product of $k$ distinct primes. 
The arithmetic function $n \mapsto J_{2\omega}(n):=n^{\omega}c_\omega(n)$ is called Jordan's totient function,  
which gives Euler's totient function $\varphi(n)$ for $\omega=1/2$. 

Next we introduce a function $g_\omega(x)$ defined on $(0,\infty)$ by
%
\begin{equation*}
\aligned
g_\omega(x)
 = \frac{2\pi^\omega}{\Gamma(\omega)}\left( x^{2-\omega} (1-x^2)^{\omega-1}
- \omega x^{\omega-1} 
\int_{x^2}^{1} t^{\frac{1}{2} - \omega}(1-t)^{\omega-1} \, dt
\right)
\endaligned
\end{equation*}
for $0<x< 1$, and $g_\omega(x)=0$ for $x>1$. 
It is continuous on $(0,1)$ and $(1,\infty)$. 
The behavior of $g_\omega$ near $x=1$ and $x=0$ is as follows. 
We have 
\begin{equation}\label{201}
g_\omega(x) 
=\frac{(2\pi)^\omega}{\Gamma(\omega)} 
(1-x)^{\omega-1}+o(1)  \quad \text{as} \quad x \to 1^-.  
\end{equation}
Therefore $g_\omega$ is continuous at $x=1$ if and only if $\omega>1$, 
and it is $L^1$ (resp. $L^2$) at $x=1$ if $\omega>0$ (resp. $\omega>1/2$). 
On the other hand, we have
\begin{equation*}
g_\omega(x) 
=
\begin{cases}
-4 \,\omega \pi^{\omega - 1/2} \Gamma(3/2 - \omega) \, x^{\omega-1} +o(1), & 0<\omega< 3/2, \\
~4 \,\pi \sqrt{x} \, (3 \log x + 4 - 3 \log 2) + o(1), & \omega=3/2, \\
-6 \pi^\omega (2 \omega - 3)^{-1} \Gamma(\omega)^{-1} \, x^{2-\omega} + o(1), & \omega > 3/2,
\end{cases} \quad \text{as} \quad x \to 0^+.
\end{equation*}
Thus $g_\omega$ is $L^1$ (resp. $L^2$) at $x=0$ if $0<\omega<3$ (resp. $1/2<\omega<5/2$). 
The size of the singularity at $x = 1$ will be important in the sequel 
because it influences the type of operators ${\mathsf H}_{\omega,a}$ below, 
while there is no need to be careful about the behavior around $x=0$ in this paper. 

Finally, we define the real-valued function $h_\omega$ on $(0,\infty)$ by 
\begin{equation} \label{204}
h_\omega(x) = 
\displaystyle{\frac{1}{x} \sum_{n=1}^{\lfloor x \rfloor} c_\omega(n) \, g_\omega\left(\frac{n}{x}\right)}
\end{equation}
for $x>1$, and $h_\omega(x)=0$ for $0<x<1$. 
The value $h_\omega(1)$ may be undefined, 
since $c_\omega(1)=1$ and $g_\omega(1^-) = +\infty$ for $0<\omega < 1$ by \eqref{201}.
By definition, $h_\omega$ has a support in $[1,\infty)$, 
and is $L^1$ (resp. $L^2$) on every finite interval $[1,b]$ if $\omega>0$ (resp. $\omega>1/2$).  
On the other hand, the behavior of $h_\omega$ at $x=+\infty$ is not obvious from its definition 
(see \eqref{est_h} below). Now the first result is stated as follows. 
\begin{proposition} \label{prop_1} For $\omega>0$ and $\Im(z)>1/2+\omega$, we have 
\begin{equation} \label{206}
\int_{0}^{\infty} h_{\omega}(x) \, x^{\frac{1}{2}+iz} \, \frac{dx}{x} 
 = \Theta_\omega(z),
\end{equation}
where the integral converges absolutely. 
\end{proposition}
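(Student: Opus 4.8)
The plan is to pass to the Mellin variable $s = \tfrac{1}{2} - iz$, under which $x^{\frac{1}{2}+iz}\,\tfrac{dx}{x} = x^{-s}\,dx$ and the hypothesis $\Im(z) > \tfrac{1}{2}+\omega$ becomes $\Re(s) > 1 + \omega$. Since $\Theta_\omega(z) = \xi(s-\omega)/\xi(s+\omega)$ with this choice of $s$, the target identity reads $\int_0^\infty h_\omega(x)\,x^{-s}\,dx = \xi(s-\omega)/\xi(s+\omega)$. Because $h_\omega$ is supported on $[1,\infty)$, only the behaviour at $x = +\infty$ is at issue for convergence, and I expect the computation to factor the Mellin transform of $h_\omega$ into an arithmetic part (a Dirichlet series in the $c_\omega(n)$) and an archimedean part (the Mellin transform of $g_\omega$), which will match exactly the zeta-quotient and the gamma-quotient appearing in $\xi(s-\omega)/\xi(s+\omega)$.

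First I would insert the definition $h_\omega(x) = \tfrac{1}{x}\sum_{n \leq x} c_\omega(n)\,g_\omega(n/x)$ and interchange summation and integration. The justification is clean: the coefficients satisfy $0 \leq c_\omega(n) = n^\omega\prod_{p|n}(1 - p^{-2\omega}) \leq n^\omega$, while $g_\omega$ need not have a fixed sign, so I would run the estimate with $|g_\omega|$ and invoke Tonelli on the resulting nonnegative integrand; since $g_\omega$ is $L^1$ on $(0,1)$ for $\omega > 0$, each term is finite. For fixed $n$ the variable $x$ ranges over $[n,\infty)$ (as $g_\omega(n/x) = 0$ for $x < n$), and the substitution $t = n/x$ turns $\int_n^\infty g_\omega(n/x)\,x^{-s-1}\,dx$ into $n^{-s}\int_0^1 g_\omega(t)\,t^{s-1}\,dt$. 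This yields the factorization
\[
\int_0^\infty h_\omega(x)\,x^{-s}\,dx
 = \Big(\sum_{n=1}^\infty \frac{c_\omega(n)}{n^s}\Big)\cdot\Big(\int_0^1 g_\omega(t)\,t^{s-1}\,dt\Big),
\]
and the same computation carried out with $|g_\omega|$ shows both factors, hence the original integral, converge absolutely for $\Re(s) > 1 + \omega$ (the Dirichlet series is dominated by $\zeta(\Re(s)-\omega)$, and the $g_\omega$-integral converges at both endpoints in this range).

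For the arithmetic factor I would write $c_\omega(n) = n^\omega\sum_{d|n}\mu(d)\,d^{-2\omega}$, set $n = dm$, and separate the sums to obtain $\sum_n c_\omega(n)\,n^{-s} = \big(\sum_d \mu(d)\,d^{-(s+\omega)}\big)\big(\sum_m m^{-(s-\omega)}\big) = \zeta(s-\omega)/\zeta(s+\omega)$, valid for $\Re(s) > 1 + \omega$. For the archimedean factor I would compute $\int_0^1 g_\omega(t)\,t^{s-1}\,dt$ from the explicit formula for $g_\omega$ as a sum of two pieces. The first piece reduces, after the substitution $u = t^2$, to a Beta integral and gives $\pi^\omega\,\Gamma(\tfrac{s-\omega+2}{2})/\Gamma(\tfrac{s+\omega+2}{2})$. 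The second piece carries the nested integral $\int_{t^2}^1$; here I would interchange the order of integration over the region $\{0 < t < 1,\ t^2 < \tau < 1\} = \{0 < \tau < 1,\ 0 < t < \sqrt{\tau}\}$, evaluate the now-inner $t$-integral explicitly (legitimate since $\Re(s) > 1 - \omega$), and again land on the same Beta integral, producing $-\,2\pi^\omega\omega\,(s+\omega-1)^{-1}\,\Gamma(\tfrac{s-\omega+2}{2})/\Gamma(\tfrac{s+\omega+2}{2})$.

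Combining the two pieces, the common factor $\Gamma(\tfrac{s-\omega+2}{2})/\Gamma(\tfrac{s+\omega+2}{2})$ factors out and the bracket collapses via $1 - \tfrac{2\omega}{s+\omega-1} = \tfrac{s-\omega-1}{s+\omega-1}$; applying the recursion $\Gamma(\tfrac{w+2}{2}) = \tfrac{w}{2}\Gamma(\tfrac{w}{2})$ to both gamma factors then gives
\[
\int_0^1 g_\omega(t)\,t^{s-1}\,dt
 = \pi^\omega\,\frac{(s-\omega)(s-\omega-1)\,\Gamma(\tfrac{s-\omega}{2})}{(s+\omega)(s+\omega-1)\,\Gamma(\tfrac{s+\omega}{2})}.
\]
Finally I would multiply this by $\zeta(s-\omega)/\zeta(s+\omega)$ and recognize, using $\xi(w) = \tfrac{1}{2}w(w-1)\pi^{-w/2}\Gamma(w/2)\zeta(w)$ (the factors $\pi^{\mp w/2}$ combining to $\pi^\omega$), that the product is precisely $\xi(s-\omega)/\xi(s+\omega) = \Theta_\omega(z)$, which completes the proof. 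The only genuinely delicate point is the Tonelli/Fubini justification together with the interchange of the order of integration in the second piece of $g_\omega$; both are controlled by the positivity $c_\omega(n) \geq 0$ and the stated $L^1$-integrability of $g_\omega$, so I expect no real obstacle beyond bookkeeping.
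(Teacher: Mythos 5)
Your proposal is correct and follows essentially the same route as the paper: both factor the Mellin transform of $h_\omega$ (via Tonelli and the substitution $t=n/x$) into the Dirichlet series $\sum_n c_\omega(n)n^{-s}=\zeta(s-\omega)/\zeta(s+\omega)$ times the Mellin transform of $g_\omega$, which is shown to equal the gamma-factor quotient $\gamma(s-\omega)/\gamma(s+\omega)$. The only difference is one of execution: the paper obtains the archimedean identity by citing a tabulated Mellin transform of Oberhettinger together with the Mellin convolution theorem (Titchmarsh, Theorem~44), whereas you verify the same identity by computing the two Beta integrals directly, with a Fubini interchange for the nested piece -- both are valid and your version is self-contained.
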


We introduce more notation in order to sate the main results mentioned in the introduction. 
By \eqref{106}, $F(z) \mapsto \Theta_\omega(z) F(z)$ defines a map  $L^2(\R) \to L^2(\R)$. 
We denote it also by $\Theta_\omega$ if no confusion arises, 
and define 
\begin{equation*}
\widehat{\Theta}_\omega={\mathsf F}_{1/2}^{-1}\Theta_\omega{\mathsf F}_{1/2}:L^2((0,\infty),dx) ~ \to~ L^2((0,\infty),dx).
\end{equation*} 
If $\Theta_\omega(z)$ is an inner function in $\C^+$, 
images $\Theta_\omega H^2$ and $\widehat{\Theta}_\omega L^2((1,\infty),ds)$ are subspaces of $H^2$ and $L^2((1,\infty),dx)$, respectively. 
Obviously the map $\widehat{\Theta}_{\omega}$ is related to the function $h_\omega$ by \eqref{206}. 
In fact the innerness of $\Theta_\omega(z)$ is described in terms of $h_{\omega}$ as follows. 
\begin{theorem} \label{thm_2}
Let $\omega>0$. The function $\Theta_\omega(z)$ is a meromorphic inner function in $\C^+$ 
if and only if one of the following conditions holds: 
\begin{enumerate}
\item $ \widehat{\Theta}_\omega f = h_\omega \ast f $ for every $f \in L^2((1,\infty),dx)$, 
where 
\begin{equation*}
(h_\omega \ast f)(x) = \int_{0}^{\infty} h_\omega(x/y) f(y) \frac{dy}{y}.
\end{equation*}
\item $\widehat{\Theta}_\omega f$ vanishes on $(0,1)$ for every $f \in L^2((1,\infty),dx)$. 
\item $h_\omega \ast f$ belongs to $L^2((0,\infty),dx)$ for every $f \in L^2((1,\infty),dx)$. 
\end{enumerate}
\end{theorem}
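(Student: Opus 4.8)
\subsection*{Proof proposal}

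The plan is to use the isometry $\mathsf{F}_{1/2}$ to transport the Hardy-space characterisation of innerness into the multiplicative picture on $L^2((0,\infty),dx)$, and then to realise $\widehat{\Theta}_\omega$ as multiplicative convolution by $h_\omega$ up to a correction term coming from the poles of $\Theta_\omega$ in $\C^+$. Throughout I identify innerness with the inclusion $\Theta_\omega H^2 \subseteq H^2$: since $|\Theta_\omega|=1$ on $\R$ by \eqref{106}, multiplication by $\Theta_\omega$ is unitary on $L^2(\R)$, so a unimodular boundary function mapping $H^2$ into $H^2$ is a multiplier of $H^2$, hence lies in $H^\infty(\C^+)$ and is inner (the converse being immediate, and meromorphy being given). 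Applying $\mathsf{F}_{1/2}^{-1}$ and using $H^2=\mathsf{F}_{1/2}L^2((1,\infty),dx)$, this inclusion reads $\widehat{\Theta}_\omega L^2((1,\infty),dx)\subseteq L^2((1,\infty),dx)$, i.e. $\widehat{\Theta}_\omega f$ vanishes on $(0,1)$ for every $f\in L^2((1,\infty),dx)$. Thus the equivalence of innerness with condition (2) is essentially a restatement.

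The analytic core is a master identity. Fix $f$ in the dense class $C_c^\infty((1,\infty))$, with $\operatorname{supp} f\subseteq[a,b]$, $a>1$; then $\widehat f:=\mathsf{F}_{1/2}f$ is entire, decays in every horizontal strip by smoothness, and tends to $0$ as $\Im z\to+\infty$ because $a>1$. Writing $(\widehat{\Theta}_\omega f)(x)=\frac{1}{2\pi}\int_{\R}\Theta_\omega(u)\widehat f(u)\,x^{-1/2-iu}\,du$ and shifting the contour up to $\Im u=\sigma$ with $\sigma>1/2+\omega$, I cross exactly the poles $u_j$ of $\Theta_\omega$ in $\C^+$. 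On the line $\Im u=\sigma$, Proposition \ref{prop_1} furnishes the absolutely convergent representation $\Theta_\omega(u)=\int_0^\infty h_\omega(t)\,t^{1/2+iu}\,\frac{dt}{t}$, so Fubini collapses the $u$-integral to a point mass at $t=x/y$ and identifies the shifted integral with $(h_\omega\ast f)(x)$. The residue theorem then gives
\begin{equation*}
(h_\omega\ast f)(x)=(\widehat{\Theta}_\omega f)(x)-i\sum_{j}\rho_j\,\widehat f(u_j)\,x^{-1/2-iu_j},
\end{equation*}
where $\rho_j=\operatorname{Res}_{u_j}\Theta_\omega$ and the sum runs over the poles of $\Theta_\omega$ in $\C^+$.

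From the master identity the remaining implications are short. Since $h_\omega$ and $f$ are supported in $[1,\infty)$, the convolution $(h_\omega\ast f)(x)=\int_1^{x} h_\omega(x/y)f(y)\,\frac{dy}{y}$ vanishes for $x\leq 1$; hence (1) forces $\widehat{\Theta}_\omega f$ to vanish on $(0,1)$, giving (1)$\Rightarrow$(2) and, by the first paragraph, (1)$\Rightarrow$ innerness. If $\Theta_\omega$ is inner it has no poles in $\C^+$, the correction term vanishes, and the identity reads $\widehat{\Theta}_\omega f=h_\omega\ast f$ on the dense class; as $\widehat{\Theta}_\omega$ is unitary this extends to all $f\in L^2((1,\infty),dx)$, giving innerness$\Rightarrow$(1). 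Because $\widehat{\Theta}_\omega f\in L^2$ always, (1)$\Rightarrow$(3) is immediate. For (3)$\Rightarrow$(1): each correction term $x^{-1/2-iu_j}$ has modulus $x^{-1/2+\Im u_j}$ with $\Im u_j>0$, hence is not square-integrable at $+\infty$, and distinct exponents are linearly independent; choosing $f$ with $\widehat f(u_j)\neq0$ shows that a nonzero correction would force $h_\omega\ast f\notin L^2$, contradicting (3). So (3) implies $\Theta_\omega$ has no poles in $\C^+$, the correction vanishes, and (1) follows. This closes the cycle (1)$\Leftrightarrow$(2)$\Leftrightarrow$ innerness and (1)$\Leftrightarrow$(3).

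The main obstacle is the rigorous justification of the master identity: one must establish enough decay of $\widehat f$ to annihilate the horizontal sides of the shifting rectangle (combining smoothness of $f$ with standard convexity/Stirling bounds for $\xi$ controlling $\Theta_\omega$ in vertical strips), verify that the only singularities crossed are simple poles with the stated residues, and apply Fubini legitimately—this last step resting on the absolute convergence in \eqref{206}, which holds only on lines $\Im u=\sigma>1/2+\omega$ and is precisely why the contour must be pushed high before $h_\omega$ can enter. A secondary technical point is the passage from $C_c^\infty((1,\infty))$ to arbitrary $f\in L^2((1,\infty),dx)$ in (1) and (3), where one must check that the convolution integral still converges (for almost every $x$, or in the $L^2$ sense) in the regime $0<\omega\leq 1/2$ where $h_\omega$ is only locally $L^1$.
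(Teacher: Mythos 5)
Your reduction of innerness to the inclusion $\Theta_\omega H^2\subseteq H^2$, your treatment of condition (2), and the implications (1)$\Rightarrow$(2), (1)$\Rightarrow$(3) are sound and close to the paper (which proves the same inclusion criterion as Lemma \ref{lem_301}, by Phragm\'en--Lindel\"of rather than by the multiplier characterisation of $H^\infty$). The problem is the ``master identity'' obtained by shifting the contour from $\R$ up to $\Im u=\sigma>1/2+\omega$ and collecting residues: it carries the whole weight of ``(3) $\Rightarrow$ inner'' in your argument, and it is exactly in the nontrivial range $0<\omega<1/2$ --- the only range in which the theorem is not already settled unconditionally --- that it cannot be justified as stated.

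Two concrete obstructions. First, on the vertical sides $\Re u=\pm T$ of the shifting rectangle the denominator $\xi(\tfrac12+\omega+v-iu)$ has real part of its argument between $\tfrac12+\omega$ and $1$ for $0\le v<\tfrac12-\omega$, i.e.\ inside the critical strip; without RH the best lower bounds for $|\zeta|$ along suitably chosen ordinates are only of the shape $T^{-C\log\log T}$, so $|\Theta_\omega|$ on those sides is super-polynomial, while $\widehat f$ for $f\in C_c^\infty$ decays faster than any polynomial but not exponentially. The horizontal sides therefore do not obviously vanish, and the identity is not established precisely when $\Theta_\omega$ fails to be inner. Second, the poles crossed are the zeros of $\xi(s)$ with $\Re(s)>\tfrac12+\omega$; for $0<\omega<\tfrac12$ this set is not known to be finite (nor the zeros simple), so your correction term is a priori an infinite series of possibly higher-order residues, and the ``one non-$L^2$ exponential cannot be cancelled'' argument for (3)$\Rightarrow$(1) no longer applies. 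The paper avoids both issues: for ``inner $\Rightarrow$ (1)'' it compares the two horizontal-line integrals only after $\Theta_\omega F$ is known to lie in $H^2$, where independence of the height is a soft $L^2$ fact requiring no residues and no vertical-side estimates; for ``(3) $\Rightarrow$ inner'' it argues via support: $h_\omega\ast f\in L^2((1,\infty))$ forces its Mellin transform into $H^2$, that transform agrees with $\Theta_\omega F$ for $\Im z>1/2+\omega$ by the $L^2$ convolution theorem, and analytic continuation together with Lemma \ref{lem_301} concludes. You should replace the residue computation by an argument of this type; the rest of your proposal then goes through.
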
 
Suppose that $\Theta_\omega(z)$ is an inner function in $\C^+$. 
Then 
\begin{equation} \label{208}
({\mathsf H}_\omega f)(x) = \int_{0}^{\infty} h_\omega(xy) \, f(y) \, dy 
\end{equation}
defines a bounded operator from $L^2((0,\infty),dx)$ to $L^2((0,\infty),dx)$ 
(Lemma \ref{lem_401}). 
For $a >0$, we denote by ${\mathsf P}_a$ the orthogonal projection from $L^2((0,\infty),dx)$ to $L^2((0,a),dx)$, 
and define 
\begin{equation}\label{209}
{\mathsf H}_{\omega,a} := {\mathsf P}_a {\mathsf H}_\omega {\mathsf P}_a: 
L^2((0,a),dx) ~\to~ L^2((0,a),dx).
\end{equation}
A study of ${\mathsf H}_{\omega}$ and ${\mathsf H}_{\omega,a}$ yields a canonical system as follows:
\begin{theorem} \label{thm_4}
Suppose that $\omega>1$. $($It implies automatically that $\Theta_\omega$ is inner in $\C^+$.$)$ 
Then the operator ${\mathsf H}_{\omega,a}$ is a Hilbert-Schmidt type self-adjoint operator with a continuous kernel for every $a>1$, 
and ${\mathsf H}_{\omega,a}=0$ for $0<a\leq 1$.  
Moreover $1\pm {\mathsf H}_{\omega,a}$ are invertible for every $a>0$. Define 
\begin{equation*}
m(a) := m_\omega(a) =
\frac{\det(1+{\mathsf H}_{\omega,a})}{\det(1-{\mathsf H}_{\omega,a})} 
\end{equation*} 
by using Fredholm determinants. 
Then $m(a)$ is real-valued continuous function on $(0,\infty)$, and the canonical system 
\begin{equation*}
- a \frac{\partial }{\partial a} \begin{bmatrix} X_a(z) \\ Y_a(z) \end{bmatrix}  
= z \, \begin{bmatrix} 0 & -1 \\ 1 & 0 \end{bmatrix} 
 \begin{bmatrix} m(a)^{-2} & 0 \\ 0 & m(a)^{2} \end{bmatrix} 
 \begin{bmatrix} X_a(z) \\ Y_a(z) \end{bmatrix} \quad (0< a <\infty)
\end{equation*}
has the explicit solution $(X_a,Y_a)=(A_a,B_a)$ given by \eqref{416} in Section $4$ such that 
\begin{enumerate}
\item $A_a(z)$ and $B_a(z)$ are real entire functions as a function of $z$ for every fixed $a>0$,
\item $A_a(-z)=A_a(z)$ and $B_a(-z)=-B_a(z)$ as a function of $z$ for every fixed $a>0$, 
\item $(A_1(z),B_1(z))=(A^\omega(z),B^\omega(z))$ and 
\[
\displaystyle{\lim_{a \to 1+}(A_a(z),B_a(z))=\lim_{a \to 1-}(A_a(z),B_a(z))=(A^\omega(z),B^\omega(z))}
\] 
hold uniformly on every compact subset in $\C$, 
where $A^\omega(z)$ and $B^\omega(z)$ are real entire functions defined in \eqref{AB}. 
\end{enumerate} 
Furthermore, the canonical system can be transformed into the pair of Sch\"odinger equations 
\begin{equation*}
\left( -a\frac{\partial}{\partial a}a\frac{\partial}{\partial a} + V^\pm(a) \right) \psi^\pm (a,z) 
= z^2 \psi^\pm(a,z)
\end{equation*}  
with the pair of potentials 
\begin{equation*}
V^{\pm}(a) = \left( \frac{1}{m(a)}a\frac{\partial }{\partial a} m(a) \right)^2 
\mp a \frac{\partial}{\partial a}\left( \frac{1}{m(a)}a\frac{\partial }{\partial a} m(a) \right)
\end{equation*}
by taking $\psi^+(a,z) = m^{-1}(a)A_a(z)$ and $\psi^-(a,z)=m(a)B_a(z)$. 
\end{theorem}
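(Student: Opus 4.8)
The plan is to reduce the whole theorem to two facts about $\mathsf{H}_\omega$: that it is a self-adjoint unitary involution, and that its compressions $\mathsf{H}_{\omega,a}$ have no $\pm1$ in their point spectrum; everything else is then read off from the $a$-dependence of the Fredholm determinants. First I would establish the operator backbone. Transporting $\mathsf{H}_\omega$ through $\mathsf{F}_{1/2}$ and using \eqref{206}, a direct Mellin computation shows that $\mathsf{H}_\omega$ acts on $L^2(\R)$ as $g(z)\mapsto \Theta_\omega(z)\,g(-z)$. Then \eqref{106} shows it is an isometry (this is Lemma \ref{lem_401}), while \eqref{105} gives $\mathsf{H}_\omega^2=1$, so $\mathsf{H}_\omega$ is a self-adjoint involution. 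For the truncations, self-adjointness of $\mathsf{H}_{\omega,a}$ is immediate from the real symmetric kernel $h_\omega(xy)$; since $h_\omega$ is supported in $[1,\infty)$ and $xy<1$ throughout $(0,a)^2$ when $a\le 1$, we get $\mathsf{H}_{\omega,a}=0$ there; and for $\omega>1$ the expansion \eqref{201} makes $g_\omega$, hence $h_\omega$, continuous, so on the compact square $[0,a]^2$ the kernel is continuous and $\mathsf{H}_{\omega,a}$ is Hilbert--Schmidt. That the operator is in fact trace class, as required for the ordinary Fredholm determinants, follows from a finer analysis of the regularity of $h_\omega$ (this needs care for $1<\omega\le 2$, where $g_\omega'$ is singular at $x=1$).

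I expect the invertibility of $1\pm\mathsf{H}_{\omega,a}$ to be the main obstacle. As a compression of an involution, $\mathsf{H}_{\omega,a}$ has norm $\le 1$; the point is to exclude $\pm1$ from its point spectrum. If $\mathsf{H}_{\omega,a}f=\pm f$ with $f\in L^2((0,a),dx)$, then $\langle \mathsf{H}_\omega f,f\rangle=\pm\|f\|^2=\pm\|\mathsf{H}_\omega f\|\,\|f\|$, so equality in Cauchy--Schwarz forces $\mathsf{H}_\omega f=\pm f$ globally, i.e. $f$ is a genuine $\pm1$-eigenvector of $\mathsf{H}_\omega$ supported in $(0,a)$. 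Since $h_\omega(xy)=0$ for $xy<1$, such an $f$ automatically vanishes on $(0,1/a)$, and iterating this support-shrinking together with the Paley--Wiener description of the eigenfunctions (their Mellin transforms $\tilde f$ are of exponential type controlled by the support and satisfy $\tilde f(z)=\pm\Theta_\omega(z)\tilde f(-z)$) yields a contradiction, forcing $f=0$. This is precisely the delicate step where the order-one growth and the real-zero structure of $\xi$, i.e. of $\Theta_\omega$, must enter, and I would follow Burnol's reciprocity argument here. Granting invertibility, $m(a)$ is well defined; self-adjointness with a real kernel puts the spectrum of $\mathsf{H}_{\omega,a}$ in $(-1,1)$, so both determinants are positive and $m(a)$ is real (indeed positive), while continuity of $m$ follows from continuous dependence of $\mathsf{H}_{\omega,a}$ on $a$ in trace norm combined with continuity of Fredholm determinants away from the zero set of $\det(1-\,\cdot\,)$.

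Next I would build the solution $(A_a,B_a)$. The explicit functions \eqref{416} are defined through the resolvents $(1\pm\mathsf{H}_{\omega,a})^{-1}$ applied to the $z$-dependent kernel functions and normalized by $m(a)$; properties (1) and (2) then follow because the resolvents act on entire, real data, and the symmetries of $\Theta_\omega$ in \eqref{105}--\eqref{107} force $A_a$ even and $B_a$ odd. To verify the canonical system I would differentiate in $a$, using $\partial_a\log\det(1\pm\mathsf{H}_{\omega,a})=\pm\operatorname{tr}\bigl[(1\pm\mathsf{H}_{\omega,a})^{-1}\partial_a\mathsf{H}_{\omega,a}\bigr]$ together with the observation that $\partial_a\mathsf{H}_{\omega,a}$ is the boundary term produced by the continuous kernel at the edge $x=a$; the resulting rank-type contributions generate exactly the coupling with $z$ and the diagonal weights $m(a)^{\mp2}$, matching $-a\,\partial_a[X_a;Y_a]=z\,\left[\begin{smallmatrix}0&-1\\1&0\end{smallmatrix}\right]\mathrm{diag}(m^{-2},m^{2})[X_a;Y_a]$. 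Property (3) is then immediate: $\mathsf{H}_{\omega,1}=0$ gives $m(1)=1$ and collapses \eqref{416} to $(A^\omega,B^\omega)$, while $\mathsf{H}_{\omega,a}\to 0$ in trace norm as $a\to1^{\pm}$ (the support $\{xy\ge1\}\cap(0,a)^2$ has vanishing measure) gives the two-sided continuity.

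Finally, the Schr\"odinger form is the routine specialization already recorded in the introduction. Since the Hamiltonian is diagonal with $m^{-2}\cdot m^{2}=1$, setting $\psi^+=m^{-1}A_a$ and $\psi^-=m\,B_a$ and using $a\partial_a=\partial_{\log a}$ converts the first-order system into the stated second-order equations; with $\alpha'/\alpha=-2\,m^{-1}(a\partial_a)m$ the general expression $\tfrac14(\alpha'/\alpha)^2\pm\tfrac12(\alpha'/\alpha)'$ becomes exactly $V^\pm(a)$. This is a direct computation requiring only $m\in C^1$, which the continuity-and-differentiation step above already supplies.
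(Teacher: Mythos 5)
Your operator backbone is sound and close to the paper's: the identification of $\mathsf{H}_\omega$ with $g(z)\mapsto\Theta_\omega(z)g(-z)$ is Lemma \ref{lem_401}, the vanishing for $a\le 1$ and the Hilbert--Schmidt property for $\omega>1$ are Lemma \ref{lem_402}, and your Cauchy--Schwarz reduction of the invertibility of $1\pm\mathsf{H}_{\omega,a}$ to the nonexistence of a nonzero $f$ supported in $(0,a)$ with $\mathsf{H}_\omega f$ compactly supported is a mild streamlining of Lemma \ref{lem_404}. But at that point you defer the decisive step to ``Burnol's reciprocity argument'' without supplying it. The paper's Lemma \ref{lem_403} is concrete: if $\mathsf{H}_\omega\mathsf{P}_a f$ had compact support, Paley--Wiener would make $\mathsf{F}_{1/2}\mathsf{H}_\omega\mathsf{P}_a f=\Theta_\omega(z)\,\mathsf{F}_{1/2}\mathsf{P}_a f(-z)$ entire of exponential type; since the poles of $\Theta_\omega$ must be cancelled by zeros of $\mathsf{F}_{1/2}\mathsf{P}_af(-z)$, the function factors as $\xi(\tfrac12-\omega-iz)\cdot(\text{entire})$ and therefore has $\gtrsim\frac1\pi T\log T$ zeros in $|z|\le T$, contradicting the $O(T)$ Jensen bound for exponential type. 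You correctly sense that the growth and zero structure of $\xi$ must enter, but the zero-density-versus-exponential-type comparison is the missing idea, not a routine citation.

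The more serious gap is the derivation of the canonical system itself. You propose to get it from $\partial_a\log\det(1\pm\mathsf{H}_{\omega,a})=\pm\operatorname{tr}[(1\pm\mathsf{H}_{\omega,a})^{-1}\partial_a\mathsf{H}_{\omega,a}]$ plus ``boundary terms at the edge $x=a$,'' asserting that the ``rank-type contributions generate exactly the coupling with $z$.'' This does not explain where the spectral parameter comes from: the trace identity only controls $m(a)$ (it is the content of \eqref{419}, used solely to identify $m(a)$ with the determinant ratio). The actual engine is the pair of intertwining relations \eqref{412}--\eqref{413}: one applies $a\frac{\partial}{\partial a}$ and $\delta_x=x\frac{\partial}{\partial x}+\frac12$ to the integral equation \eqref{405}, uses the homogeneity $x\partial_xh_\omega(xy)=y\partial_yh_\omega(xy)$ and integration by parts to show that $a\partial_a\phi_a^{\varepsilon}-\delta_x\phi_a^{-\varepsilon}$ solves the same Fredholm equation, and invokes uniqueness to conclude $(a\partial_a+\tfrac12+\varepsilon\mu(a))\phi_a^{\varepsilon}=\delta_x\phi_a^{-\varepsilon}$; the factor $z$ then appears because $\delta_x$ Mellin-transforms to multiplication by $iz$ in \eqref{413_2}. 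Without this swap between $\phi^+$ and $\phi^-$ there is no off-diagonal structure and no $z$. Relatedly, your claim that properties (1)--(2) ``follow because the resolvents act on entire, real data'' skips the fact that $\tilde A_a,\tilde B_a$ are a priori defined only for $\Im(z)\gg0$ (the $\phi_a^{\pm}$ have polynomial growth); establishing meromorphic continuation, the cancellation of poles by $\xi(\tfrac12+\omega-iz)$, and the functional equations requires the analysis of $\tilde E_a$, $\tilde E_a^{\ast}$ and the identity $\tilde E_a^{\ast}(z)=\Theta_\omega(z)\tilde E_a(-z)$ carried out in Lemmas \ref{lem_407}--\ref{lem_411}. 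Finally, your detour through trace-class operators is unnecessary: the paper works with the classical Fredholm determinant and first minor for continuous kernels (Hadamard's inequality makes the series converge), which is also what justifies the differentiability of $\phi_a^{\varepsilon}$ in $x$ and $a$.
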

%
%
The assumption $\omega>1$ in Theorem \ref{thm_4} is required to obtain 
a continuity of the kernel $h_\omega(xy)$ in the proof in Section 4.3 and 4.4, 
since $h_\omega(x)$ has a singularity at $x=n \in \Z_{>0}$ for $0<\omega \leq 1$. 
However, we observed that singularities at $x = n \in {\Z}>0$ are in $L^2$ exactly for $\omega > 1/2$, 
and are in $L^1$ for all $\omega > 0$. 
This implies that that the function $h_\omega(x)$ on any interval $[x_0,x_1]$ with $0<x_0<x_1<\infty$ 
lies in the same function spaces, and it affects the behavior of associated ${\mathsf H}_{\omega,a}$. 
In fact, ${\mathsf H}_{\omega,a}$ is a Hilbert-Schmidt type self-adjoint operator 
such that $1\pm {\mathsf H}_{\omega,a}$ are invertible for every $a>0$ if $\omega >1/2$ 
(Lemma \ref{lem_402} and \ref{lem_404} below), and is a compact self-adjoint operator for all $\omega>0$. 
In addition, the type of singularities at $x = n \in {\Z}>0$ presumably affects the canonical system 
since it is given by determinants of $1\pm {\mathsf H}_{\omega,a}$ if $\omega>1$. 

On the other hand, as mentioned in Section 1.2 and 1.4, $\Theta_\omega(z)$ is an inner function in $\C^+$ for all $\omega \geq 1/2$ unconditionally, 
and for all $\omega>0$ under RH. 

Therefore, it is plausible that all results of Theorem \ref{thm_4} can be extended to $\omega >1/2$ unconditionally
without essential difficulties. 
Moreover, it is expected that Theorem \ref{thm_4} is generalized to $\omega>0$ 
if we assume RH for $\zeta(s)$. 
See Section 5 for further comments on the validity of Theorem \ref{thm_4}. 
\medskip

Finally, we emphasize that the limit behavior $\lim_{a \to +\infty}(A_a(z),B_a(z))$ is still open even if $\omega>1$. 
The expected result is $\lim_{a \to +\infty}(E^\omega(0),0)=(\xi(\frac{1}{2}+\omega),0)$ 
if we note that $E$ is normalized as $E(0)=1$ in Section 1.1. 
Provably, this limit behavior is related to the arithmetic properties of $\zeta(s)$ in more deep level, 
because we need information for all $\{c_\omega(n)\}_{n \geq 1}$ to understand it 
differ from the situation that we need only finitely many $c_\omega(n)$'s 
to understand ${\mathsf H}_{\omega,a}$ for a finite range of $a$. 
However, we do not touch this problem further in this paper.   

%
%
\section{Proof of Proposition \ref{prop_1} and Theorems \ref{thm_2}}
%
%
\subsection{Proof of Proposition \ref{prop_1}} 
%
%
For convenience, we use variable $s=1/2-iz$. 
Put $\gamma(s)=\frac{1}{2}s(s-1)\pi^{-s/2}\Gamma(s/2)$ so that $\xi(s)=\gamma(s)\zeta(s)$. Then
\begin{equation*}
\frac{\gamma(s-\omega)}{\gamma(s+\omega)}
= \pi^{\omega}
\frac{\Gamma\left(\frac{s-\omega}{2}+1\right)}{\Gamma\left(\frac{s+\omega}{2}+1 \right)} 
 -  \frac{2\omega\pi^{\omega}}{s+\omega-1}\frac{\Gamma\left(\frac{s-\omega}{2} +1\right)}{\Gamma\left(\frac{s+\omega}{2}+1 \right)}.
\end{equation*}
We have 
\begin{equation}\label{301}
\frac{\Gamma(\frac{s-\omega}{2}+1)}{\Gamma(\frac{s+\omega}{2}+1)} 
= \frac{2}{\Gamma(\omega)}\,\int_{0}^{1}  x^{2-\omega} (1-x^2)^{\omega-1}\, x^s \, \frac{dx}{x} 
\end{equation}
for $\Re(s+2)>\omega>0$ by \cite[(5.35) of p.195]{MR0352890}, and 
\begin{equation}\label{302}
\frac{1}{s+\omega-1} = \int_{0}^{1} x^{\omega-1}\, x^{s} \, \frac{dx}{x}
\end{equation}
for $\Re(s)>1-\omega$. Applying Theorem 44 of \cite{MR942661} to \eqref{301} and \eqref{302} 
together with
\begin{equation*}
\aligned
\frac{2}{\Gamma(\omega)}\,\int_{y}^{1}  x^{2-\omega} (1-x^2)^{\omega-1}\, (y/x)^{\omega-1} \, \frac{dx}{x} 
= \frac{y^{\omega-1} }{\Gamma(\omega)}\,\beta\left(y^2, \frac{3}{2} - \omega,  \omega \right), 
\endaligned
\end{equation*}
we obtain 
\begin{equation*}
\int_{0}^{\infty} g_\omega(x) \, x^{s} \, \frac{dx}{x}
=
\int_{0}^{1} g_\omega(x) \, x^{s} \, \frac{dx}{x}
=
\frac{\gamma(s-\omega)}{\gamma(s+\omega)} 
\end{equation*}
for $\Re(s)>{\rm max}(\omega-2,1-\omega)$. 
On the other hand, we have  
\begin{equation*}
\frac{\zeta(s-\omega)}{\zeta(s+\omega)} 
= \sum_{m=1}^{\infty} \frac{\mu(m)m^{-\omega}}{m^s} \sum_{n=1}^{\infty} \frac{n^{\omega}}{n^s} 
= \sum_{n=1}^{\infty} \frac{1}{n^s} \sum_{d|n} \frac{\mu(d)}{d^\omega} \left(\frac{n}{d}\right)^{\omega} = \sum_{n=1}^{\infty} \frac{c_\omega(n)}{n^s}
\end{equation*}
by definition \eqref{203}, where the series converges absolutely for $\Re(s)>1+\omega$. 
By definition \eqref{204}, we have formally 
\begin{equation*}
\aligned
\int_{0}^{\infty} h_\omega(x) \, x^{1-s} \, \frac{dx}{x} 
&= \sum_{n=1}^{\infty} c_\omega(n) \int_{0}^{\infty} x^{-1} g_\omega(n/x) \, x^{1-s} \, \frac{dx}{x} \\
&= \sum_{n=1}^{\infty} \frac{c_\omega(n)}{n^s} \int_{0}^{\infty} g_\omega(n/x) \, (n/x)^{s} \, \frac{dx}{x} 
= \frac{\gamma(s-\omega)}{\gamma(s+\omega)} \frac{\zeta(s-\omega)}{\zeta(s+\omega)},
\endaligned
\end{equation*}
and it is justified by Fubini's theorem for $\Re(s)>1+\omega$. Replacing $s$ by $1/2-iz$, we obtain \eqref{206}. 
%
%
\subsection{Proof of Theorem \ref{thm_2}} \hfill
%
%
It is sufficient to prove the following three assertions: \\
i) condition (1) is equivalent that $\Theta_\omega$ is inner in $\C^+$, 
ii) condition (2) implies that $\Theta_\omega$ is inner in $\C^+$, and 
iii) condition (3) implies that $\Theta_\omega$ is inner in $\C^+$, 
since  (1) implies (2) and (3) by definition of $\widehat{\Theta}_\omega$ and $h_\omega$. 
We prove them after the following lemma. 

\begin{lemma} \label{lem_301} 
Assume that $\Theta_\omega H^2 \subset H^2$. 
Then $\Theta_\omega$ is inner in $\C^+$. 
\end{lemma}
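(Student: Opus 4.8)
The plan is to extract from the hypothesis a bounded holomorphic representative of $\Theta_\omega$ on $\C^+$, prove it is inner, and then identify it with the given meromorphic function. First I would record that, by \eqref{106}, multiplication by the boundary function $\Theta_\omega$ is a \emph{unitary} operator on $L^2(\R)$ (its inverse is multiplication by $\overline{\Theta_\omega}$, since $\Theta_\omega\overline{\Theta_\omega}=1$ a.e.). Under the hypothesis $\Theta_\omega H^2\subset H^2$ this unitary restricts to a linear isometry of $H^2$ into itself, so $\Theta_\omega$ is a unimodular multiplier of $H^2$. The task thereby reduces to showing that such a multiplier is the boundary function of an inner function that is analytic on all of $\C^+$.

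To produce a holomorphic representative, I would test against a fixed zero-free function in $H^2$, say $f(z)=1/(z+i)$, which is holomorphic and nonvanishing on $\C^+$. By hypothesis $\Theta_\omega f\in H^2$; let $G\in H^2$ be its holomorphic extension to $\C^+$ and set $\Phi:=G/f=(z+i)\,G(z)$. Then $\Phi$ is holomorphic on $\C^+$ and its nontangential boundary values equal $\Theta_\omega(u)$ for a.e.\ $u\in\R$. Since $G\in H^2\subset N^+$ and $z+i$ is a polynomial (hence in $N^+$), we have $\Phi\in N^+$; as $|\Phi|=|\Theta_\omega|=1$ a.e.\ on $\R$ by \eqref{106}, Smirnov's theorem forces $\Phi\in H^\infty$ with $\Vert\Phi\Vert_{\infty}=1$. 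Consequently $\Phi$ is an inner function in $\C^+$.

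The decisive step is to identify $\Phi$ with $\Theta_\omega$ on $\C^+$, i.e.\ to exclude poles of $\Theta_\omega$ there. Both the pointwise product $\Theta_\omega f$ (meromorphic on $\C^+$) and $G$ (holomorphic on $\C^+$) carry the same boundary values a.e.; since the Nevanlinna class is an algebra in which a nonzero function cannot vanish on a boundary set of positive measure, it suffices to know that $\Theta_\omega$ is of bounded type in $\C^+$. Here the structure of $\Theta_\omega=\xi(\tfrac12-\omega-iz)/\xi(\tfrac12+\omega-iz)$ enters: the super-exponentially growing $\Gamma$-factors cancel in the quotient, leaving at most polynomial growth away from the poles, while the poles, namely the points of $\C^+$ at which $\xi(\tfrac12+\omega-iz)$ vanishes, form a Blaschke sequence because $\xi$ has order one. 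Hence $\Theta_\omega$ is a ratio of two bounded holomorphic functions, so $\Theta_\omega f=G$ and therefore $\Theta_\omega=\Phi$ on $\C^+$.

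Combining these, $\Theta_\omega$ is analytic and bounded by $1$ on $\C^+$, with unimodular boundary values by \eqref{106}, so it is inner in $\C^+$; being meromorphic on $\C$, it is a meromorphic inner function. I expect the main obstacle to be exactly the identification in the third paragraph: verifying that $\Theta_\omega$ is of bounded type in $\C^+$, equivalently excluding poles of $\Theta_\omega$ in the upper half-plane. For $0<\omega<1/2$ this exclusion is precisely the arithmetic content being drawn out of the hypothesis, since it amounts to a zero-free statement for $\zeta(s)$ in $\Re(s)>\tfrac12+\omega$; the care therefore lies in deducing it from $\Theta_\omega H^2\subset H^2$ through the bounded-type machinery rather than presupposing it.
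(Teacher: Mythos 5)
Your first two paragraphs are sound: the reduction to a multiplier statement, the choice of the zero-free test function $f(z)=1/(z+i)$, and the Smirnov-class argument showing that $\Phi=(z+i)G(z)$ is an inner function in $\C^+$ are all correct. You have also correctly located the real difficulty, which the paper's own proof compresses into a single clause (``the assumption implies that $\Theta_\omega$ has no poles in $\C^+$''): one must connect the a.e.\ boundary identity $\Theta_\omega f=G$ on $\R$ to an identity of meromorphic functions on $\C^+$. The gap is in how you discharge that step. The principle you invoke --- ``at most polynomial growth away from the poles'' together with ``the poles form a Blaschke sequence'' implies ``ratio of two bounded holomorphic functions'' --- is not a valid implication; bounded type is not characterized by growth away from a Blaschke set of poles, and in any case the polynomial-growth claim itself conceals nontrivial lower bounds for $|\xi(s+\omega)|$ (equivalently for $|\zeta|$) in the strip $\tfrac12<\Re(s+\omega)\leq 1$ when $0<\omega<\tfrac12$, which is exactly the regime where the lemma has content. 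So as written, the decisive third paragraph does not close.

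The fact you need is true and can be obtained cleanly, but by a different mechanism: writing $s=\tfrac12-iz$ and choosing the auxiliary shift $\omega'=\omega+1$, one has
\begin{equation*}
\Theta_\omega(z)=\frac{\xi(s-\omega)/\xi(s+\omega')}{\xi(s+\omega)/\xi(s+\omega')},
\end{equation*}
and Lagarias's unconditional inequality $|\xi(\sigma+\beta)|>|\xi(\sigma-\beta)|$ for $\Re(\sigma)>\tfrac12$, $\beta\geq\tfrac12$ (the result \eqref{102} for shifts $\geq\tfrac12$, applied with $\sigma=s+\tfrac{\omega'-\omega}{2}$, $\beta=\tfrac{\omega'+\omega}{2}$ for the numerator and $\sigma=s+\tfrac{\omega'+\omega}{2}$, $\beta=\tfrac{\omega'-\omega}{2}$ for the denominator) shows that both numerator and denominator lie in the unit ball of $H^\infty(\C^+)$ and the denominator is zero-free there. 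This exhibits $\Theta_\omega$ as a quotient of bounded analytic functions, i.e.\ of bounded type in $\C^+$, after which your Lusin--Privalov uniqueness argument identifies $\Theta_\omega$ with $\Phi$ and finishes the proof. For comparison, the paper takes a different route entirely: it takes the absence of poles in $\C^+$ as a consequence of the hypothesis, proves $\Theta_\omega$ is bounded on $\Im(z)\geq\tfrac12+\omega+\delta$ from the convergent Dirichlet series for $\zeta(s-\omega)/\zeta(s+\omega)$ and Stirling's formula, and then applies Phragm\'en--Lindel\"of in the remaining strip using $|\Theta_\omega|=1$ on $\R$. Your route, once the bounded-type step is repaired as above, is arguably more self-contained on the complex-analysis side; the paper's route avoids bounded-type machinery but needs an a priori order estimate in the strip to legitimize Phragm\'en--Lindel\"of.
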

\begin{proof} 
Let $\delta>0$. We find that $\Theta_\omega(z)$ is uniformly bounded on the upper half-plane $\Im(z) \geq 1/2+\omega+\delta$ 
by using a usual estimate for the Dirichlet series $\zeta(s-\omega)/\zeta(s+\omega)$ 
and the Stirling formula for the gamma-function. 
On the other hand, we know \eqref{106}, and the assumption implies that $\Theta_\omega$ has no poles in $\C^+$.  
Hence, by applying the Phragm\'en-Lindel\"of convexity principle 
to $\Theta_\omega$ in the strip $0 \leq \Im(z) \leq 1/2+\omega+\delta$, 
we find that $\Theta_\omega$ is bounded on $0 \leq \Im(z) \leq 1/2+\omega+\delta$. 
Therefore $\Theta_\omega$ is a bounded analytic function is $\C^+$ satisfying \eqref{106}.  
This is the definition of an inner function in $\C^+$. 
\end{proof}

{\bf i) } 
Suppose that $\Theta_\omega$ is inner in $\C^+$. Then $\Theta_\omega F \in H^2$ for every $F \in H^2$. 
Thus the inverse (shifted) Fourier transform along the line $\Im(z)=c$
\begin{equation*}
\widehat{\Theta}_\omega f(x) = \frac{1}{2\pi}\int_{\Im(z)=c} \Theta_\omega(z)F(z) x^{-\frac{1}{2} - iz} \, dz
\end{equation*}
is independent of $c >0$, and belongs to $L^2((1,\infty),dx)$, 
where $f = {\mathsf F}_{1/2}^{-1}F$ and the integral converges in the sense of $L^2$. 
On the other hand 
\begin{equation*}
(h_\omega \ast f)(x) = \frac{1}{2\pi}\int_{\Im(z)=c'} \Theta_\omega(z)F(z) x^{-\frac{1}{2} - iz} \, dz
\end{equation*}
for $c'>1/2+\omega$ by Proposition \ref{prop_1} and \cite[Theorem 65]{MR942661}, 
where the integral converges also in the sense of $L^2$. 
Comparing these two formula for large $c$, we obtain (1). 

Conversely, suppose that (1) holds. Write $g = \widehat{\Theta}_\omega f = h_\omega \ast f$ 
for arbitrary fixed $f \in L^2((1,\infty),dx)$. 
Then $g$ belongs to $L^2((0,\infty),dx)$, since $\widehat{\Theta}_{\omega}$ maps $L^2((0,\infty),dx)$ to $L^2((0,\infty),dx)$ by definition. 
In addition, $g$ has a support in $[1,\infty)$, since both $h_{\omega}$ and $f$ have support in $[1,\infty)$. 
Therefore $g$ belongs to $L^2((1,\infty),dx)$. 
Because $f$ was arbitrary, 
we have $\Theta_\omega H^2 \subset H^2$. Hence $\Theta_\omega$ is inner in $\C^+$ by Lemma \ref{lem_301}. 
\hfill $\Box$
\medskip

{\bf ii)} 
Suppose that (2) holds. Then it implies $\widehat{\Theta}_\omega L^2((1,\infty),dx) \subset L^2((1,\infty),dx)$, 
since $\widehat{\Theta}_{\omega}$ maps $L^2((0,\infty),dx)$ to $L^2((0,\infty),dx)$ by its definition. 
It means $\Theta_\omega H^2 \subset H^2$ by definition of $\widehat{\Theta}_\omega$. 
Hence $\Theta_\omega$ is inner in $\C^+$ by Lemma \ref{301} \hfill $\Box$
\medskip

{\bf iii)} 
Suppose that (3) holds. Then $h_\omega \ast f$ belongs to $L^2((1,\infty),dx)$ for every $f$ in $L^2((1,\infty),dx)$, 
since $h_\omega \ast f$ has a support in $[1,\infty)$ by its definition. 
Therefore 
\begin{equation*}
\int_{0}^{\infty} h_\omega \ast f(x) \, x^{\frac{1}{2}+iz} \,  \frac{dx}{x} \in H^2.
\end{equation*}
%
Additionally, we suppose that $f$ belongs to the dense subset $L^1((1,\infty),dx) \cap L^2((1,\infty),dx)$. 
Then 
\begin{equation*}
\int_{0}^{\infty} h_\omega \ast f(x) \, x^{\frac{1}{2}+iz} \,  \frac{dx}{x} =\Theta_\omega(z)F(z)
\end{equation*}
for $\Im(z)>1/2+\omega$ by \cite[Theorem 44]{MR942661}. Therefore 
\begin{equation*}
\widehat{\Theta}_\omega \Bigl( L^1((1,\infty),dx) \cap L^2((1,\infty),dx) \Bigr) 
\subset L^2((1,\infty),dx),
\end{equation*}
This implies that $\widehat{\Theta}_\omega L^2((1,\infty),dx) \subset L^2((1,\infty),dx)$, 
since $\widehat{\Theta}_\omega$ is continuous by its definition.  
Therefore $\Theta_\omega H^2 \subset H^2$ by definition of $\widehat{\Theta}_\omega$, 
and hence $\Theta_\omega$ is inner in $\C^+$ by Lemma \ref{301}. \hfill $\Box$

%
%
\section{Proof of Theorem \ref{thm_4}}
%
%

In this section, we study operators \eqref{208}, \eqref{209}, and their kernels toward Theorem \ref{thm_4} 
referring to Burnol \cite{B1}. 
However here we use classical arguments rather than the theory of distributions used in  \cite{B1}. 

%
%
\subsection{Fredholm integral equations}
%
%
\begin{lemma} \label{lem_401}
Suppose that $\Theta_\omega$ is inner in $\C^+$. 
Define ${\mathsf H}_\omega f$ by integral \eqref{208} for compactly supported smooth functions $f$. 
Then ${\mathsf H}_\omega f$ belongs to $L^2((0,\infty),dx)$, 
and the linear map $f \mapsto {\mathsf H}_\omega f$ is extended to the isometry ${\mathsf H}_\omega: L^2((0,\infty),dx) \to L^2((0,\infty),dx)$ satisfying 
\begin{equation} \label{401}
({\mathsf F}_{1/2}{\mathsf H}_\omega f)(z) = {\Theta}_\omega(z)\,({\mathsf F}_{1/2} f)(-z)
\end{equation}
for $z \in \R$. Moreover, \eqref{401} holds for $\Im(z) \geq 0$, if $f \in L^2((0,\infty),dx)$ has a support in $[0,b]$ for some $b>0$.   
\begin{remark} This is applied unconditionally
to $\omega \geq 1/2$, and also to $0<\omega<1/2$ under RH by discussion in Section 1.2 and 1.4. 
\end{remark}
\end{lemma}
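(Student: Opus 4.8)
The plan is to route everything through the single intertwining identity \eqref{401}, which says that after conjugation by the Mellin-type transform ${\mathsf F}_{1/2}$ the operator ${\mathsf H}_\omega$ becomes multiplication by $\Theta_\omega(z)$ composed with the reflection $z \mapsto -z$. First I would establish \eqref{401} in the region $\Im(z)>1/2+\omega$, where Proposition \ref{prop_1} is available. For $f$ smooth with support in $[0,b]$, I insert the definition \eqref{208} into ${\mathsf F}_{1/2}({\mathsf H}_\omega f)$, substitute $u=xy$ in the inner variable, and apply Proposition \ref{prop_1} to get $\int_0^\infty h_\omega(xy)\,x^{1/2+iz}\,\frac{dx}{x}=y^{-1/2-iz}\Theta_\omega(z)$; integrating against $f$ then produces $\Theta_\omega(z)\,({\mathsf F}_{1/2}f)(-z)$. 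The double integral converges absolutely for $\Im(z)>1/2+\omega$, since the factor $\int_0^\infty |h_\omega(u)|\,u^{1/2-\Im(z)}\,\frac{du}{u}$ is finite by the absolute convergence in Proposition \ref{prop_1} and the remaining $y$-integral runs over a compact set, so Fubini is legitimate and \eqref{401} holds on this half-plane.

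Once \eqref{401} is known on $\R$ — which, crucially, already presupposes ${\mathsf H}_\omega f\in L^2$, so that its boundary transform makes sense — the isometry is essentially formal: ${\mathsf F}_{1/2}$ is an isometry up to a constant, the reflection $z\mapsto -z$ is unitary on $L^2(\R)$, and by \eqref{106} multiplication by $\Theta_\omega$ preserves $L^2(\R)$-norms because $|\Theta_\omega(u)|=1$ for real $u$. Hence $\|{\mathsf H}_\omega f\|_{L^2}=\|f\|_{L^2}$ on the dense class of smooth compactly supported functions, and ${\mathsf H}_\omega$ extends uniquely to an isometry of $L^2((0,\infty),dx)$. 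The final clause, that \eqref{401} persists for $\Im(z)\geq 0$ when $f$ is supported in $[0,b]$, then follows by analytic continuation: the right-hand side is analytic in $\C^+$ because $\Theta_\omega$ is analytic there by innerness and $({\mathsf F}_{1/2}f)(-z)$ is entire for compactly supported $f$.

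The crux, and what I expect to be the main obstacle, is descending from $\Im(z)>1/2+\omega$ to the line $\Im(z)=0$, a step inseparable from proving ${\mathsf H}_\omega f\in L^2$ in the first place. Membership in $L^2$ cannot be extracted from direct estimates on the kernel, because $h_\omega$ is itself \emph{not} square-integrable: its Mellin transform $\Theta_\omega$ has modulus $1$ on $\R$ by \eqref{106} and so lies outside $L^2(\R)$. The $L^2$ bound must instead be manufactured on the Fourier side. Writing $G(z):=\Theta_\omega(z)\,({\mathsf F}_{1/2}f)(-z)$, I observe that $|G(u)|=|({\mathsf F}_{1/2}f)(-u)|$ for $u\in\R$ by \eqref{106}, so $G|_\R\in L^2(\R)$ with $\|G\|_{L^2(\R)}=\|{\mathsf F}_{1/2}f\|_{L^2(\R)}$, while $G$ is analytic in $\C^+$ by innerness. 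Since $f$ supported in $[0,b]$ forces ${\mathsf H}_\omega f$ to vanish on $(0,1/b)$ (as $h_\omega$ is supported in $[1,\infty)$), a Paley--Wiener/Hardy-space argument should identify ${\mathsf H}_\omega f$ with the $L^2$ function ${\mathsf F}_{1/2}^{-1}(G|_\R)$ by uniqueness of Mellin transforms, thereby promoting \eqref{401} to the boundary.

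The delicacy of this last step is that, because the support half-line $[1/b,\infty)$ may extend below $1$, the function $G$ need not lie in $H^2$ itself but rather in a shifted Hardy space (a factor $e^{\pm iz\log b}$ intervenes), so the Paley--Wiener inversion must be invoked in the appropriate weighted form. Reconciling the classical absolutely convergent representation valid for large $\Im(z)$ with the $L^2$ boundary limit on $\R$ is precisely where care is needed, and it is exactly here that the innerness hypothesis — analyticity of $\Theta_\omega$ throughout $\C^+$ together with $|\Theta_\omega|=1$ on $\R$ — is indispensable.
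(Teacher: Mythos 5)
Your proposal is correct and follows essentially the same route as the paper: Fubini plus Proposition \ref{prop_1} gives \eqref{401} for $\Im(z)>1/2+\omega$, the $L^2$ membership is obtained by passing from the absolutely convergent Mellin representation on a high horizontal line down to the boundary using the analyticity and boundedness of $\Theta_\omega$ in $\C^+$ together with the rapid decay of $({\mathsf F}_{1/2}f)(-z)$, the isometry then follows from $|\Theta_\omega(u)|=1$ on $\R$ and density, and the final clause follows since both sides of \eqref{401} are analytic in $\C^+$ when $f$ is supported in $[0,b]$. The "shifted Hardy space" delicacy you flag is resolved in the paper simply by a contour shift justified by boundedness of $\Theta_\omega$ on the strip, so it does not require any additional machinery.
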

\begin{proof} If $f$ is a compactly supported smooth function, we have 
\begin{equation*}
\aligned
({\mathsf F}_{1/2}{\mathsf H}_\omega f)(z) 
& = \int_{0}^{\infty}  \int_{0}^{\infty} h_\omega(xy) \,  \, x^{\frac{1}{2}+iz} \, \frac{dx}{x} \, f(y) \, dy \\
& =  \int_{0}^{\infty} h_\omega(y) \,  \, x^{\frac{1}{2}+iz} \, \frac{dx}{x}  \int_{0}^{\infty} f(y) \, y^{\frac{1}{2}-iz}\, \frac{dy}{y} 
 = \Theta(z) F(-z) \quad (F={\mathsf F}_{1/2}f) \\
\endaligned
\end{equation*}
for $\Im(z)>1/2+\omega$ by Proposition \ref{prop_1}, 
and $F(-z)$ is an entire function satisfying 
$F(-z)=O(|z|^{-n})$ as $|z| \to \infty$ in any horizontal strip $c_1 \leq \Im(z) \leq c_2$ for arbitrary fixed $n>0$. 
Therefore, we find that ${\mathsf H}_\omega f$ belongs to $L^2((0,\infty),dx)$  
by applying the Fourier inversion formula to $\Theta_\omega(z)F(-z)$ along a line  $\Im(z)=c>1/2+\omega$ 
and then moving the path of integration to the real line $\Im(z)=0$, 
since $\Theta_\omega$ is inner in $\C^+$ by assumption. 
Moreover 
\begin{equation*}
\Vert {\mathsf H}_\omega f \Vert
= \Vert \Theta_\omega(\cdot) F(-\cdot) \Vert 
= \Vert F \Vert 
= \Vert f \Vert
\end{equation*} 
by \eqref{106}. 
Recall that the set of all compactly supported smooth function in $L^2((0,\infty),dx)$ is dense in $L^2((0,\infty),dx)$. 
Therefore $f \mapsto {\mathsf H}_{\omega}f$ is extended to all $f \in L^2((0,\infty),dx)$ by continuity, 
and the extended operator is obviously isometric. 

Equality \eqref{401} holds for real $z$ by the continuity. 
Suppose that $f\in L^2((0,\infty),dx)$ has a support in $[0,b]$ for some $b>0$. 
Then ${\mathsf H}_\omega f$ belongs to $L^2((0,\infty),dx)$ and has a support in $[1/b,\infty)$. 
Therefore the left-hand side of \eqref{401} is defined by the shifted Fourier integral and analytic in $\C^+$. 
On the other hand, $({\mathsf F}_{1/2} f)(-z)$ in the right-hand side of \eqref{401} 
is also defined by the shifted Fourier integral and analytic in $\C^+$, 
since $f$ has a support in $[0,b]$ by the assumption. 
Hence both sides of \eqref{401} are analytic functions in $\C^+$, 
and they are equal on the real line. 
Thus equality \eqref{401} holds for $\Im(z)\geq 0$.
\end{proof}
\begin{lemma} \label{lem_402}
Suppose that $\omega > 1/2$. $($It implies automatically that $\Theta_\omega$ is inner in $\C^+$.$)$ 
Then the operator ${\mathsf H}_{\omega,a}={\mathsf P}_a{\mathsf H}_{\omega}{\mathsf P}_a$ defined in \eqref{209} 
is a self-adjoint Hilbert-Schmidt type operator if $a>1$, 
and ${\mathsf H}_{\omega,a}=0$ if $0<a \leq 1$. 
\end{lemma}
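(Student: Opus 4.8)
The plan is to argue entirely at the level of the integral kernel of $\mathsf{H}_{\omega,a}$. Since $\mathsf{P}_a$ is the restriction to $(0,a)$, formula \eqref{208} shows that for $f\in L^2((0,a),dx)$ one has $(\mathsf{H}_{\omega,a}f)(x)=\int_{0}^{a} h_\omega(xy)\,f(y)\,dy$ for $0<x<a$; that is, $\mathsf{H}_{\omega,a}$ is the integral operator on $L^2((0,a),dx)$ with kernel $k_a(x,y)=h_\omega(xy)$ on the square $(0,a)^2$. Here $\mathsf{H}_\omega$ is a genuine bounded (indeed isometric) operator by Lemma \ref{lem_401}, because $\omega>1/2$ forces $\Theta_\omega$ to be inner in $\C^+$.

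Two of the three assertions are then immediate from the support and reality of $h_\omega$. For $0<a\le 1$ we have $xy<a^2\le 1$ throughout $(0,a)^2$, and $h_\omega$ vanishes on $(0,1)$ by \eqref{204}; hence $k_a\equiv 0$ and $\mathsf{H}_{\omega,a}=0$. For self-adjointness, the kernel is real-valued and symmetric, $\overline{k_a(y,x)}=h_\omega(yx)=h_\omega(xy)=k_a(x,y)$, so once $k_a\in L^2((0,a)^2)$ is established the operator coincides with its adjoint.

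It thus remains to prove, for $a>1$, that $k_a$ lies in $L^2((0,a)^2,dx\,dy)$, i.e.\ that $\mathsf{H}_{\omega,a}$ is Hilbert--Schmidt. The key reduction is the substitution $u=xy$ at fixed $x$, which turns the two-dimensional Hilbert--Schmidt integral into a one-dimensional one:
\begin{equation*}
\int_{0}^{a}\int_{0}^{a}|h_\omega(xy)|^2\,dy\,dx
=\int_{0}^{a}\frac{1}{x}\int_{0}^{ax}|h_\omega(u)|^2\,du\,dx
\le (2\log a)\int_{1}^{a^2}|h_\omega(u)|^2\,du,
\end{equation*}
where, since $h_\omega$ is supported in $[1,\infty)$, the inner integral vanishes for $x\le 1/a$ and is bounded by $\int_{1}^{a^2}|h_\omega|^2$ for $1/a<x<a$, while $\int_{1/a}^{a}dx/x=2\log a$. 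The finiteness of the right-hand side is exactly the statement, recorded after \eqref{204}, that $h_\omega$ is $L^2$ on every finite interval $[1,b]$ when $\omega>1/2$.

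The main (and essentially the only genuine) obstacle is this local $L^2$ integrability of $h_\omega$, which is where the hypothesis $\omega>1/2$ enters. The point is that each summand $c_\omega(n)\,g_\omega(n/x)/x$ of \eqref{204} inherits from \eqref{201} a singularity as $x\to n^{+}$ of size $(1-n/x)^{\omega-1}\asymp (x-n)^{\omega-1}$, and such a singularity is square-integrable near $x=n$ precisely when $2(\omega-1)>-1$, i.e.\ when $\omega>1/2$. Summing the finitely many contributions from the integers $n\le a^2$ then yields $\int_{1}^{a^2}|h_\omega|^2\,du<\infty$ and completes the Hilbert--Schmidt bound; self-adjointness follows as noted above.
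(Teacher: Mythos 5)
Your proposal is correct and follows essentially the same route as the paper: the vanishing for $0<a\le 1$ and the self-adjointness come from the support and reality of $h_\omega$, and the Hilbert--Schmidt property is reduced by the same change of variables to the bound $2\log a\int_{1}^{a^2}|h_\omega(u)|^2\,du$, whose finiteness for $\omega>1/2$ rests on the $(x-n)^{\omega-1}$ singularities from \eqref{201}. The only (immaterial) difference is that you substitute $u=xy$ at fixed $x$ whereas the paper rescales at fixed $y$.
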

\begin{proof} If $0<a \leq 1$ and $0<x<a$, we have  
\begin{equation*}
{\mathsf H}_\omega {\mathsf P}_a f(x) = \int_{0}^{a} h_{\omega}(xy)f(y)\,dy =0,  
\end{equation*}
since $h_{\omega}(x)=0$ for $0<x<1$, and $0 \leq xy < a^2 \leq 1$. 
Hence ${\mathsf H}_{\omega,a}=0$ if $0<a \leq 1$. 

Denote by $K(x,y)=h_{\omega}(xy)$ the kernel of ${\mathsf H}_{\omega,a}$. 
We have $K(x,y)=\overline{K(y,x)}$, since $h_\omega(xy)$ is real-valued. 
Thus ${\mathsf H}_{\omega,a}$ is self-adjoint. 
For $a>1$, we have 
\begin{equation*}
\aligned
\int_{0}^{a} \int_{0}^{a} |K(x,y)|^2 \, dx dy 
& = \int_{1/a}^{a} \int_{1/a}^{a} |h_\omega(xy)|^2 \, dx dy \\
& \leq \int_{1/a}^{a}  \frac{dy}{y} \, \int_{1/a^2}^{a^2} |h_\omega(x)|^2 \, dx 
= 2 \log a  \int_{1}^{a^2} |h_\omega(x)|^2 \, dx.   
\endaligned
\end{equation*}
Here $\int_{1}^{a^2} |h_\omega(x)|^2 \, dx < \infty$ 
if $\omega>1/2$, 
since $h_\omega(x)$ has only finitely many singularities 
at $x=n$ ($1 \leq n \leq \lfloor a^2 \rfloor$) in $[1,a^2]$, 
and $h_\omega(x) \ll |x-n|^{\omega-1}$ around $x=n$ by \eqref{201} and \eqref{204}. 
Hence, $K(x,y)=h_\omega(xy)$ is a Hilbert-Schmidt kernel if $a>1$ and $\omega>1/2$. 
\end{proof}
\begin{lemma} \label{lem_403}
Let $a>0$. Suppose that $\omega > 1/2$. 
Then the support of ${\mathsf H}_{\omega}{\mathsf P}_{a}f$ is not compact 
for $f \in L^2((0,\infty),dx)$ unless ${\mathsf H}_{\omega}{\mathsf P}_{a}f =0$.  
\end{lemma}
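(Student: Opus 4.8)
The plan is to argue by contraposition: I assume that $\psi := \mathsf{H}_\omega \mathsf{P}_a f$ is nonzero and has compact support, and derive a contradiction. Write $g := \mathsf{P}_a f$, which is supported in $(0,a)$. Since $h_\omega$ vanishes on $(0,1)$, exactly as in the proof of Lemma \ref{lem_402} the output $\psi$ is automatically supported in $[1/a,\infty)$; so the compactness hypothesis means that $\psi$ is supported in a compact subinterval $[1/a,b]\subset(0,\infty)$. Because $\psi\in L^1\cap L^2$ with support bounded away from both $0$ and $\infty$, its shifted Fourier transform $\Psi(z):=(\mathsf{F}_{1/2}\psi)(z)$ is an \emph{entire} function of finite exponential type (Paley--Wiener).

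First I would record the intertwining relation. As $g$ has support in $[0,a]$, Lemma \ref{lem_401} (that is, \eqref{401}) applies on the closed upper half-plane and gives $\Psi(z)=\Theta_\omega(z)\,G(-z)$ for $\Im(z)\geq0$, where $G(z):=(\mathsf{F}_{1/2}g)(z)=\int_0^a g(x)\,x^{-1/2+iz}\,dx$ is analytic throughout the open lower half-plane $\Im(z)<0$. Next I would pass to the lower half-plane: evaluating the previous identity at $-z$ (legitimate when $\Im(z)\leq0$) and invoking the reflection identity \eqref{105}, $\Theta_\omega(z)\Theta_\omega(-z)=1$, I obtain $G(z)=\Theta_\omega(z)\,\Psi(-z)$ first on $\R$ and then, by the identity theorem for the meromorphic functions on each side, throughout $\Im(z)\leq0$.

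The crux is then a pole-cancellation and zero-counting argument. In the lower half-plane $\Theta_\omega(z)=\xi(\tfrac12-\omega-iz)/\xi(\tfrac12+\omega-iz)$ has a pole at each point $z_\rho:=-\gamma+i(\beta-\tfrac12-\omega)$ coming from a nontrivial zero $\rho=\beta+i\gamma$ of $\xi$; since $0<\beta<1$ and $\omega>1/2$, every such $z_\rho$ lies strictly in $\Im(z)<0$, and (again because $2\omega>1$) no $z_\rho$ is cancelled by a zero of the numerator $\xi(\tfrac12-\omega-iz)$. Since $G$ is analytic at each $z_\rho$, the identity $G=\Theta_\omega\,\Psi(-\cdot)$ forces $\Psi(-z_\rho)=0$ for every zero $\rho$ of $\xi$. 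However, the number of points $-z_\rho$ of modulus at most $T$ is $\asymp T\log T$ by the Riemann--von Mangoldt formula, whereas an entire function of finite exponential type has at most $O(T)$ zeros in $|z|\leq T$ (Jensen's inequality). This contradiction forces $\Psi\equiv0$, and since $\mathsf{F}_{1/2}$ is (up to a constant) an isometry, $\psi=\mathsf{H}_\omega\mathsf{P}_a f=0$, contrary to assumption.

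I expect the main obstacle to be the careful justification that $\Psi$ is genuinely entire of finite exponential type and that \eqref{401} and \eqref{105} may legitimately be combined into the lower-half-plane identity $G=\Theta_\omega\,\Psi(-\cdot)$ as an equality of meromorphic functions, so that the analyticity of $G$ really does annihilate the poles of $\Theta_\omega$. Once this structural step is secured, the decisive comparison---super-linear growth $\asymp T\log T$ of the zero count of $\xi$ against the linear zero-counting bound for exponential-type functions---is comparatively routine.
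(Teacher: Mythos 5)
Your proposal is correct and follows essentially the same route as the paper: Paley--Wiener to make $\Psi$ entire of exponential type, the intertwining relation \eqref{401}, the observation that for $\omega>1/2$ the zeros and poles of $\Theta_\omega$ lie in opposite half-planes so no cancellation can occur, and the contradiction between the Riemann--von Mangoldt count $\asymp T\log T$ and the $O(T)$ Jensen bound for exponential-type functions. The only (cosmetic) difference is that the paper phrases the pole-cancellation as a factorization $\Psi(z)=\xi(\tfrac12-\omega-iz)\cdot(\text{entire})$, whereas you invert the relation via \eqref{105} and read off the zeros of $\Psi$ directly at the reflected pole locations $-z_\rho$; these are the same zero set under $\rho\leftrightarrow 1-\rho$.
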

\begin{proof} We prove this by contradiction. 
Suppose that ${\mathsf H}_{\omega}{\mathsf P}_{a} f \not=0$ and has a compact support. 
Then ${\mathsf F}_{1/2}{\mathsf H}_{\omega}{\mathsf P}_{a} f$ is an entire function of exponential type by the Paley-Wiener theorem.  
On the other hand, we have
\begin{equation*}
{\mathsf F}_{1/2}{\mathsf H}_{\omega}{\mathsf P}_{a} f(z) 
= \Theta_{\omega}(z) \cdot {\mathsf F}_{1/2}{\mathsf P}_{a} f(-z).
\end{equation*}
This implies that $G(z):={\mathsf P}_{a} f(-z)/\xi(\frac{1}{2}+\omega-iz) $ is entire, 
because  \eqref{101} holds unconditionally for the denominator $E(z)=\xi(\frac{1}{2}+\omega-iz)$ 
of $\Theta_\omega$ defined in \eqref{105_1}, and $E(z)\not=0$ on $\Im(z)=0$. 
Thus, we have  
\begin{equation*}
{\mathsf F}_{1/2}{\mathsf H}_{\omega}{\mathsf P}_{a} f(z)  = \xi\left(\frac{1}{2}-\omega-iz\right) \cdot G(z),
\end{equation*}
where the right-hand side is a product of entire functions. 
The point is that the zeros in the numerator of $\Theta_\omega$ 
can not kill the poles of the denominator, which therefore must be killed by zeros of ${\mathsf P}_{a} f(-z)$. 
This allows $\xi(\frac{1}{2}-\omega-iz)$ to be factored out.

The entire function on the right-hand side has at least $\frac{1}{\pi}T \log T$ zeros 
in the disk of radius $T$ around the origin, as $T \to \infty$ (\cite[Theorem 9.4]{MR882550}). 
However all entire functions of exponential type have at most $O(T)$ zeros in the disk of radius $T$ around the origin, as $T \to \infty$, 
because of the Jensen formula (\cite[\S2.5 (15)]{Le}). This is a contradiction. 
(The proof contained an error in the first version, but it was revised by the reviewer.)
\end{proof}
\begin{lemma} \label{lem_404}
Let $\omega > 1/2$ and $a>1$. 
We have 
${\rm i)}$ ${\mathsf H}_{\omega,a} f =0$ for every $f \in L^2((0,1/a),dx)$,  
${\rm ii)}$ $\Vert {\mathsf H}_{\omega,a} f \Vert < \Vert f \Vert$ 
for every $0 \not=f \in L^2((0,a),dx)$, and 
${\rm iii)}$ $\Vert {\mathsf H}_{\omega,a} \Vert < 1$. 
In particular, $1 \pm {\mathsf H}_{\omega,a}$ are invertible operator on $L^2((0,a),dx)$.  
\end{lemma}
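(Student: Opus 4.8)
The plan is to prove the three claims (i), (ii), (iii) in a natural logical order, deriving the invertibility of $1 \pm \mathsf{H}_{\omega,a}$ as an immediate consequence of (iii). Throughout I would keep in mind that $\mathsf{H}_\omega$ is an isometry on $L^2((0,\infty),dx)$ by Lemma \ref{lem_401}, that $h_\omega$ is supported in $[1,\infty)$, and that $\mathsf{H}_{\omega,a} = \mathsf{P}_a \mathsf{H}_\omega \mathsf{P}_a$ is self-adjoint Hilbert--Schmidt by Lemma \ref{lem_402}.

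\textbf{Claim (i).} First I would prove the vanishing statement, which is the support observation already used in Lemma \ref{lem_402}. For $f \in L^2((0,1/a),dx)$ and $0 < x < a$, the integrand $h_\omega(xy)$ in $(\mathsf{H}_{\omega,a} f)(x) = \int_0^{1/a} h_\omega(xy) f(y)\,dy$ vanishes because $0 \le xy < a \cdot (1/a) = 1$ and $h_\omega$ is supported in $[1,\infty)$. Hence $\mathsf{H}_{\omega,a} f = 0$, which isolates the ``active'' part of the domain to $L^2((1/a,a),dx)$.

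\textbf{Claim (ii).} This is the heart of the argument. Since $\mathsf{H}_\omega$ is an isometry, $\|\mathsf{H}_\omega \mathsf{P}_a f\| = \|\mathsf{P}_a f\| \le \|f\|$, so the contraction $\|\mathsf{H}_{\omega,a} f\| = \|\mathsf{P}_a \mathsf{H}_\omega \mathsf{P}_a f\| \le \|\mathsf{H}_\omega \mathsf{P}_a f\| \le \|f\|$ is automatic. The real work is to show the inequality is \emph{strict} for $f \ne 0$. I would argue that equality $\|\mathsf{H}_{\omega,a} f\| = \|f\|$ forces two separate equalities to be saturated: first $\|\mathsf{P}_a f\| = \|f\|$ (so $f$ is supported in $(0,a)$, which is given), and second $\|\mathsf{P}_a \mathsf{H}_\omega \mathsf{P}_a f\| = \|\mathsf{H}_\omega \mathsf{P}_a f\|$, meaning $\mathsf{P}_a$ acts as the identity on $\mathsf{H}_\omega \mathsf{P}_a f$, i.e. $\mathsf{H}_\omega \mathsf{P}_a f$ is supported in $(0,a)$. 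But $\mathsf{H}_\omega \mathsf{P}_a f$ always has support in $[1/a,\infty)$, so this would make $\mathsf{H}_\omega \mathsf{P}_a f$ compactly supported. By Lemma \ref{lem_403}, a compactly supported $\mathsf{H}_\omega \mathsf{P}_a f$ must be identically zero; since $\mathsf{H}_\omega$ is an isometry, this forces $\mathsf{P}_a f = 0$, hence $f = 0$. This contradicts $f \ne 0$, so the inequality in (ii) is strict. The main obstacle here is the clean extraction of the two saturation conditions from a single norm equality, which I would phrase via the fact that an isometry followed by an orthogonal projection decreases norm strictly unless the image already lies in the projection's range; Lemma \ref{lem_403} is precisely the input that rules out that degenerate case.

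\textbf{Claim (iii) and invertibility.} The operator-norm bound $\|\mathsf{H}_{\omega,a}\| < 1$ does not follow formally from the pointwise strict inequality in (ii), since a supremum of strict inequalities can still equal $1$. Here I would invoke compactness: $\mathsf{H}_{\omega,a}$ is self-adjoint and Hilbert--Schmidt (Lemma \ref{lem_402}), hence compact, so its spectrum consists of eigenvalues accumulating only at $0$, and $\|\mathsf{H}_{\omega,a}\|$ equals the largest $|\lambda|$ over eigenvalues $\lambda$. If $\|\mathsf{H}_{\omega,a}\| = 1$ there would be a unit eigenvector $f$ with $\mathsf{H}_{\omega,a} f = \pm f$, giving $\|\mathsf{H}_{\omega,a} f\| = \|f\|$ and contradicting (ii). Therefore $\|\mathsf{H}_{\omega,a}\| < 1$. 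Finally, since $\pm 1$ are not eigenvalues of the self-adjoint compact operator $\mathsf{H}_{\omega,a}$ and $\|\mathsf{H}_{\omega,a}\| < 1$, the operators $1 \pm \mathsf{H}_{\omega,a}$ are bijective on $L^2((0,a),dx)$ with bounded inverse (their spectra are bounded away from $0$), completing the proof.
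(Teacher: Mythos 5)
Your proposal is correct and follows essentially the same route as the paper: the support observation for (i), the saturation argument for (ii) reducing to the statement that $\mathsf{H}_\omega\mathsf{P}_a f$ would have compact support in $[1/a,a]$ and hence vanish by Lemma \ref{lem_403}, and the compactness/eigenvalue argument for (iii) and the invertibility of $1\pm\mathsf{H}_{\omega,a}$. No gaps.
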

\begin{proof} 
If $0<x<1/a$, we  have   
\begin{equation*}
{\mathsf H}_\omega {\mathsf P}_a f(x) = \int_{0}^{a} h_\omega(xy)f(y)\,dy =0,  
\end{equation*}
since $h_\omega(x)=0$ for $0<x<1$, and $0 \leq xy < 1$. 
Hence i) is proved. 

To prove ii), 
it is sufficient to show $\Vert {\mathsf H}_{\omega,a} f \Vert \not= \Vert f \Vert$ unless $f=0$, 
because $\Vert {\mathsf H}_\omega \Vert =1$, 
$\Vert {\mathsf H}_{\omega,a} \Vert \leq \Vert {\mathsf P}_a \Vert \cdot \Vert {\mathsf H}_{\omega} \Vert \cdot \Vert {\mathsf P}_a \Vert = 1$, 
and 
$\Vert {\mathsf H}_{\omega,a} f \Vert \leq \Vert {\mathsf H}_{\omega,a} \Vert \cdot \Vert f \Vert \leq \Vert f \Vert$.  
Here $\Vert {\mathsf H}_{\omega,a} f \Vert \not= \Vert f \Vert$ is equivalent to $\Vert {\mathsf P}_a {\mathsf H}_\omega f \Vert \not= \Vert f \Vert$, 
since ${\mathsf P}_a f = f$ for $f \in L^2((0,a),dx)$. 
Suppose that $\Vert {\mathsf P}_a {\mathsf H}_\omega f \Vert = \Vert f \Vert$ for some $0 \not= f \in L^2((0,a),dx)$.  
Then it implies $\Vert {\mathsf P}_a {\mathsf H}_\omega f \Vert= \Vert {\mathsf H}_\omega f\Vert$ 
by $\Vert {\mathsf H}_\omega f \Vert = \Vert f \Vert$. 
Therefore
\begin{equation*}
\int_{0}^{a} |{\mathsf H}_\omega f(x)|^2 \, dx 
= \int_{0}^{\infty} |{\mathsf H}_\omega f(x)|^2 \, dx.
\end{equation*}
Thus ${\mathsf H}_\omega f(x) = 0$ for almost every $x>a$. On the other hand, we have 
\begin{equation*}
{\mathsf H}_\omega f(x) = \int_{0}^{a} h_\omega(xy)f(y)\,dy 
= \int_{1/x}^{a} h_\omega(xy)f(y)\,dy =0 
\end{equation*}
for $0<x<1/a$ by $f\in L^2((0,a),dx)$. Hence ${\mathsf H}_\omega f$ has a compact support contained in $[1/a,a]$. 
However, it is impossible for any $f\not=0$ by Lemma \ref{lem_403}. 
As the consequence $\Vert {\mathsf H}_{\omega,a} f \Vert < \Vert f \Vert$ for $0\not=f \in L^2((1/a,a),dx)$. 

Finally, we prove iii). By Lemma \ref{lem_402}, ${\mathsf H}_{\omega,a}$ is a self-adjoint compact operator. 
Therefore, ${\mathsf H}_{\omega,a}$ has purely discrete spectrum which has no accumulation points except for $0$, 
and one of $\pm \Vert {\mathsf H}_{\omega,a} \Vert$ is an eigenvalue of ${\mathsf H}_{\omega,a}$. 
However, by ii), every eigenvalue of ${\mathsf H}_{\omega,a}$ has an absolute value less than $1$. 
Hence $\Vert {\mathsf H}_{\omega,a} \Vert <1$. 
\end{proof}

\begin{lemma} \label{lem_405}
Let $\omega >1/2$, $a>1$ and $\varepsilon \in \{\pm 1\}$. Then the integral equation
\begin{equation} \label{402}
X(x) + \varepsilon \int_{0}^{a} h_\omega(xy) X(y) \, dy 
= h_\omega(ax)
\end{equation}
has unique solution $X= \phi_{a}^{\,\varepsilon}$ in $L^2((0,a),dx)$, 
which is real-valued almost everywhere in $[0,a]$ and vanishes almost everywhere in $[0,1/a]$.  

Moreover, if $\omega>1$, the solution $\phi_{a}^{\varepsilon}$ is a real-valued continuous function on $[0,a]$ vanishing on $[0,1/a]$. 
\end{lemma}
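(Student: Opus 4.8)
The plan is to read \eqref{402} as the operator equation
\[
(1 + \varepsilon\,{\mathsf H}_{\omega,a}) X = \psi_a, \qquad \psi_a(x) := h_\omega(ax),
\]
on $L^2((0,a),dx)$, where ${\mathsf H}_{\omega,a}$ is the operator \eqref{209} with kernel $h_\omega(xy)$. First I would check that $\psi_a$ lies in this space: the substitution $u = ax$ gives $\int_0^a |h_\omega(ax)|^2\,dx = a^{-1}\int_1^{a^2} |h_\omega(u)|^2\,du$, which is finite for $\omega > 1/2$ by the local bound $h_\omega(x) \ll |x-n|^{\omega-1}$ near the integers (square-integrable exactly when $\omega>1/2$) already used in the proof of Lemma \ref{lem_402}. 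Since Lemma \ref{lem_404} asserts that $1 \pm {\mathsf H}_{\omega,a}$ are invertible on $L^2((0,a),dx)$ for $\omega>1/2$ and $a>1$, the operator $1 + \varepsilon\,{\mathsf H}_{\omega,a}$ is invertible, and \eqref{402} has the unique solution $\phi_a^{\varepsilon} = (1 + \varepsilon\,{\mathsf H}_{\omega,a})^{-1}\psi_a$.

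The two qualitative properties would then follow quickly. Because $h_\omega$ and $\psi_a$ are real-valued, ${\mathsf H}_{\omega,a}$ commutes with complex conjugation, so applying conjugation to \eqref{402} shows that $\overline{\phi_a^{\varepsilon}}$ is also a solution; uniqueness forces $\phi_a^{\varepsilon} = \overline{\phi_a^{\varepsilon}}$ almost everywhere. For the vanishing on $[0,1/a]$, observe that if $0 < x < 1/a$ then $xy < 1$ for every $y \in (0,a)$, so $h_\omega(xy) = 0$ and hence $({\mathsf H}_{\omega,a}\phi_a^{\varepsilon})(x) = 0$; likewise $ax < 1$ gives $\psi_a(x) = 0$. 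Thus \eqref{402} degenerates to $\phi_a^{\varepsilon}(x) = 0$ for almost every $x \in (0,1/a)$.

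For the regularity claim when $\omega > 1$ I would first upgrade $h_\omega$ to a genuinely continuous function. By \eqref{201} the exponent $\omega - 1 > 0$ gives $g_\omega(1^-) = 0$, and feeding this into the definition \eqref{204} shows that the summand crossing each integer tends to $0$ there, so $h_\omega$ is continuous across every integer and in fact continuous on $[0,\infty)$ with $h_\omega \equiv 0$ on $[0,1]$. Consequently $(x,y) \mapsto h_\omega(xy)$ is continuous, hence uniformly continuous and bounded, on the compact square $[0,a]^2$. Writing \eqref{402} as $\phi_a^{\varepsilon}(x) = \psi_a(x) - \varepsilon\int_0^a h_\omega(xy)\phi_a^{\varepsilon}(y)\,dy$, I would show the right-hand side is continuous in $x$: the term $\psi_a(x) = h_\omega(ax)$ is continuous on $[0,a]$, while for the integral the modulus of continuity $\eta$ of $h_\omega$ on $[0,a^2]$ yields $|h_\omega(xy) - h_\omega(x'y)| \le \eta(a|x-x'|)$, whence
\[
\left| \int_0^a \bigl(h_\omega(xy) - h_\omega(x'y)\bigr)\phi_a^{\varepsilon}(y)\,dy \right| \le \eta(a|x-x'|)\,\Vert \phi_a^{\varepsilon}\Vert_{L^1(0,a)} \longrightarrow 0
\]
as $x' \to x$, the $L^1$-norm being finite by Cauchy--Schwarz on the finite interval. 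Hence $\phi_a^{\varepsilon}$ agrees almost everywhere with a continuous function, which we adopt as its representative; the pointwise vanishing on $[0,1/a]$ then follows from the same kernel-support argument, now valid for every $x \in [0,1/a]$ since $h_\omega(1) = 0$. The main obstacle is exactly this bootstrap from an $L^2$-solution to a continuous one: it requires honest continuity of $h_\omega$, not merely square-integrability, which is why the hypothesis is strengthened from $\omega > 1/2$ to $\omega > 1$.
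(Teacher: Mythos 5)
Your proposal is correct and follows essentially the same route as the paper: invertibility of $1\pm{\mathsf H}_{\omega,a}$ from Lemmas \ref{lem_402} and \ref{lem_404} (the Fredholm alternative) gives existence and uniqueness, the support of $h_\omega$ gives the vanishing on $[0,1/a]$, and continuity of the kernel $h_\omega(xy)$ on $[0,a]\times[0,a]$ for $\omega>1$ gives continuity of the solution. Your extra details (the $L^2$ check for $h_\omega(a\cdot)$, the conjugation argument for real-valuedness, the modulus-of-continuity estimate) are harmless elaborations of steps the paper treats as immediate.
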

\begin{proof}
By Lemmas \ref{lem_402} and \ref{lem_404}, ${\mathsf H}_{\omega,a}$ is a compact operator such that $\pm 1$ belong to its resolvent set. 
Therefore, integral equation \eqref{402} has unique solution $\phi_{a}^{\varepsilon}$ in $L^2((0,a),dx)$ by the Fredholm alternative. 
We have $h_\omega(ax)=0$ and $\int_{0}^{a} h_\omega(xy) \phi_{a}^{\varepsilon}(y) \, dy=0$ for almost every $0<x<1/a$, 
since $0<xy<1$ for $0<y\leq a$, and $h_\omega(x)=0$ for $0<x<1$.  

On the other hand, if $\omega>1$, the integral $\int_{0}^{a} h_\omega(xy) f(y) \, dy$ 
defines a continuous function on $[0,a]$ which vanishes on $[0,1/a]$ for every $f \in L^2((0,a),dx)$, 
since the kernel $h_\omega(xy)$ is continuous on $[0,a] \times [0,a]$ by \eqref{201}. 
Hence $\phi_{a}^{\varepsilon}$ is continuous on $[0,a]$ and $\phi_{a}^{\varepsilon}(x)=0$ for $0\leq x \leq 1/a$. 
Obviously $\phi_a^\varepsilon$ is real-valued, since $h_\omega(x)$ is real-valued, 
\end{proof}

\begin{lemma} \label{lem_406}
Let $\omega>1/2$, $a>1$ and $\varepsilon \in \{\pm 1\}$. 
Then the integral equation \eqref{402} 
has unique extended solution $X= \tilde{\phi}_{a}^{\,\varepsilon}$ in $L^2((0,b),dx)$ for arbitrary $b>a$, 
which is real-valued almost everywhere in $[0,b]$, 
and $\tilde{\phi}_{a}^{\,\varepsilon}(x) = \phi_{a}^{\,\varepsilon}(x)$ for almost every $0<x<a$. 

Moreover, if $\omega>1$, the integral equation \eqref{402} 
has unique extended solution $X= \tilde{\phi}_{a}^{\,\varepsilon}$ in $C^0(0,\infty)$, 
which is real-valued on $[0,\infty)$ and   satisfies $\tilde{\phi}_{a}^{\,\varepsilon}(x) = \phi_{a}^{\,\varepsilon}(x)$ for $0<x<a$. 
\end{lemma}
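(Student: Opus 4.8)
The plan is to exploit the fact that the integral operator in \eqref{402} always integrates over $(0,a)$, so its value depends only on the restriction of the unknown $X$ to $(0,a)$. Since Lemma \ref{lem_405} already pins down that restriction as $\phi_{a}^{\,\varepsilon}$, equation \eqref{402} itself \emph{prescribes} the values of any solution for $x \geq a$. I would therefore simply define the extension by
\begin{equation*}
\tilde{\phi}_{a}^{\,\varepsilon}(x) := h_\omega(ax) - \varepsilon \int_{0}^{a} h_\omega(xy)\, \phi_{a}^{\,\varepsilon}(y)\, dy \qquad (x > 0),
\end{equation*}
and verify its properties. For $0 < x < a$ the defining identity \eqref{402} for $\phi_{a}^{\,\varepsilon}$ gives $\tilde{\phi}_{a}^{\,\varepsilon}(x) = \phi_{a}^{\,\varepsilon}(x)$, so $\tilde{\phi}_{a}^{\,\varepsilon}$ genuinely extends $\phi_{a}^{\,\varepsilon}$; and since both $h_\omega$ and $\phi_{a}^{\,\varepsilon}$ are real-valued, so is $\tilde{\phi}_{a}^{\,\varepsilon}$. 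That it solves \eqref{402} for all $x$ is immediate from the definition.

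For the regularity in the first assertion I would note that the integral term is exactly $({\mathsf H}_\omega \phi_{a}^{\,\varepsilon})(x)$, where $\phi_{a}^{\,\varepsilon}$ is extended by zero to $(0,\infty)$. As $\phi_{a}^{\,\varepsilon} \in L^2((0,a),dx)$ has compact support, Lemma \ref{lem_401} gives ${\mathsf H}_\omega \phi_{a}^{\,\varepsilon} \in L^2((0,\infty),dx)$. The remaining term $x \mapsto h_\omega(ax)$ lies in $L^2((0,b),dx)$ for every $b > a$, since the substitution $u = ax$ reduces $\int_0^b |h_\omega(ax)|^2\,dx$ to $a^{-1}\int_0^{ab}|h_\omega(u)|^2\,du$, which is finite because $h_\omega$ is $L^2$ on every finite interval when $\omega > 1/2$. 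Hence $\tilde{\phi}_{a}^{\,\varepsilon} \in L^2((0,b),dx)$ for every $b > a$.

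Uniqueness I would obtain from the invertibility established earlier. If $X_1, X_2$ are two solutions of \eqref{402} in $L^2((0,b),dx)$, their difference $X$ satisfies $X(x) + \varepsilon \int_0^a h_\omega(xy) X(y)\,dy = 0$. Restricting $x$ to $(0,a)$ shows that $X|_{(0,a)}$ lies in the kernel of $1 + \varepsilon\,{\mathsf H}_{\omega,a}$, which is trivial since $1 \pm {\mathsf H}_{\omega,a}$ are invertible by Lemma \ref{lem_404}. Thus $X \equiv 0$ on $(0,a)$, whence the integral term vanishes and $X \equiv 0$ on $(0,b)$.

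Finally, for $\omega > 1$ I would upgrade $L^2$ to continuity. The key observation is that $g_\omega(1^-) = 0$ when $\omega > 1$ by \eqref{201}, so in the sum \eqref{204} each new summand $c_\omega(n)\,g_\omega(n/x)$ enters continuously as $x$ crosses the integer $n$; together with the continuity of $g_\omega$ on $(0,1)$ this makes $h_\omega$ continuous on all of $(0,\infty)$ (with $h_\omega \equiv 0$ on $(0,1]$). Then $x \mapsto h_\omega(ax)$ is continuous, and continuity of the kernel together with $\phi_{a}^{\,\varepsilon} \in L^1((0,a))$ lets me pass to the limit under the integral to see that $x \mapsto \int_0^a h_\omega(xy)\phi_{a}^{\,\varepsilon}(y)\,dy$ is continuous; hence $\tilde{\phi}_{a}^{\,\varepsilon} \in C^0(0,\infty)$, and uniqueness in this class follows from the same homogeneous-equation argument. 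I expect the only delicate point to be this continuity of $h_\omega$ across the integer points; everything else is a direct consequence of the ``integrate only over $(0,a)$'' structure of \eqref{402} combined with Lemmas \ref{lem_401} and \ref{lem_404}.
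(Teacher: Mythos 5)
Your proposal is correct and follows essentially the same route as the paper: define the extension by the formula $\tilde{\phi}_{a}^{\,\varepsilon}(x)=h_\omega(ax)-\varepsilon\int_0^a h_\omega(xy)\phi_a^{\,\varepsilon}(y)\,dy$, observe that any solution on $(0,b)$ is determined by its restriction to $(0,a)$ (so uniqueness reduces to Lemma \ref{lem_405}), and upgrade to continuity for $\omega>1$ from the continuity of $h_\omega$. The only cosmetic divergence is that you justify the $L^2$ membership of the integral term via the isometry of Lemma \ref{lem_401}, whereas the paper gets it directly from the Cauchy--Schwarz inequality and the local square-integrability of $h_\omega$ for $\omega>1/2$; both work.
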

\begin{proof}
The solution $\phi_{a}^{\,\varepsilon}$ of Lemma \ref{lem_405} is extended to 
the solution $\tilde{\phi}_{a}^{\,\varepsilon}$ on $(0,b)$ by 
\begin{equation} \label{404_1}
\tilde{\phi}_{a}^{\,\varepsilon}(x) 
= h_\omega(ax) -  \varepsilon \int_{0}^{a} h_\omega(xy) \phi_{a}^{\,\varepsilon}(y) \, dy. 
\end{equation}
The right-hand side belongs to $L^2((0,b),dx)$ by the Cauchy-Schwartz inequality, 
since $h_\omega(x)$ belongs to $L^2((0,b'),dx)$ for every $0<b'<\infty$ when $\omega>1/2$ 
and the integral on the right-hand side vanishes for almost every $0<x<1/a$. 
Clearly, $\tilde{\phi}_{a}^{\,\varepsilon}(x) = \phi_{a}^{\,\varepsilon}(x)$ for almost every $0<x<a$.  
Conversely, equality \eqref{404_1} shows that every solution of \eqref{402} on $(0,b)$ is determined by its restriction on $(0,a)$. 
Hence the uniqueness of solutions follows from Lemma \ref{lem_405}. 
By the way of the extension, $\tilde{\phi}_{a}^{\,\varepsilon}$ is real-valued almost everywhere. 

If $\omega>1$, we obtain unique extended continuous solution $\tilde{\phi}_{a}^{\,\varepsilon}$ on $(0,\infty)$ by \eqref{404_1}, 
since $h_\omega(x)$ is continuous on $(0,\infty)$ and $C^0(0,a) \subset L^2((0,a),dx)$. 
\end{proof}

In what follows, we denote by $\phi_{a}^{\,\varepsilon}$ the extended solution $\tilde{\phi}_{a}^{\,\varepsilon}$ for $a>1$ 
if no confusion arise. 
For $0<a \leq 1$, we take the convention that 
\[
\phi_a^+(x) = \phi_a^-(x) = h_\omega(ax) \quad x \in (0,\infty). 
\]
Obviously, these are continuous on $(0,\infty)$ if $\omega>1$.  
This convention is compatible with Lemma \ref{lem_405} and \ref{lem_406}, 
since integral equation \eqref{402} for $0<a \leq 1$ should be $X(x) = h_\omega(ax)$ by Lemma \ref{lem_402}, 
and $h_\omega(ax)=0$ on $(0,a)$ for $0<a\leq 1$. 
Then its extension $\tilde{\phi}_{a}^{\,\varepsilon}(x)$ to $(0,\infty)$ should be $h_\omega(ax)$ by \eqref{404_1}. 
%
%
\subsection{Differentiability of the solution}
%
%
In this part, we handle the differentiability of the extended solution $\phi_a^\varepsilon(x)$ with respect to $x$ and $a$ 
under the restriction to the parameter $\omega>1$. 
This restriction is required in order to obtain the continuity of the kernel $K(x,y)=h_\omega(xy)$. 
\medskip

Let $a>1$. 
The solution $\phi_{a}^{\,\varepsilon}$ of \eqref{402} 
is related to the kernel of the resolvent $(1 - \lambda {\mathsf H}_{\omega,a})^{-1}$ as follows. 
The kernel $K(x,y)=h_\omega(xy)$ of ${\mathsf H}_{\omega,a}$ is continuous on $[0,a] \times [0,a]$ by the assumption $\omega>1$. 
Then there exists a continuous function $R(x,y;\lambda;a)$ for $(x,y,\lambda) \in [0,a] \times [0,a] \times \C$ satisfying integral equations
\begin{equation}\label{403}
\aligned
R(x,y;\lambda;a) - \lambda \int_{0}^{a} K(x,z) R(z,y;\lambda;a) \, dz  
&=  K(x,y), \\ 
R(x,y;\lambda;a) - \lambda \int_{0}^{a} K(z,y) R(x,z;\lambda;a) \, dz 
&= K(x,y)
\endaligned
\end{equation}
(see Smithies~\cite[Chap. V]{MR0104991}, Lax~\cite[Chap. 24]{MR1892228}, for example). 
%
By taking $y=a$ and $\lambda=-\varepsilon$ in the first equation of \eqref{403}, we have 
\begin{equation*}
R(x,a;-\varepsilon;a) + {\varepsilon}\int_{0}^{a} h_\omega(xz) R(z,a;-\varepsilon;a) \, dz =  h_\omega(ax).
\end{equation*}
Therefore, we obtain 
\begin{equation} \label{Rker_1}
\phi_a^\varepsilon(x) = R(x,a;-\varepsilon;a)
\end{equation}
for $0< x < a$ by the uniqueness of solutions of \eqref{402}. 
In particular, we obtain the continuity of $\phi_a^\varepsilon(x)$ for $x$ again, and  
\begin{equation} \label{404}
\lim_{a \to 1^+} \phi_a^\varepsilon(a) = \lim_{a \to 1^+}R(a,a;-\varepsilon;a)=0
\end{equation}
by Lemma \ref{lem_402}. 
We investigate the differentiability of $\phi_a^\varepsilon(x)$ by using the resolvent kernel $R(x,y;\lambda;a)$. 
The following inequality is going to be used often. 
\medskip

\noindent
{\bf Hadamard's inequality} (see \cite[Theorem 5.2.1]{MR0104991}, for example).  
Let $A=(a_{ij})$ be a $n \times n$ complex matrix. If $|a_{ij}| \leq M$ $(1 \leq i,j \leq n)$, then 
$|\det A|^2 \leq n^n M^{2n}$. 
\medskip

We introduce the notation 
\[
K\begin{pmatrix} x_1,x_2,\cdots, x_n \\ y_1,y_2,\cdots, y_n \end{pmatrix}
= \det 
\begin{pmatrix} 
K(x_1,y_1) & K(x_1,y_2) & \cdots & K(x_1,y_n) \\
K(x_2,y_1) & K(x_2,y_2) & \cdots & K(x_2,y_n) \\
\vdots & \vdots & \ddots & \vdots \\ 
K(x_n,y_1) & K(x_n,y_2) & \cdots & K(x_n,y_n)
\end{pmatrix}
\]
as usual. 
The Fredholm determinant $d(\lambda;a)$ and the first Fredholm minor $D(x,y;\lambda;a)$ 
of the continuous kernel $K(x,y)$ on $\Omega_a=[0,a]\times[0,a]$ are defined as follows: 
\begin{align} \label{Fd_1}
d(\lambda;a) & = \sum_{n=0}^{\infty} d_n(a) \lambda^n,  \\ \label{FD_1}
D(x,y;\lambda;a) & = \sum_{n=0}^{\infty} D_n(x,y;a) \lambda^n, 
\end{align}
where $d_0(a)=1$, $D_0(x,y;a)=K(x,y)$ and 
\begin{equation} \label{Fd_2}
d_n(a) = \frac{(-1)^n}{n!}\int_{0}^{a} \cdots \int_{0}^{a} K\begin{pmatrix} x_1,x_2,\cdots, x_n \\ x_1,x_2,\cdots, x_n \end{pmatrix} dx_1 \dots dx_n
\quad (n\geq 1),
\end{equation}
\begin{equation} \label{FD_2}
D_n(x,y;a) = \frac{(-1)^n}{n!}\int_{0}^{a} \cdots \int_{0}^{a} K\begin{pmatrix} x,x_1,\cdots, x_n \\ y,x_1,\cdots, x_n \end{pmatrix} dx_1 \dots dx_n
\quad (n\geq 1). 
\end{equation}
The kernel $D_n(x,y;a)$ are clearly continuous in $(x,y)$. 
It is well-known that the series \eqref{FD_1} 
converges uniformly and absolutely in $(x,y,\lambda)$ 
when $\lambda$ is confined in a compact subset of $\C$, 
and $D(x,y;\lambda;a)$ is a continuous function on $\Omega_a$ for every $\lambda \in \C$ 
(see \cite[Theorem 5.3.1]{MR0104991}, for example).
If $d(\lambda;a)\not=0$, the resolvent kernel $R(x,y;\lambda;a)$ is given by 
\begin{equation} \label{Rker_2}
R(x,y;\lambda;a) = \frac{D(x,y;\lambda;a)}{d(\lambda;a)}. 
\end{equation}
Note that $d(\pm 1;a)\not=0$ for every $a>1$ when $K(x,y)=h_\omega(xy)$ and $\omega>1/2$ 
by Lemma \ref{lem_404} and Theorem 5.6.1 of  \cite{MR0104991}. 

\begin{lemma} \label{lem_ad1}
Let $\omega >1$, $a>1$ and $\varepsilon \in \{\pm 1\}$. 
Then the extended solution $\phi_a^\varepsilon(x)$ is continuously differentiable 
on $x \in [0,\infty) \setminus \{n/a\,|\,n\in \N\}$. 
\end{lemma}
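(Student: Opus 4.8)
The plan is to differentiate the defining relation \eqref{404_1},
\[
\phi_a^\varepsilon(x) = h_\omega(ax) - \varepsilon \int_0^a h_\omega(xy)\,\phi_a^\varepsilon(y)\,dy,
\]
and to show that each of the two terms on the right is continuously differentiable on $[0,\infty)\setminus\{n/a\mid n\in\N\}$. (The resolvent representation \eqref{Rker_1} already yields continuity of $\phi_a^\varepsilon$, but for differentiability the direct route through \eqref{404_1} is more transparent than differentiating the Fredholm minor series.) The first ingredient I would record is a regularity statement for $h_\omega$ itself, read off from the explicit formula \eqref{204}, the smoothness of $g_\omega$ on $(0,1)$, and the boundary asymptotic \eqref{201}: the function $h_\omega$ is $C^\infty$ on each interval $(N,N+1)$ and $\equiv 0$ on $(0,1)$; it is continuous across each positive integer $N$ precisely because $g_\omega(1^-)=0$ when $\omega>1$; and differentiating the $n=N$ summand while using the behaviour of $g_\omega$ near $1$ underlying \eqref{201} gives the one-sided bound $|h_\omega'(x)|\ll (x-N)^{\omega-2}$ as $x\to N^+$. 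Since $\omega>1$ forces $\omega-2>-1$, this singularity is locally integrable, so $h_\omega$ is locally absolutely continuous on $(0,\infty)$ with $h_\omega'\in L^1_{\mathrm{loc}}$.

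Granting this, the first term $h_\omega(ax)$ is $C^\infty$ away from the points where $ax$ meets a positive integer, i.e. away from $\{n/a\}$, with derivative $a\,h_\omega'(ax)$ there. For the integral term $I(x):=\int_0^a h_\omega(xy)\phi_a^\varepsilon(y)\,dy$, recall that $\phi_a^\varepsilon$ is continuous, hence bounded, on $[0,a]$ by Lemma~\ref{lem_405}. On each interval $(N/a,(N+1)/a)$ I would aim to prove that $I$ is $C^1$ with
\[
I'(x) = \int_0^a y\,h_\omega'(xy)\,\phi_a^\varepsilon(y)\,dy =: J(x).
\]
The derivative identity is cleanest to obtain by integrating $J$ and invoking Fubini together with the fundamental theorem of calculus: for $x_0,x$ in the interval, $\int_{x_0}^x J(t)\,dt = \int_0^a \phi_a^\varepsilon(y)\,y\int_{x_0}^x h_\omega'(ty)\,dt\,dy = \int_0^a \phi_a^\varepsilon(y)\,[h_\omega(xy)-h_\omega(x_0y)]\,dy = I(x)-I(x_0)$, where the inner integral uses the absolute continuity of $t\mapsto h_\omega(ty)$ and the interchange is justified by $\int_{x_0}^x\!\int_0^a|h_\omega'(ty)|\,dy\,dt\ll\int_1^{N+1}|h_\omega'(u)|\,du<\infty$. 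Thus $I'=J$ once $J$ is shown to be continuous.

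The main obstacle is exactly the continuity of $J$, since the singularities of $y\mapsto h_\omega'(xy)$ sit at the \emph{moving} points $y=n/x$ ($1\leq n<ax$), each of type $|y-n/x|^{\omega-2}$, so there is no single dominating function as $x$ varies. For $x$ confined to a compact subinterval of $(N/a,(N+1)/a)$ the relevant indices are fixed ($1\leq n\leq N$) and all the points $n/x$ stay in a compact subset of $(0,a)$, bounded away from the endpoint $y=a$. I would then prove $J(x)\to J(x_0)$ by an $\varepsilon/3$ localization: outside fixed $\eta$-neighbourhoods of the $n/x_0$ the integrand converges boundedly (dominated convergence, using that $xy$ stays away from all integers there), while on each $\eta$-neighbourhood the uniform estimate $\int_{|y-n/x_0|\leq\eta}|y-n/x|^{\omega-2}\,dy\ll\eta^{\omega-1}$ holds for $x$ near $x_0$ (here $\omega-1>0$ is used), making the singular contribution uniformly small. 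This is where $\omega>1$ enters twice, and it also pinpoints the excluded set: a singularity $y=n/x$ reaches the endpoint $y=a$ precisely when $x=n/a$, which is simultaneously where $h_\omega(ax)$ loses differentiability, so the argument breaks down there and nowhere else.

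Combining the two terms, $\phi_a^\varepsilon=h_\omega(a\,\cdot)-\varepsilon I$ is continuously differentiable on $[0,\infty)\setminus\{n/a\mid n\in\N\}$ (on $(0,1/a)$ it vanishes by Lemma~\ref{lem_405}, hence is trivially $C^1$), which is the assertion of the lemma.
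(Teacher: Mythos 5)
Your proof is correct, but it takes a genuinely different route from the paper's. The paper proves the lemma by passing through the resolvent representation \eqref{Rker_1}--\eqref{Rker_2}: it differentiates the Fredholm minor series \eqref{FD_1} term by term in $x$, writes $\frac{\partial}{\partial x}D_n(x,y;a)=d_n(a)\frac{\partial}{\partial x}K(x,y)+D_n^\dagger(x,y;a)$, and uses Hadamard's inequality to show that $\sum_n D_n^\dagger(x,y;a)\lambda^n$ converges uniformly, so that $D(x,y;\lambda;a)-d(\lambda;a)K(x,y)$ is $C^1$ in $x$. You instead observe that \eqref{404_1} expresses $\phi_a^\varepsilon$ on \emph{all} of $(0,\infty)$ in terms of data whose regularity is already known --- $h_\omega$ via \eqref{204}, \eqref{201}, and the continuity of $\phi_a^\varepsilon$ on $[0,a]$ from Lemma \ref{lem_405} --- and differentiate under the integral sign directly; the only delicate point, the continuity of $x\mapsto\int_0^a y\,h_\omega'(xy)\phi_a^\varepsilon(y)\,dy$ across the moving singularities $y=n/x$, is handled correctly by your localization argument using $\int_{|u|\leq\eta}|u|^{\omega-2}du\ll\eta^{\omega-1}$. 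Your route is more elementary and, on one point, more complete: the paper disposes of the integral term in \eqref{404_1} with the single remark that $|\frac{\partial}{\partial x}h_\omega(xy)\phi_a^\varepsilon(y)|$ is integrable on $[0,a]$, whereas integrability alone does not give continuous differentiability without precisely the uniform-smallness argument you supply. What the paper's heavier machinery buys is reuse: the same differentiated Fredholm-series estimates are needed again in Lemma \ref{lem_ad2} for the $a$-derivative, where your direct method would not apply as stated because $\phi_a^\varepsilon(y)$ inside the integral itself depends on $a$ and its $a$-regularity is exactly what is being established there.
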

\begin{proof}
By \eqref{404_1}, \eqref{Rker_1} and \eqref{Rker_2}, 
it is sufficient to prove that 
the Fredholm minor $D(x,y;\lambda;a)$ for the kernel $K(x,y)=h_\omega(xy)$ on $\Omega_a=[0,a]\times[0,a]$ 
is continuously differentiable on $x \in D_a:=[0,a] \setminus \{n/a\,|\,n\in \N\}$ for every fixed $y \in [0,a]$, 
since $h_\omega(ax)$ is continuously differentiable on $ x \in [0,\infty) \setminus \{n/a\,|\,n\in \N\}$, 
and $|\frac{\partial}{\partial x}h_\omega(xy)\phi_a^\varepsilon(y)|$ is integrable on $[0,a]$. 
To prove it, we modify the proof of Theorem 5.3.1 in \cite{MR0104991}. We have 
\[
\aligned
K&\begin{pmatrix} x,x_1,\cdots, x_n \\ y,x_1,\cdots, x_n \end{pmatrix}
= \det 
\begin{pmatrix} 
K(x,y) & K(x,x_1) & \cdots & K(x,x_n) \\
K(x_1,y) & K(x_1,x_1) & \cdots & K(x_1,x_n) \\
\vdots & \vdots & \ddots & \vdots \\ 
K(x_n,y) & K(x_n,x_1) & \cdots & K(x_n,x_n)
\end{pmatrix} \\
& = K(x,y) K\begin{pmatrix} x_1,\cdots, x_n \\ x_1,\cdots, x_n \end{pmatrix}
+ 
\det 
\begin{pmatrix} 
0 & K(x,x_1) & \cdots & K(x,x_n) \\
K(x_1,y) & K(x_1,x_1) & \cdots & K(x_1,x_n) \\
\vdots & \vdots & \ddots & \vdots \\ 
K(x_n,y) & K(x_n,x_1) & \cdots & K(x_n,x_n)
\end{pmatrix}.
\endaligned
\]
Therefore, by \eqref{Fd_2} and \eqref{FD_2}, we have  
\[
\aligned
\,&\frac{\partial}{\partial x} D_{n}(x,y;a)  = d_n(a) \frac{\partial}{\partial x}K(x,y) \\
&+
\frac{(-1)^n}{n!}\int_{0}^{a}\cdots\int_{0}^{a} 
\det 
\begin{pmatrix} 
0 & \frac{\partial}{\partial x}K(x,x_1) & \cdots & \frac{\partial}{\partial x}K(x,x_n) \\
K(x_1,y) & K(x_1,x_1) & \cdots & K(x_1,x_n) \\
\vdots & \vdots & \ddots & \vdots \\ 
K(x_n,y) & K(x_n,x_1) & \cdots & K(x_n,x_n)
\end{pmatrix} dx_1 \dots dx_n \\
& = d_n(a) \frac{\partial}{\partial x}K(x,y) + D_n^\dagger(x,y;a),
\endaligned
\]
say. Then, we obtain 
\[
\frac{\partial}{\partial x}D(x,y;\lambda;a) = d(\lambda;a)\frac{\partial}{\partial x}K(x,y) 
+ \sum_{n=0}^{\infty} D_n^\dagger(x,y;a) \lambda^n 
\] 
by \eqref{FD_1}. The first term on the right-hand side is continuous on $x \in D_a$. 
Therefore, in order to prove the existence and the continuity of $\frac{\partial}{\partial x}D(x,y;\lambda;a)$ on $x \in D_a$, 
it is sufficient to prove that the series on the right-hand side 
converges uniformly on every compact subset in $D_a$. 
Put 
\[
M_1(a) = a \sup_{(x,y) \in \Omega_a} |K(x,y)|, \qquad M_2(a) =  \sup_{x \in [0,a]} \int_{0}^{a} \left|\frac{\partial}{\partial x}K(x,y)\right|\,dy. 
\]
The second constant $M_2(a)$ is well-defined, since 
$\int_{0}^{a} \left|\frac{\partial}{\partial x}K(x,y)\right| dy = \int_{0}^{a} |h_\omega^\prime(xy)|\,ydy$
is continuous on $[0,a]$ (by $\omega>1$).  
Using the row expansion of the determinant and Hadamard's inequality, we have 
\[
\left| 
\det 
\begin{pmatrix} 
0 & \frac{\partial}{\partial x}K(x,x_1) & \cdots & \frac{\partial}{\partial x}K(x,x_n) \\
K(x_1,y) & K(x_1,x_1) & \cdots & K(x_1,x_n) \\
\vdots & \vdots & \ddots & \vdots \\ 
K(x_n,y) & K(x_n,x_1) & \cdots & K(x_n,x_n)
\end{pmatrix} \right|
\leq n^{\frac{1}{2}n} \left(\frac{M_1(a)}{a}\right)^n \sum_{j=1}^{n}\left| \frac{\partial}{\partial x}K(x,x_j) \right|.
\]
Therefore, we obtain
\[
\aligned
| D_n^\dagger(x,y;a) | 
& \leq 
\frac{n^{\frac{1}{2}n}}{n!}  \left(\frac{M_1(a)}{a}\right)^n\int_{0}^{a}\cdots\int_{0}^{a} 
\sum_{j=1}^{n}\left| \frac{\partial}{\partial x}K(x,x_j) \right|dx_1 \dots dx_n \\
& = \frac{n^{\frac{1}{2}n}}{a n!} M_1(a)^n 
\sum_{j=1}^{n} \int_{0}^{a} \left| \frac{\partial}{\partial x}K(x,x_j) \right|dx_j
 \leq \frac{M_2(a)}{a}\frac{n^{\frac{1}{2}n}M_1(a)^n}{(n-1)!}.
\endaligned
\]
Therefore, the series $\sum_{n=0}^{\infty} D_n^\dagger(x,y;a) \lambda^n$ converges uniformly and absolutely  
in $(x,y,\lambda) \in \Omega_a \times \C$, 
when $\lambda$ is contained in a compact subset of $\C$.  
Hence, for fixed $y \in [0,a]$, $D(x,y;\lambda;a)-d(\lambda;a)K(x,y)$
is a continuously differentiable function on $[0,a]$ such that 
\[
\frac{\partial}{\partial x}\Bigl(D(x,y;\lambda;a)-d(\lambda;a)K(x,y) \Bigr) = \sum_{n=0}^{\infty} D_n^\dagger(x,y)\lambda^n.
\]
We complete the proof of the lemma. 
\end{proof}

\begin{lemma} \label{lem_ad2}
Let $\omega >1$ and $\varepsilon \in \{\pm 1\}$. 
Then the extended solution $\phi_a^\varepsilon(x)$ is continuous in $a \in (1,\infty)$ for every fixed $x>0$. 
In addition, it is continuously differentiable with respect to $a$ 
in $(1,\infty) \setminus \{n/x,\sqrt{n}\,|\,n \in \N\}$. 
\end{lemma}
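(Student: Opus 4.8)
The plan is to treat the two assertions separately, in both cases exploiting the explicit resolvent description $\phi_a^\varepsilon(x)=R(x,a;-\varepsilon;a)=D(x,a;-\varepsilon;a)/d(-\varepsilon;a)$ from \eqref{Rker_1}--\eqref{Rker_2}, together with the invertibility of $1+\varepsilon{\mathsf H}_{\omega,a}$ furnished by Lemma \ref{lem_404}. For the continuity statement I would first record that, since $\omega>1$, the kernel $h_\omega$ is continuous on $(0,\infty)$ by \eqref{201}, so every matrix entry $K(\,\cdot\,,a)=h_\omega(\,\cdot\, a)$ and the corner value $K(a,a)=h_\omega(a^2)$ depend continuously on $a$. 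Running the Hadamard estimate of Lemma \ref{lem_ad1} with $a$ as an additional parameter shows that the Fredholm series \eqref{Fd_1}--\eqref{FD_1} converge uniformly on compacta jointly in $(x,y,a)$, so $d(-\varepsilon;a)$ and $D(x,y;-\varepsilon;a)$ are jointly continuous; evaluating the latter at $y=a$ and dividing by $d(-\varepsilon;a)\neq0$ gives continuity of $a\mapsto\phi_a^\varepsilon(x)$ on all of $(1,\infty)$.

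For the differentiability statement I would differentiate the extension identity \eqref{404_1} in $a$. Writing $\psi_a:=\partial_a\phi_a^\varepsilon$ and applying Leibniz' rule to the moving upper limit, one is led to the integral equation
\begin{equation*}
\psi_a(x)+\varepsilon\int_0^a h_\omega(xy)\,\psi_a(y)\,dy
= x\,h_\omega'(ax)-\varepsilon\,h_\omega(ax)\,\phi_a^\varepsilon(a)=:b_a(x),
\end{equation*}
i.e. equation \eqref{402} with the new source $b_a$. Since $\pm1$ lie in the resolvent set of ${\mathsf H}_{\omega,a}$, this has the unique solution $\psi_a(x)=b_a(x)-\varepsilon\int_0^a R(x,y;-\varepsilon;a)\,b_a(y)\,dy$, and the point is to show that the $\psi_a$ so produced is genuinely the derivative and depends continuously on $a$. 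The continuity of the resolvent kernel $R$ in $a$ is already available from the first part, so what remains is to control the source $b_a$ and the integral against it.

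Here the two exceptional families enter. The explicit term $x\,h_\omega'(ax)$ is finite and continuous in $a$ precisely away from $ax\in\N$: by \eqref{201} and \eqref{204} the function $h_\omega$ behaves like $C\,(t-n)^{\omega-1}$ as $t\to n^+$, so $h_\omega'$ has a one-sided blow-up of order $(t-n)^{\omega-2}$ at each integer $n$, and $h_\omega'(ax)$ ceases to be continuous exactly at $a=n/x$; this yields the family $\{n/x\mid n\in\N\}$. The family $\{\sqrt n\}$ comes from the interaction of this same singularity with the moving endpoint. Differentiating the integral in \eqref{404_1} produces, besides the regular part, contributions in which $h_\omega'$ is evaluated at arguments tending to $a^2$; concretely, in the resolvent term $\int_0^a R(x,y;-\varepsilon;a)\,y\,h_\omega'(ay)\,dy$ the interior singularity $y=n/a$ of the integrand reaches the boundary $y=a$ exactly when $a^2=n$. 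For $\omega>1$ each such singularity is $L^1$, so these integrals stay continuous in $a$ (consistent with the continuity of $\phi_a^\varepsilon$ proved above) and $\psi_a$ exists off $\{n/x\}$; but the associated endpoint term behaves like $h_\omega'(a^2)\sim(a^2-n)^{\omega-2}$, which diverges as $a\to\sqrt n$, and this is what breaks continuous differentiability at $a=\sqrt n$. Away from $\{n/x,\sqrt n\}$ both $b_a(x)$ and the resolvent integral are continuous in $a$, giving continuous differentiability there.

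The main obstacle will be the rigorous bookkeeping at the moving boundary: one must justify the Leibniz differentiation of \eqref{404_1} together with term-by-term differentiation of the series \eqref{Fd_1}--\eqref{FD_1} in the presence of the unbounded but $L^1$ singularities of $h_\omega'$, and then prove that the resolvent integral $\int_0^a R(x,y;-\varepsilon;a)\,y\,h_\omega'(ay)\,dy$ is continuous in $a$ off $\{\sqrt n\}$. I expect this to follow from a dominated-convergence argument anchored on the Hadamard bounds of Lemma \ref{lem_ad1}, splitting the $y$-integral near each singularity $y=n/a$ and estimating the shrinking endpoint layer as $a\to\sqrt n$; the continuity in $x$ already established in Lemma \ref{lem_ad1}, together with its transpose in $y$, then handles the remaining, non-singular, contributions.
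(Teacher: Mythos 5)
Your continuity argument coincides with the paper's: joint continuity in $(x,y,a)$ of the Fredholm series \eqref{Fd_1}--\eqref{FD_1} via Hadamard's inequality, non-vanishing of $d(-\varepsilon;a)$, and \eqref{404_1} to pass to $x>a$. For the differentiability you take a genuinely different route. You differentiate the integral equation \eqref{405} in $a$ and solve the resulting Fredholm equation for the candidate derivative $\psi_a$ by the resolvent (this is in fact exactly the manipulation the paper performs later, in \eqref{406}, to derive the differential system). The paper instead differentiates $R(x,a;\lambda;a)=D(x,a;\lambda;a)/d(\lambda;a)$ directly, splitting $\partial_a D(x,a;\lambda;a)=D_y(x,a;\lambda;a)+D_a(x,a;\lambda;a)$ and controlling each piece by term-by-term differentiation of the explicit series with Hadamard bounds. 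Your identification of the exceptional families is consistent with the paper's: $\{n/x\}$ from the corner term $x\,h_\omega'(ax)$, and $\{\sqrt n\}$ from the singularity of the differentiated kernel reaching the evaluation point $y=a$.

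There is, however, a gap at the step you yourself flag as the main obstacle, and the fix you propose does not close it. Deriving the integral equation for $\psi_a=\partial_a\phi_a^\varepsilon$ by Leibniz' rule presupposes that $\partial_a\phi_a^\varepsilon(y)$ exists for $y\in(0,a)$ and is integrable there --- which is precisely the assertion to be proved. Solving the formally derived equation produces a well-defined candidate $\psi_a=b_a-\varepsilon\int_0^a R(\cdot,y;-\varepsilon;a)b_a(y)\,dy$, but a dominated-convergence argument only establishes continuity of this candidate in $a$; it does not show the candidate equals the derivative. To close the circle you would need either to verify $\phi_{a'}^\varepsilon(x)-\phi_a^\varepsilon(x)=\int_a^{a'}\psi_s(x)\,ds$ directly, or to do what the paper does: differentiate the uniformly convergent series for $d(\lambda;a)$ and $D(x,y;\lambda;a)$ in $a$ termwise, which never assumes existence of $\partial_a\phi_a^\varepsilon$ in advance. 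A secondary quibble: your mechanism for the loss of regularity at $a=\sqrt n$ (a divergent ``endpoint term $h_\omega'(a^2)$'') is not quite right, since $\int_0^{a^2}t\,h_\omega'(t)\,dt$ is continuous across $a^2=n$ because the singularity is $L^1$ for $\omega>1$; but as the lemma only asserts differentiability off $\{n/x,\sqrt n\,|\,n\in\N\}$, excluding these points conservatively is harmless.
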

\begin{proof} The continuity in $a$ follows from \eqref{404_1} and \eqref{Rker_1}. 
Before the proof of the differentiability, we note that 
$d(\lambda,a)$ is continuous in $a$. In fact, we have 
\[
|d_n(a)| \leq \frac{n^{\frac{1}{2}n}M_1(a)^n}{n!} \quad \left( M_1(a) = a \sup_{(x,y) \in \Omega_a} |K(x,y)| \right)
\]
by definition \eqref{Fd_2} and Hadamard's inequality, 
and hence the series of \eqref{Fd_1}
converges absolutely and uniformly on a compact subset of $(\lambda,a) \in \C \times (1,\infty)$. 

Let $\lambda \in \C$ such that $d(\lambda;a)\not=0$ for every $a>1$. We have  
\[
\frac{\partial}{\partial a}R(x,a;\lambda;a)
=\frac{\frac{\partial}{\partial a}D(x,a;\lambda;a)d(\lambda;a)-D(x,a;\lambda;a)\frac{\partial}{\partial a}d(\lambda;a)}{d(\lambda;a)^2}
\]
by \eqref{Rker_2}. 
Therefore, in order to prove the lemma, we need (i) the existence and the continuity of $\frac{\partial}{\partial a}d(\lambda;a)$ 
and (ii) the existence, the continuity and the integrability of $\frac{\partial}{\partial a}D(x,a;\lambda;a)$ 
by \eqref{404_1}, \eqref{Rker_1} and 
\[
\frac{\partial}{\partial a}\int_{0}^{a} h_\omega(xy)\phi_a^\varepsilon(y)\,dy
= h_\omega(ax)\phi_a^\varepsilon(a)
+\int_{0}^{a} h_\omega(xy) \frac{\partial}{\partial a}\phi_a^\varepsilon(y)\,dy.
\] 
We prove (i). By definition \eqref{Fd_1} and \eqref{Fd_2}, we have 
\[
\aligned
\frac{\partial}{\partial a} d(\lambda;a) 
& = - \lambda K(a,a) 
 + \sum_{n=2}^{\infty}
\frac{(-\lambda)^n}{n!}
\left\{
\int_{0}^{a} \cdots \int_{0}^{a} K\begin{pmatrix} a,x_2,\cdots, x_n \\ a,x_2,\cdots, x_n \end{pmatrix} dx_2 \dots dx_n \right. \\
& + \sum_{k=2}^{n-1}
\int_{0}^{a} \cdots \int_{0}^{a} K\begin{pmatrix} x_1,\cdots,x_{k-1},a,x_{k+1}, \cdots, x_n \\ x_1,\cdots,x_{k-1},a,x_{k+1}, \cdots, x_n \end{pmatrix} dx_1 \dots dx_{k-1}dx_{k+1} \dots dx_n \\
& \left. 
+ \int_{0}^{a} \cdots \int_{0}^{a} K\begin{pmatrix} x_1,x_2,\cdots, x_{n-1}, a \\ x_1,x_2,\cdots, x_{n-1}, a \end{pmatrix} dx_1 \dots dx_{n-1}
\right\}.
\endaligned
\]  
Clearly, each term in the series is continuous in $a$, since $K(x,y)$ is continuous. 
By using Hadamard's inequality, 
\[
\aligned
\left| \frac{\partial}{\partial a} d(\lambda;a) \right| 
& \leq \sum_{n=1}^{\infty} 
\frac{|\lambda|^n}{n!} n^{\frac{1}{2}n} \left(\frac{M_1(a)}{a}\right)^n n a^{n-1}
= \frac{1}{a}\sum_{n=1}^{\infty} 
\frac{n^{\frac{1}{2}n}}{(n-1)!}  (|\lambda|M_1(a))^n. 
\endaligned
\]
The series on the right-hand side converges uniformly 
on a compact subset of $(\lambda,a) \in \C \times [0,\infty)$.  
Hence $d(\lambda;a)$ is continuously differentiable for $a$. 

Successively, we prove (ii).  we have 
\[
\frac{\partial}{\partial a}D(x,a;\lambda;a)
= D_y(x,a;\lambda;a) + D_a(x,a;\lambda;a),
\]
where $D_y$ (resp. $D_a$) means the partial derivative with respect to the second (resp. the fourth) variable. 
We find that $D(x,y;\lambda;a)$ is continuously differentiable with respect to $y \in [0,\infty)\setminus\{n/a\,|\,n \in \N\}$, 
and $D_y(x,y;\lambda;a)$ is a continuous function on $(x,y) \in [0,\infty) \times ([0,\infty)\setminus\{n/a\,|\,n \in \N\})$  
by a way similar to the proof of Lemma \ref{lem_ad1}. 
Thus $D_y(x,a;\lambda;a)$ is continuous on $a \in (1,\infty) \setminus \{\sqrt{n}\,|\,n \in \N\}$ for fixed $x$, 
and $|D_y(x,a;\lambda;a)|$ is integrable on $[0,a]$ with respect to $x$. 
On the other hand, by definition \eqref{FD_1} and \eqref{FD_2},  
\[
\aligned
\frac{\partial}{\partial a}&D(x,y;\lambda;a) 
 = - \lambda K(a,a)K(x,y) + \lambda K(x,a)K(a,y) \\
& + \sum_{n=2}^{\infty}
\frac{(-\lambda)^n}{n!}
\left\{
\int_{0}^{a} \cdots \int_{0}^{a} K\begin{pmatrix} x,a,x_2,\cdots, x_n \\ y,a,x_2,\cdots, x_n \end{pmatrix} dx_2 \dots dx_n \right. \\
& + \sum_{k=2}^{n-1}
\int_{0}^{a} \cdots \int_{0}^{a} K\begin{pmatrix} x,x_1,\cdots,x_{k-1},a,x_{k+1}, \cdots, x_n \\ y,x_1,\cdots,x_{k-1},a,x_{k+1}, \cdots, x_n \end{pmatrix} dx_1 \dots dx_{k-1}dx_{k+1} \dots dx_n \\
& \left. + \int_{0}^{a} \cdots \int_{0}^{a} K\begin{pmatrix} x,x_1,\cdots, x_{n-1}, a \\ y,x_1,\cdots, x_{n-1}, a \end{pmatrix} dx_1 \dots dx_{n-1}
\right\}.
\endaligned
\]  
Clearly, each term in the series is continuous in $(x,y,a)$, since $K(x,y)$ is continuous. 
By the row expansion of the determinant and Hadamard's inequality, we obtain  
\[
\aligned
\left|\frac{\partial}{\partial a}D(x,y;\lambda;a)  \right| 
& \leq \sum_{n=1}^{\infty}
\frac{|\lambda|^n}{n!} n^{\frac{1}{2}n} \left(\frac{M_1(a)}{a}\right)^n\int_{0}^{a}\cdots\int_{0}^{a} 
\sum_{j=1}^{n}\left| K(x,x_j) \right|dx_1 \dots dx_{n-1} \\
& = \frac{1}{a^2}\int_{0}^{a} \left|K(x,x_1) \right|dx_1 \sum_{n=1}^{\infty}\frac{n^{\frac{1}{2}n}}{(n-1)!} (|\lambda|M_1(a))^n .
\endaligned
\]
when $0 \leq y \leq a$. The series on the right-hand side converges uniformly 
on a compact subset of $(\lambda,a) \in \C \times (1,\infty)$. 
Thus $D_a(x,a;\lambda;a)$ is continuous in $a$. 
In addition, the right-hand side shows that $|D_a(x,a;\lambda;a)|$ is integrable on $[0,a]$ with respect to $x$. 

Hence $\frac{\partial}{\partial a}R(x,a;\lambda;a)$ is continuous on $a \in (1,\infty) \setminus \{\sqrt{n}\,|\,n \in \N\}$ 
for fixed $x$, and $|\frac{\partial}{\partial a}R(x,a;\lambda;a)|$ is integrable on $[0,a]$ with respect to $x$. 
As a consequence we obtain the lemma by \eqref{404_1} and \eqref{Rker_1}. 
\end{proof}
%
%
\subsection{The first order differential system}
%
%
As in the previous section, we assume that $\omega>1$. 
Then $\Theta_\omega$ is an inner function in $\C^+$, the kernel $h_\omega(xy)$ of 
${\mathsf H}_{\omega}$ or ${\mathsf H}_{\omega,a}$ is continuous, 
and $\phi_a^{\,\varepsilon}(x)$ is continuously differentiable 
with respect to $x$ and $a$ outside loci $ax = k$ ($k \in \N$). 
Under this situation, we derive a first order differential system arising from $\phi_a^{\,\varepsilon}$ ($a>1,\,\varepsilon\in\{\pm 1\}$) 
start from 
\begin{equation} \label{405}
\phi_a^{\,\varepsilon}(x) + \varepsilon \int_{0}^{a} h_\omega(xy) \phi_a^{\,\varepsilon}(y) \, dy = h_\omega(ax). 
\end{equation}

Firstly, we operate $a\frac{\partial }{\partial a}$ on both side of \eqref{405}. Then,  
\begin{equation*}
a \frac{\partial}{\partial a} \phi_a^{\,\varepsilon}(x) 
+ \varepsilon a \frac{\partial}{\partial a}\int_{0}^{a} h_\omega(xy) \phi_a^{\,\varepsilon}(y) \, dy 
= a \frac{\partial}{\partial a} h_\omega(ax); 
\end{equation*}
\begin{equation*}
a \frac{\partial}{\partial a} \phi_a^{\,\varepsilon}(x) 
+ \varepsilon a \phi_a^{\,\varepsilon}(a)h_\omega(ax) + \varepsilon \int_{0}^{a} h_\omega(xy) 
a \frac{\partial}{\partial a} \phi_a^{\,\varepsilon}(y) \, dy 
= a \frac{\partial}{\partial a} h_\omega(ax);
\end{equation*}
\begin{equation} \label{406}
\aligned
a\frac{\partial}{\partial a} \phi_a^{\,\varepsilon}(x) 
+ \varepsilon \int_{0}^{a} h_\omega(xy)  a \frac{\partial}{\partial a} \phi_a^{\,\varepsilon}(y) \, dy 
&= - \varepsilon\, a \, \phi_a^{\,\varepsilon}(a) h_\omega(ax)+ a \frac{\partial}{\partial a} h_\omega(ax).
\endaligned
\end{equation}

Secondly, we operate $x\frac{\partial }{\partial x}$ on both side of \eqref{405}:  
\begin{equation*}
x\frac{\partial}{\partial x}\phi_a^{\,\varepsilon}(x) 
+ \varepsilon x \frac{\partial}{\partial x}\int_{0}^{a} h_\omega(xy) \phi_a^{\,\varepsilon}(y) \, dy 
= x\frac{\partial}{\partial x}h_\omega(ax) = a \frac{\partial}{\partial a} h_\omega(ax).
\end{equation*}
Using the identity $x\frac{\partial }{\partial x}h_\omega(xy) = y\frac{\partial }{\partial y}h_\omega(xy)$ 
and then applying integration by parts to the integral of the left-hand side, we have 
\begin{equation*}
x\frac{\partial}{\partial x}\phi_a^{\,\varepsilon}(x) 
+ \varepsilon\, a \, \phi_a^{\,\varepsilon}(a) h_\omega(ax) 
- \varepsilon \int_{0}^{a} h_\omega(xy) \frac{\partial}{\partial y} \Bigl( y \phi_a^{\,\varepsilon}(y) \Bigr) \, dy 
= a \frac{\partial}{\partial a} h_\omega(ax). 
\end{equation*}
Putting $\delta_x=x \frac{\partial }{\partial x} + \frac{1}{2}=\frac{\partial}{\partial x} x - \frac{1}{2}$, we obtain
\begin{equation} \label{407}
\aligned
\delta_x\phi_a^{\,\varepsilon}(x) - \frac{1}{2}\phi_a^{\,\varepsilon}(x) - \varepsilon \int_{0}^{a} h_\omega(xy) & 
\Bigl( \delta_y  \phi_a^{\,\varepsilon}(y) + \frac{1}{2} \phi_a^{\,\varepsilon}(y)\Bigr) \, dy \\
& = - \varepsilon \, a \, \phi_a^{\,\varepsilon}(a) h_\omega(ax) +  a \frac{\partial}{\partial a} h_\omega(ax).
\endaligned
\end{equation}
Next, we rewrite the left-hand side of \eqref{407} as follows 
by using \eqref{405} for the second term of the left-hand side:   
\begin{equation} \label{408}
\aligned
\delta_x \phi_a^{\,\varepsilon}(x) - \frac{1}{2}\Bigl( h_\omega(ax) 
- \varepsilon \int_{0}^{a} & h_\omega(xy) \phi_a^{\,\varepsilon}(y) \, dy \Bigr) 
 - \varepsilon \int_{0}^{a} h_\omega(xy) \Bigl( \delta_y  \phi_a^{\,\varepsilon}(y) + \frac{1}{2} \phi_a^{\,\varepsilon}(y)\Bigr) \, dy \\ 
& =\delta_x \phi_a^{\,\varepsilon}(x) - \frac{1}{2} h_\omega(ax) 
- \varepsilon \int_{0}^{a} h_\omega(xy) \delta_y  \phi_a^{\,\varepsilon}(y) \, dy.
\endaligned
\end{equation}
Substituting the right-hand side of \eqref{408} for the left-hand side of \eqref{407} 
and rearranging, we obtain
\begin{equation} \label{409}
\delta_x \phi_a^{\,\varepsilon}(x) - \varepsilon \int_{0}^{a} h_\omega(xy) \delta_y  \phi_a^{\,\varepsilon}(y) \, dy
= \left( \frac{1}{2} - \varepsilon a \phi_a^{\,\varepsilon}(a) \right) h_\omega(ax) +  a \frac{\partial}{\partial a} h_\omega(ax).
\end{equation}
Subtracting \eqref{409} with choice $-\varepsilon$ from \eqref{406} with $\varepsilon$, we obtain
\begin{equation} \label{410}
\aligned
\left\{ a \frac{\partial}{\partial a} \phi_a^{\,\varepsilon}(x)  - \delta_x \phi_a^{-\varepsilon}(x) \right\} 
+ \varepsilon \int_{0}^{a} & h_\omega(xy)  \left\{ a \frac{\partial}{\partial a} 
\phi_a^{\,\varepsilon}(y) - \delta_y  \phi_a^{-\varepsilon}(y) \right\} \, dy \\
& \qquad \qquad  = - \left( \frac{1}{2} + \varepsilon \mu(a) \right) h_\omega(ax), 
\endaligned
\end{equation}
where  
\begin{equation} \label{411}
\mu(a) = a \phi_a^+(a) +  a \phi_a^{-}(a).
\end{equation}
By \eqref{404}, Lemma \ref{lem_ad1} and \ref{lem_ad2}, the function $\mu(a)$ is continuous on $(1,\infty)$, 
which satisfies $\lim_{a\to 1^+}\mu(a)=0$, 
and is continuously differentiable on $(1,\infty) \setminus \{\sqrt{n}\,|\, n \in N\}$. 

Equality \eqref{410} shows that 
$a \frac{\partial}{\partial a} \phi_a^{\varepsilon}(x)  - \delta_x \phi_a^{-\varepsilon}(x)$ 
is a continuous solution of \eqref{405}. 
Hence, by comparing \eqref{405} with \eqref{410}, we obtain 
\begin{equation} \label{412}
a \frac{\partial}{\partial a} \phi_a^{\,\varepsilon}(x) - \delta_x \phi_a^{-\varepsilon}(x) 
 = - \left( \frac{1}{2} + \varepsilon \mu(a)  \right) \phi_a^{\,\varepsilon}(x) 
\quad (\varepsilon \in \{\pm 1\})
\end{equation}
by the uniqueness of solutions (Lemmas \ref{lem_405} and \ref{lem_406}). 
We use \eqref{412} in the  form
\begin{equation} \label{413}
\left( a \frac{\partial}{\partial a} +  \frac{1}{2} + \varepsilon\mu(a) \right) \phi_a^{\,\varepsilon}(x)
 = \delta_x \phi_a^{-\varepsilon}(x) \quad (\varepsilon \in \{\pm 1\}). 
\end{equation}
\smallskip

Now we introduce two special functions  
\begin{equation} \label{413_2}
\aligned
\tilde{A}_a(z) 
& := \frac{a^{iz}}{2} + \frac{\sqrt{a}}{2} \int_{a}^{\infty} \phi_a^{+}(x) \, x^{\frac{1}{2}+iz} \, \frac{dx}{x}
 = \frac{a^{iz}}{2} + \frac{\sqrt{a}}{2}\, {\mathsf F}_{1/2}(1-{\mathsf P}_{a})\phi_a^+(z), \\
-i\tilde{B}_a(z) 
&:= \frac{a^{iz}}{2} - \frac{\sqrt{a}}{2} \int_{a}^{\infty} \phi_a^{-}(x) \, x^{\frac{1}{2}+iz} \, \frac{dx}{x}
  = \frac{a^{iz}}{2} - \frac{\sqrt{a}}{2}\, {\mathsf F}_{1/2}(1-{\mathsf P}_{a})\phi_a^-(z)  
\endaligned
\end{equation}
for $\Im(z) \gg 0$ and $a>1$. These functions are defined as analytic functions for large $\Im(z)>0$ by integrals, since 
$\phi_a^{\pm}$ are continuous and have at most polynomial growth at $+\infty$ by \eqref{405} and the rough estimate
\begin{equation} \label{est_h}
\aligned
h_\omega(x) 
& = \frac{1}{2\pi}\int_{-U+ic}^{U+ic} \Theta_{\omega}(z) \, x^{-\frac{1}{2}-iz} \, dz +O(x^{c-\frac{1}{2}}U^{1-\omega}) 
\quad (c>1/2+\omega) \\
& = O(x^{c-\frac{1}{2}}U) + O(x^{c-\frac{1}{2}}U^{1-\omega}) = O(x^{\omega+\epsilon}).
\endaligned
\end{equation}
As shown below, $\tilde{A}_a $ and $\tilde{B}_a$ are analytically continuable to meromorphic functions in $\C$. 
We put it off a little and derive a differential system satisfied by $\tilde{A}_a $ and $\tilde{B}_a$.
Using \eqref{413}, we have 
\begin{equation*}
\aligned
\,&\left( a \frac{\partial}{\partial a}  + \mu(a) \right)\tilde{A}_a (z) 
 =\left( iz  + \mu(a) \right) \frac{a^{iz}}{2} 
+ \left( a \frac{\partial}{\partial a}  + \mu(a) \right)\frac{\sqrt{a}}{2} \int_{a}^{\infty} \phi_a^{+}(x) \, x^{\frac{1}{2}+iz} \, \frac{dx}{x} \\
& \quad  =\left( iz  + \mu(a) \right) \frac{a^{iz}}{2} - \frac{\sqrt{a}}{2} \phi_a^{+}(a) \, a^{\frac{1}{2}+iz}
 + \frac{\sqrt{a}}{2} \int_{a}^{\infty} \left( a \frac{\partial}{\partial a} + \frac{1}{2} + \mu(a) \right) \phi_a^{+}(x) \, x^{\frac{1}{2}+iz} \, \frac{dx}{x} \\
& \quad =\left( iz  + \mu(a) \right) \frac{a^{iz}}{2} 
- \frac{\sqrt{a}}{2} \phi_a^{+}(a) \, a^{\frac{1}{2}+iz} +   \frac{\sqrt{a}}{2} \int_{a}^{\infty} 
\left(x \frac{\partial}{\partial x} + \frac{1}{2}\right)\phi_a^{-}(x) \, x^{\frac{1}{2}+iz} \, \frac{dx}{x} \\
& \quad = iz   \frac{a^{iz}}{2} - iz \frac{\sqrt{a}}{2} \int_{a}^{\infty} \phi_a^{-}(x) \, x^{\frac{1}{2}+iz} \, \frac{dx}{x} 
 = z \, \tilde{B}_a (z)
\endaligned
\end{equation*}
for large $\Im(z)>0$, and then it holds for all $z \in \C$ by meromorphic continuation (below). 
We obtain a similar formula for $( a \frac{\partial}{\partial a}  + \mu(a))\tilde{B}_a $. 
As a result, we obtain the first order differential system 
\begin{equation} \label{414}
- \begin{bmatrix}a \frac{\partial}{\partial a}+  \mu(a) & 0 \\  0 &a \frac{\partial}{\partial a} -\mu(a) \end{bmatrix}
\begin{bmatrix} \tilde{A}_a (z) \\ \tilde{B}_a (z) \end{bmatrix}
= z \begin{bmatrix} 0 & -1 \\  1 & 0 \end{bmatrix}\begin{bmatrix} \tilde{A}_a(z) \\ \tilde{B}_a (z) \end{bmatrix} \quad (a>1).
\end{equation}
We extend the system to $a>0$ by taking the convention that 
\begin{equation} \label{415_1}
\mu(a) =0
\end{equation}
and 
\begin{equation} \label{415}
\aligned
\tilde{A}_a (z) & 
= \frac{a^{iz}}{2} + \frac{\sqrt{a}}{2}\int_{1/a}^{\infty} h_\omega(ax) \, x^{\frac{1}{2}+iz} \, \frac{dx}{x}
= \frac{1}{2}\Bigl( a^{iz} + \Theta_\omega(z) a^{-iz} \Bigr), \\
-i \tilde{B}_a (z) &= \frac{a^{iz}}{2} - \frac{\sqrt{a}}{2}\int_{1/a}^{\infty} h_\omega(ax) \, x^{\frac{1}{2}+iz} \, \frac{dx}{x}
= \frac{1}{2}\Bigl( a^{iz} - \Theta_\omega(z) a^{-iz} \Bigr) 
\endaligned
\end{equation}
for $0<a \leq 1$. Actually the convention \eqref{415_1} and \eqref{415} for $0<a\leq 1$ is compatible with Lemma \ref{lem_402} 
and the convention mentioned in the end of Section 4.1. 

For $a>0$, we define 
\begin{equation}\label{416}
\aligned
A_a(z) & = m(a) \, \xi\left(\frac{1}{2}+\omega-iz\right)\tilde{A}_a (z), \\ 
B_a(z) &= \frac{1}{m(a)}\,\xi\left(\frac{1}{2}+\omega-iz\right) \tilde{B}_a (z) 
\endaligned
\end{equation}
with  
\begin{equation} \label{417}
m(a) = \exp\left( \int_{1}^{a} \mu(b) \frac{db}{b}\right) \quad \left(\mu(a) = a \frac{d}{d a}\log m(a) \right)
\end{equation}
under  \eqref{415_1} and \eqref{415}. Note that $m(a)$ is real-valued by its definition. 
Then we can verify that system \eqref{414} implies that $(A_a,B_a)$ satisfies the canonical system
\begin{equation} \label{418}
-a \frac{\partial}{\partial a}\begin{bmatrix} X_a(z) \\ Y_a(z) \end{bmatrix}
= z \begin{bmatrix} 0 & -1 \\  1 & 0 \end{bmatrix}\begin{bmatrix} m(a)^{-2} & 0 \\ 0 & m(a)^{2} \end{bmatrix}
\begin{bmatrix} X_a(z) \\ Y_a(z) \end{bmatrix} \quad (0<a<\infty)
\end{equation}
by elementary ways. It is concluded that \eqref{418} is the canonical system of Theorem \ref{thm_4} 
if formula 
\begin{equation*}
m(a)=\frac{\det(1+{\mathsf H}_{\omega,a})}{\det(1-{\mathsf H}_{\omega,a})}
\end{equation*} 
is proved for $a>1$, since $\frac{\det(1+{\mathsf H}_{\omega,a})}{\det(1-{\mathsf H}_{\omega,a})}=1$ for $0<a\leq 1$ by Lemma \ref{lem_402}. 
This will follow from showing
\begin{equation} \label{419}
\aligned
\phi_a^{+}(a) & = \frac{d}{da}\log \det(1+{\mathsf H}_{\omega,a}), \\
\phi_a^{-}(a) & = - \frac{d}{da}\log \det(1-{\mathsf H}_{\omega,a})
\endaligned
\end{equation} 
by definition \eqref{411} and \eqref{417}. 
This is a well-known formula 
for an integral operator defined on a finite interval with a continuous kernel. 
In fact, it is proved by a way similar to the proof of Theorem 12 of Chapter 24 in \cite{MR1892228}. 
(This also holds for $0< a < 1$, since $\phi_a^\pm(a)=h_\omega(a^2)=0$ by the convention in the end of Section 4.1  
and $\log\det(1\pm{\mathsf H}_{\omega,a})=0$ by Lemma \ref{lem_402}.)
\medskip

For every fixed $0<a\leq 1$, $A_a$ and $B_a$ are real entire functions satisfying $A_a(-z)=A_a(z)$ and $B_a(-z)=-B_a(z)$, respectively, 
by \eqref{415}, \eqref{416} and functional equations $\xi(s)=\xi(1-s)$ and $\xi(s)=\overline{\xi(\bar{s})}$. 
Successively, we prove that $A_a $ and $B_a$ have these properties for $a>1$. 

%
%
\subsection{Meromorphic continuation and functional equations}
%
%
Under assumptions and notations of Section 4.3, we define 
\begin{equation} \label{420}
\aligned
\tilde{E}_a (z) &:= \tilde{A}_a (z) - i \tilde{B}_a (z) \\
& = a^{iz} + \frac{\sqrt{a}}{2} \int_{a}^{\infty} (\phi_a^{+}(x) - \phi_a^{-}(x) )\, x^{\frac{1}{2}+iz} \, \frac{dx}{x} 
 = a^{iz} + \frac{\sqrt{a}}{2} \, {\mathsf F}_{1/2}(1-{\mathsf P}_{a})(\phi_{a}^{+}-\phi_{a}^{-})(z) \\
\tilde{E}_a^{\,\ast}(z) & :=\tilde{A}_a (z) + i \tilde{B}_a (z) \\
& = \frac{\sqrt{a}}{2} \int_{a}^{\infty} (\phi_a^{+}(x) + \phi_a^{-}(x) )\, x^{\frac{1}{2}+iz} \, \frac{dx}{x} 
 =\frac{\sqrt{a}}{2}\, {\mathsf F}_{1/2}(1-{\mathsf P}_{a})(\phi_{a}^{+}+\phi_{a}^{-})(z)
\endaligned
\end{equation}
for $\Im(z) \gg 0$ and $a>1$. We deal with $A_a $, $B_a $ via $\tilde{E}_a (z)$ and $\tilde{E}_a^{\,\ast}(z)$. 

\begin{lemma} \label{lem_407}
Let $\omega>1$ and $a>1$. Define 
\begin{equation}\label{421}
\Psi_a(z)=\int_{a}^{\infty} (\phi_a^{+}(x) - \phi_a^{-}(x) )\, x^{\frac{1}{2}+iz} \, \frac{dx}{x}. 
\end{equation}
Then integral of \eqref{421} converges absolutely for $\Im(z)>0$ 
and converges in the $L^2$-sense on $\Im(z)=0$. 
Moreover $\Psi_a(z)$ is extended to a meromorphic function in $\C$ which is analytic in $\C^+$.   
\end{lemma}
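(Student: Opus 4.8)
The plan is to identify $\Psi_a$ with the transform ${\mathsf F}_{1/2}$ of an explicit $L^2$-function and to read off its continuation from the transform identity \eqref{401}. First I would subtract the two integral equations \eqref{405} for $\varepsilon=+1$ and $\varepsilon=-1$; the inhomogeneous terms $h_\omega(ax)$ cancel and I obtain
\[
\phi_a^{+}(x) - \phi_a^{-}(x) = -\int_{0}^{a} h_\omega(xy)\bigl(\phi_a^{+}(y) + \phi_a^{-}(y)\bigr)\,dy = -({\mathsf H}_\omega g)(x),
\]
where $g := {\mathsf P}_a(\phi_a^{+} + \phi_a^{-})$. By Lemma \ref{lem_405} the functions $\phi_a^{\pm}$ lie in $L^2((0,a),dx)$, so $g$ lies in $L^2((0,\infty),dx)$ with compact support in $[1/a,a]$; since ${\mathsf H}_\omega$ is an isometry (Lemma \ref{lem_401}), the difference $\psi := \phi_a^{+} - \phi_a^{-} = -{\mathsf H}_\omega g$ belongs to $L^2((0,\infty),dx)$ and vanishes on $(0,1/a)$. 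With this notation $\Psi_a(z) = {\mathsf F}_{1/2}\bigl((1-{\mathsf P}_a)\psi\bigr)(z)$.

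Both convergence statements then follow from $\psi \in L^2$. For $v := \Im(z) > 0$, the Cauchy--Schwarz inequality gives
\[
\int_{a}^{\infty} |\psi(x)|\, x^{-\frac{1}{2}-v}\,dx \leq \Vert \psi \Vert_{L^2((a,\infty))} \left( \int_{a}^{\infty} x^{-1-2v}\,dx \right)^{1/2} < \infty,
\]
so the integral \eqref{421} converges absolutely throughout $\C^+$. On the line $\Im(z)=0$ the integral equals ${\mathsf F}_{1/2}\bigl((1-{\mathsf P}_a)\psi\bigr)(z)$ with $(1-{\mathsf P}_a)\psi \in L^2((0,\infty),dx)$, so it converges in the $L^2$-sense by the (Plancherel) isometry property of ${\mathsf F}_{1/2}$.

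For the continuation I would split $\Psi_a(z) = {\mathsf F}_{1/2}\psi(z) - \int_{1/a}^{a} \psi(x)\, x^{\frac{1}{2}+iz}\,\frac{dx}{x}$. The last integral runs over a finite interval on which $\psi$ is continuous (as $\omega>1$), hence defines an entire function of $z$. For the first term I apply \eqref{401} to the compactly supported $g$: because $g$ is supported in $[0,a]$, the last assertion of Lemma \ref{lem_401} makes \eqref{401} valid for $\Im(z) \geq 0$, whence
\[
{\mathsf F}_{1/2}\psi(z) = -({\mathsf F}_{1/2}{\mathsf H}_\omega g)(z) = -\Theta_\omega(z)\,({\mathsf F}_{1/2}g)(-z).
\]
Here $({\mathsf F}_{1/2}g)(-z)$ is entire, since $g$ has compact support, and $\Theta_\omega$ is meromorphic on $\C$ and analytic in $\C^+$ (recall $\omega>1$ forces $\Theta_\omega$ to be inner). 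Thus the right-hand side furnishes the meromorphic continuation of ${\mathsf F}_{1/2}\psi$, and therefore of $\Psi_a$, to all of $\C$, analytic in $\C^+$.

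The step requiring the most care is the passage from the boundary identity to the full half-plane identity for ${\mathsf F}_{1/2}\psi$: this is precisely where the compact support of $g$ is indispensable, since \eqref{401} extends to $\Im(z)\geq 0$ only for inputs supported on a bounded interval. A secondary point worth stressing is that the absolute convergence must be obtained from $\psi \in L^2$ via Cauchy--Schwarz rather than from a pointwise bound: the crude estimate \eqref{est_h}, which only yields $h_\omega(x)=O(x^{\omega+\epsilon})$ and hence $\psi(x)=O(x^{\omega+\epsilon})$, would establish convergence merely for $\Im(z) > \omega + \tfrac{1}{2}$, far short of all of $\C^+$.
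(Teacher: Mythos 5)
Your proposal is correct and follows essentially the same route as the paper: both derive $\phi_a^{+}-\phi_a^{-}=-{\mathsf H}_\omega{\mathsf P}_a(\phi_a^{+}+\phi_a^{-})$ from \eqref{405}, use the compact support of ${\mathsf P}_a(\phi_a^{+}+\phi_a^{-})$ together with \eqref{401} to write $\Psi_a(z)$ as $-\Theta_\omega(z)\cdot(\text{entire})+(\text{entire})$, and deduce the convergence statements from $\phi_a^{+}-\phi_a^{-}\in L^2$. Your explicit Cauchy--Schwarz estimate simply spells out what the paper gets by citing Widder's $H^2$ criterion, so there is no substantive difference.
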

\begin{proof}
By \eqref{405}, we have
\begin{equation} \label{422}
\phi_a^{+} - \phi_a^{-} = -{\mathsf H}_\omega {\mathsf P}_a( \phi_a^{+} + \phi_a^{-}), 
\end{equation}
where ${\mathsf P}_a( \phi_a^{+} + \phi_a^{-})$ has compact support in $[1/a,a]$. 
Therefore $\phi_a^{+} - \phi_a^{-}$ belongs to $L^2((\alpha,\infty),dx)$ for every $\alpha>0$. 
Hence integral \eqref{421} converges absolutely for $\Im(z)>0$ and defines a function of $H^2$ (\!\!\cite[Chap. II, \S10]{MR0005923}). 
Using \eqref{422}, we have
\begin{equation*}
\aligned
\Psi_a(z) 
&= {\mathsf F}_{1/2}(1-{\mathsf P}_a)(\phi_a^{+} - \phi_a^{-})(z) \\
&= - {\mathsf F}_{1/2}(1-{\mathsf P}_a){\mathsf H}_\omega {\mathsf P}_a( \phi_a^{+} + \phi_a^{-})(z) \\
&= - {\mathsf F}_{1/2}{\mathsf H}_\omega {\mathsf P}_a( \phi_a^{+} + \phi_a^{-})(z) 
+ {\mathsf F}_{1/2}{\mathsf P}_a{\mathsf H}_\omega {\mathsf P}_a( \phi_a^{+} + \phi_a^{-})(z)
\endaligned
\end{equation*}
for $\Im(z) \gg 0$. Here ${\mathsf P}_a( \phi_a^{+} + \phi_a^{-})$ and ${\mathsf P}_a{\mathsf H}_\omega {\mathsf P}_a( \phi_a^{+} + \phi_a^{-})$ 
have compact support in $(0,\infty)$. Therefore, we obtain
\begin{equation} \label{422_1}
\Psi_a(z) = - \Theta_\omega(z) {\mathsf F}_{1/2}{\mathsf P}_a( \phi_a^{+} + \phi_a^{-})(-z) 
+ {\mathsf F}_{1/2}{\mathsf P}_a{\mathsf H}_\omega {\mathsf P}_a( \phi_a^{+} + \phi_a^{-})(z), 
\end{equation}
where ${\mathsf F}_{1/2}{\mathsf P}_a( \phi_a^{+} + \phi_a^{-})(-z)$ 
and ${\mathsf F}_{1/2}{\mathsf P}_a{\mathsf H}_\omega {\mathsf P}_a( \phi_a^{+} + \phi_a^{-})(z)$ 
are entire functions. Hence $\Psi_a(z)$ is extended to a meromorphic function on $\C$, 
and is analytic in $\C^+$ by \eqref{422_1}, since $\Theta_\omega(z)$ is a meromorphic inner function in $\C^+$.  
\end{proof}

\begin{lemma} \label{lem_408} Let $\omega>1$ and $a>1$. 
Functions $\tilde{E}_a$ and $\tilde{E}_a^{\,\ast}$ of \eqref{420} are analytically continuable to meromorphic functions in $\C$ satisfying  
$\tilde{E}_a^{\,\ast}(z) = \Theta_\omega(z)\tilde{E}_a (-z)$, and they are analytic in $\C^+$. 
Moreover, both $\xi(\frac{1}{2}+\omega-iz)\tilde{E}_a (z)$ and $\xi(\frac{1}{2}+\omega-iz)\tilde{E}_a^{\,\ast}(z)$ are entire functions. 
\end{lemma}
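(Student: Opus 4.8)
The plan is to treat $\tilde E_a$ and $\tilde E_a^{\,\ast}$ asymmetrically: the continuation of $\tilde E_a$ is essentially free, whereas $\tilde E_a^{\,\ast}$ is organised around the functional equation $\tilde E_a^{\,\ast}(z)=\Theta_\omega(z)\tilde E_a(-z)$. Indeed, by \eqref{420} we have $\tilde E_a(z)=a^{iz}+\tfrac{\sqrt a}{2}\Psi_a(z)$ with $\Psi_a$ as in \eqref{421}, so Lemma \ref{lem_407} immediately gives that $\tilde E_a$ continues to a meromorphic function on $\C$ that is analytic in $\C^+$, the term $a^{iz}$ being entire. Everything then reduces to proving the functional equation and reading off its consequences.

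To establish the functional equation I would set $p:=\phi_a^++\phi_a^-$ and $q:=\phi_a^+-\phi_a^-$, both supported in $[1/a,\infty)$, so that $\tilde E_a(z)=a^{iz}+\tfrac{\sqrt a}{2}{\mathsf F}_{1/2}(1-{\mathsf P}_a)q(z)$ and $\tilde E_a^{\,\ast}(z)=\tfrac{\sqrt a}{2}{\mathsf F}_{1/2}(1-{\mathsf P}_a)p(z)$. Subtracting and adding the two instances of \eqref{405} yields $q=-{\mathsf H}_\omega{\mathsf P}_a p$ (this is \eqref{422}) and $p+{\mathsf H}_\omega{\mathsf P}_a q=2h_\omega(a\cdot)$, valid on all of $(0,\infty)$. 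Since ${\mathsf P}_a p$ and ${\mathsf P}_a q$ are compactly supported, Lemma \ref{lem_401} lets me replace each occurrence of ${\mathsf F}_{1/2}{\mathsf H}_\omega{\mathsf P}_a(\cdot)$ by $\Theta_\omega(z)\,{\mathsf F}_{1/2}{\mathsf P}_a(\cdot)(-z)$; combined with the change of variables $u=ax$ and Proposition \ref{prop_1}, which gives ${\mathsf F}_{1/2}h_\omega(a\cdot)(z)=a^{-1/2-iz}\Theta_\omega(z)$, this turns both $\tilde E_a(-z)$ and $\tilde E_a^{\,\ast}(z)$ into explicit combinations of $a^{\mp iz}\Theta_\omega(z)$ and of the entire functions ${\mathsf F}_{1/2}{\mathsf P}_a p$ and ${\mathsf F}_{1/2}{\mathsf P}_a q$. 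The relation $\Theta_\omega(z)\Theta_\omega(-z)=1$ from \eqref{105} is then exactly what collapses $\Theta_\omega(z)\tilde E_a(-z)$ to the same expression as $\tilde E_a^{\,\ast}(z)$. All these equalities are first derived for $\Im(z)\gg 0$, where the defining integrals converge absolutely, and then propagated to $\C$ by meromorphic continuation.

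With the functional equation in hand, meromorphy of $\tilde E_a^{\,\ast}$ is automatic, being the product of the meromorphic functions $\Theta_\omega$ and $z\mapsto\tilde E_a(-z)$. For analyticity in $\C^+$ I would argue not from $\tilde E_a(-z)$ alone, which has poles in $\C^+$ at the zeros of $E^\sharp$, but from the explicit formula produced above, namely $\tilde E_a^{\,\ast}(z)=a^{-iz}\Theta_\omega(z)-\tfrac{\sqrt a}{2}\Theta_\omega(z)\,{\mathsf F}_{1/2}{\mathsf P}_a q(-z)-\tfrac{\sqrt a}{2}{\mathsf F}_{1/2}{\mathsf P}_a p(z)$, in which $\Theta_\omega$ is analytic in $\C^+$ (it is a meromorphic inner function, since $\omega>1$) and the two Fourier transforms are entire by Paley--Wiener. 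Finally, to see that $\xi(\tfrac12+\omega-iz)\tilde E_a$ and $\xi(\tfrac12+\omega-iz)\tilde E_a^{\,\ast}$ are entire, I would multiply the representation \eqref{422_1} of $\Psi_a$ by $E(z):=\xi(\tfrac12+\omega-iz)$ and use $\Theta_\omega=E^\sharp/E$ with $E^\sharp(z)=\xi(\tfrac12-\omega-iz)$; every $\Theta_\omega$ is thereby converted into the ratio $E^\sharp/E$, the factor $E$ clears the only possible poles, and what remains is a sum of products of entire functions. The same substitution applied to the displayed formula for $\tilde E_a^{\,\ast}$ settles the starred function.

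The main obstacle is the bookkeeping inside the functional equation: keeping track of which Fourier transforms are entire and which are merely meromorphic, applying Lemma \ref{lem_401} with the correctly reflected argument (at $z$ in one place and at $-z$ in another, each valid a priori only on one half-plane), and invoking $\Theta_\omega(z)\Theta_\omega(-z)=1$ at precisely the point where the cross terms are meant to cancel. Once the identity $\tilde E_a^{\,\ast}(z)=\Theta_\omega(z)\tilde E_a(-z)$ is secured, the remaining assertions are formal consequences of the inner-function property of $\Theta_\omega$ and the factorisation $\Theta_\omega=E^\sharp/E$.
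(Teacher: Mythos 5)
Your proposal is correct and follows essentially the same route as the paper: meromorphy of $\tilde{E}_a$ from Lemma \ref{lem_407}, the functional equation derived from \eqref{422}, the relation $\phi_a^++\phi_a^-+{\mathsf H}_\omega{\mathsf P}_a(\phi_a^+-\phi_a^-)=2h_\omega(a\cdot)$ (the paper's \eqref{424}), Lemma \ref{lem_401}, Proposition \ref{prop_1} and \eqref{105}, with analyticity in $\C^+$ and the entirety of $\xi(\frac12+\omega-iz)\tilde{E}_a$, $\xi(\frac12+\omega-iz)\tilde{E}_a^{\,\ast}$ read off from the intermediate expression of the form $(\text{entire})+\Theta_\omega(z)(\text{entire})$. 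Your explicit formula for $\tilde{E}_a^{\,\ast}(z)$ in terms of ${\mathsf F}_{1/2}{\mathsf P}_a p$ and ${\mathsf F}_{1/2}{\mathsf P}_a q(-z)$ is exactly the displayed intermediate step in the paper's computation.
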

\begin{proof}
We have 
\begin{equation} \label{424}
 {\mathsf H}_\omega {\mathsf P}_a( \phi_a^{+} - \phi_a^{-})(x) = 2h_\omega(ax) -\phi_a^{+}(x) - \phi_a^{-}(x)
\end{equation}
by \eqref{405}. Using \eqref{105}, \eqref{422}, and \eqref{422_1}, we have
\begin{equation*}
\aligned
\,&\Theta_{\omega}(z) \tilde{E}_a (-z) 
 = \Theta_{\omega}(z)  a^{-iz} + \Theta_{\omega}(z)\frac{\sqrt{a}}{2}\Psi_a(-z) \\
& \overset{\eqref{422_1}}{=} \Theta_{\omega}(z)  a^{-iz} 
- \Theta_{\omega}(z)\Theta_\omega(-z) \frac{\sqrt{a}}{2}{\mathsf F}_{1/2}{\mathsf P}_a( \phi_a^{+} + \phi_a^{-})(z) 
+ \frac{\sqrt{a}}{2}\Theta_{\omega}(z){\mathsf F}_{1/2}{\mathsf P}_a{\mathsf H}_\omega {\mathsf P}_a( \phi_a^{+} + \phi_a^{-})(-z) \\
& \overset{\eqref{105}}{=} \Theta_{\omega}(z)  a^{-iz} 
-  \frac{\sqrt{a}}{2}{\mathsf F}_{1/2}{\mathsf P}_a( \phi_a^{+} + \phi_a^{-})(z) 
+ \Theta_{\omega}(z)\frac{\sqrt{a}}{2}{\mathsf F}_{1/2}{\mathsf P}_a{\mathsf H}_\omega {\mathsf P}_a( \phi_a^{+} + \phi_a^{-})(-z) \\
& \overset{\eqref{422}}{=} \Theta_{\omega}(z)  a^{-iz} 
-  \frac{\sqrt{a}}{2}{\mathsf F}_{1/2}{\mathsf P}_a( \phi_a^{+} + \phi_a^{-})(z) 
- \Theta_{\omega}(z)\frac{\sqrt{a}}{2}{\mathsf F}_{1/2}{\mathsf P}_a(\phi_a^{+} - \phi_a^{-})(-z) 
\endaligned
\end{equation*}
for $z \in \C$, since 
${\mathsf P}_a(\phi_a^{+} \pm \phi_a^{-})$, and 
${\mathsf P}_a{\mathsf H}_\omega {\mathsf P}_a( \phi_a^{+} + \phi_a^{-})$ have compact support. 
Further, by Proposition \ref{prop_1}, Lemma \ref{lem_401}, and \eqref{424}, we have 
\begin{equation*}
\aligned
\Theta_{\omega}(z) \tilde{E}_a (-z) 
& = \Theta_{\omega}(z)  a^{-iz} 
-  \frac{\sqrt{a}}{2}{\mathsf F}_{1/2}{\mathsf P}_a( \phi_a^{+} + \phi_a^{-})(z) 
- \Theta_{\omega}(z)\frac{\sqrt{a}}{2}{\mathsf F}_{1/2}{\mathsf P}_a(\phi_a^{+} - \phi_a^{-})(-z) \\
& \overset{\eqref{401}}{=} \Theta_{\omega}(z)  a^{-iz} 
-  \frac{\sqrt{a}}{2}{\mathsf F}_{1/2}{\mathsf P}_a( \phi_a^{+} + \phi_a^{-})(z) 
- \frac{\sqrt{a}}{2}{\mathsf F}_{1/2}{\mathsf H}_\omega{\mathsf P}_a(\phi_a^{+} - \phi_a^{-})(z)\\
& \overset{\eqref{424}}{=} \Theta_{\omega}(z)  a^{-iz} 
-  \frac{\sqrt{a}}{2}{\mathsf F}_{1/2}{\mathsf P}_a( \phi_a^{+} + \phi_a^{-})(z) \\
& \qquad - \sqrt{a}\,{\mathsf F}_{1/2}(h_\omega(ax))(z)
+ \frac{\sqrt{a}}{2}{\mathsf F}_{1/2}(\phi_a^{+} + \phi_a^{-})(z) \\
& \overset{\eqref{206}}{=} \Theta_{\omega}(z)  a^{-iz} 
-  \frac{\sqrt{a}}{2}{\mathsf F}_{1/2}{\mathsf P}_a( \phi_a^{+} + \phi_a^{-})(z) - \Theta_{\omega}(z)  a^{-iz} 
+ \frac{\sqrt{a}}{2}{\mathsf F}_{1/2}(\phi_a^{+} + \phi_a^{-})(z) \\
& = \frac{\sqrt{a}}{2}{\mathsf F}_{1/2}(1-{\mathsf P}_a)(\phi_a^{+} + \phi_a^{-})(z) = \tilde{E}_a^{\,\ast}(z) 
\endaligned
\end{equation*}
for $\Im(z) \gg 0$, since $\phi_a^{+} + \phi_a^{-}$ is identically zero on $(0,1/a)$ and has polynomial growth at $x=+\infty$. 
Hence $\tilde{E}_a^{\,\ast}(z) = \Theta_\omega(z)\tilde{E}_a (-z)$ for $\Im(z) \gg 0$.  
By Lemma \ref{lem_407}, $\tilde{E}_a (z)$ is meromorphic in $\C$, therefore, 
$\tilde{E}_a^{\,\ast}(z)$ is also analytically continuable to a meromorphic function in $\C$. 
Moreover, $\tilde{E}_a (z) =  \Theta_\omega(z)\text{(entire)}+\text{(entire)}$ from the proof of Lemma \ref{lem_407}. 
Thus 
\begin{equation*}
\tilde{E}_a^{\,\ast}(z)=\Theta(z)\tilde{E}_a (-z) =  \text{(entire)}+\Theta_\omega(z)\text{(entire)} 
\end{equation*}
by \eqref{105}, and hence $\tilde{E}_a^{\,\ast}(z)$ is analytic in $\C^+$. 
Simultaneously, these equalities show that 
$\xi(\frac{1}{2}+\omega-iz)\tilde{E}_a (z)$ and $\xi(\frac{1}{2}+\omega-iz)\tilde{E}_a^{\,\ast}(z)$ are entire 
by definition of $\Theta_\omega(z)$.  
\end{proof}

Lemma \ref{lem_408} implies the following immediately. 

\begin{lemma} \label{lem_409} 
Let $\omega>1$ and $a>1$. 
Then $\tilde{A}_a (z)$ and $\tilde{B}_a (z)$ are analytically continuable to meromorphic functions in $\C$, and they are analytic in $\C^+$. 
Also, $A_a(z)$ and $B_a(z)$ are both entire functions. 
In addition, we have functional equations 
\begin{equation*}
\aligned
\Theta_{\omega}(z)\tilde{A}_a (-z) &=\tilde{A}_a (z), \qquad \Theta_{\omega}(z) \tilde{B}_a (-z)  = - \tilde{B}_a (z), \\
A_a (-z) & = A_a (z), \qquad B_a (-z) = - B_a (z).
\endaligned
\end{equation*}
\end{lemma}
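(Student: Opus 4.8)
The plan is to read everything off Lemma~\ref{lem_408}, since by \eqref{420} the functions $\tilde{A}_a$ and $\tilde{B}_a$ are just the symmetric and antisymmetric combinations
\[
\tilde{A}_a = \tfrac{1}{2}\bigl(\tilde{E}_a + \tilde{E}_a^{\,\ast}\bigr),
\qquad
\tilde{B}_a = \tfrac{i}{2}\bigl(\tilde{E}_a - \tilde{E}_a^{\,\ast}\bigr).
\]
First I would observe that the meromorphic continuation of $\tilde{A}_a$ and $\tilde{B}_a$ to $\C$, and their analyticity in $\C^+$, follow at once from these two identities together with the corresponding properties of $\tilde{E}_a$ and $\tilde{E}_a^{\,\ast}$ proved in Lemma~\ref{lem_408}. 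For entireness of $A_a$ and $B_a$, recall from \eqref{416} that $A_a$ (resp. $B_a$) is a nonzero constant multiple of $\xi(\tfrac{1}{2}+\omega-iz)\tilde{A}_a(z)$ (resp. $\xi(\tfrac{1}{2}+\omega-iz)\tilde{B}_a(z)$); expanding through the same combinations exhibits each as a half-sum of $\xi(\tfrac{1}{2}+\omega-iz)\tilde{E}_a(z)$ and $\xi(\tfrac{1}{2}+\omega-iz)\tilde{E}_a^{\,\ast}(z)$, both entire by Lemma~\ref{lem_408}.

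The functional equations for $\tilde{A}_a$ and $\tilde{B}_a$ are then a short computation starting from $\tilde{E}_a^{\,\ast}(z)=\Theta_\omega(z)\tilde{E}_a(-z)$. Replacing $z$ by $-z$ and invoking $\Theta_\omega(z)\Theta_\omega(-z)=1$ from \eqref{105} produces the companion relation $\tilde{E}_a(z)=\Theta_\omega(z)\tilde{E}_a^{\,\ast}(-z)$. Multiplying the combinations defining $\tilde{A}_a(-z)$ and $\tilde{B}_a(-z)$ by $\Theta_\omega(z)$ and substituting these two relations collapses the right-hand sides to $\tilde{A}_a(z)$ and $-\tilde{B}_a(z)$ respectively, yielding $\Theta_\omega(z)\tilde{A}_a(-z)=\tilde{A}_a(z)$ and $\Theta_\omega(z)\tilde{B}_a(-z)=-\tilde{B}_a(z)$.

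Finally, for $A_a$ and $B_a$ I would transfer these into ordinary even/odd symmetry by absorbing the gamma-zeta factor. Applying $\xi(s)=\xi(1-s)$ with $s=\tfrac{1}{2}-\omega-iz$ gives $\xi(\tfrac{1}{2}+\omega+iz)=\xi(\tfrac{1}{2}-\omega-iz)=\Theta_\omega(z)\,\xi(\tfrac{1}{2}+\omega-iz)$ straight from the definition \eqref{105_1} of $\Theta_\omega$. Substituting this into $A_a(-z)=m(a)\,\xi(\tfrac{1}{2}+\omega+iz)\tilde{A}_a(-z)$ and using $\Theta_\omega(z)\tilde{A}_a(-z)=\tilde{A}_a(z)$ recovers $A_a(z)$, while the identical manipulation with the opposite sign gives $B_a(-z)=-B_a(z)$; the factor $m(a)$, being a nonzero real constant for fixed $a$, plays no role. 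There is no genuine obstacle here, as all the analytic content sits in Lemma~\ref{lem_408}; the only point needing care is correctly bookkeeping the two sign conventions and the functional-equation shift $\tfrac{1}{2}\pm\omega\mapsto\tfrac{1}{2}\mp\omega$, which is where a slip would most easily creep in.
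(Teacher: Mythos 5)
Your proposal is correct and follows essentially the same route as the paper: decompose $\tilde{A}_a$ and $\tilde{B}_a$ as the combinations $\tfrac{1}{2}(\tilde{E}_a+\tilde{E}_a^{\,\ast})$ and $\tfrac{i}{2}(\tilde{E}_a-\tilde{E}_a^{\,\ast})$, import all the analytic content from Lemma \ref{lem_408}, and derive the functional equations from $\tilde{E}_a^{\,\ast}(z)=\Theta_\omega(z)\tilde{E}_a(-z)$ together with \eqref{105}. In fact you supply more detail than the paper does, in particular the explicit use of $\xi(s)=\xi(1-s)$ to convert the $\Theta_\omega$-twisted symmetry of $\tilde{A}_a,\tilde{B}_a$ into the plain even/odd symmetry of $A_a,B_a$, which the paper leaves implicit.
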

\begin{proof} 
We have 
$2\tilde{A}_a =\tilde{E}_a +\tilde{E}_a^{\,\ast}$ and $-2i \tilde{B}_a =\tilde{E}_a - \tilde{E}_a^{\,\ast}$ by definition \eqref{420}.  
Therefore, they are analytically continuable to meromorphic function in $\C$ and satisfy   
$2\tilde{A}_a (z) = \tilde{E}_a (z) + \Theta_{\omega}(z)\tilde{E}_a (-z)$ and 
$-2i \tilde{B}_a (z) = \tilde{E}_a (z) - \Theta_{\omega}(z) \tilde{E}_a (-z)$ by Lemma \ref{lem_408}. 
That imply the functional equations stated in the lemma. 
Other things are consequences of Lemma \ref{lem_408}. 
\end{proof}

\begin{lemma} \label{lem_411}
Let $\omega>1$ and $a>1$. 
Then $A_a(z)$ and $B_a(z)$ are real entire functions.
\end{lemma}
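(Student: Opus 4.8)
The plan is to show that $A_a$ and $B_a$ satisfy the reality condition $F(z)=\overline{F(\bar z)}$; since Lemma~\ref{lem_409} already guarantees that both are entire, this will complete the proof that they are real entire functions. The three inputs I would exploit are: the kernels $\phi_a^{\pm}(x)$ are real-valued (Lemmas~\ref{lem_405} and~\ref{lem_406}) while $a,x>0$; the multiplier $m(a)$ is real (see \eqref{417}); and $\xi$ obeys $\xi(\bar s)=\overline{\xi(s)}$ and $\xi(s)=\xi(1-s)$.

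First I would record a conjugation--reflection symmetry of the auxiliary functions $\tilde{E}_a$ and $\tilde{E}_a^{\,\ast}$ of \eqref{420}. Working in the half-plane $\Im(z)\gg 0$ where the defining integrals converge absolutely, and noting that $\Im(-\bar z)=\Im(z)$ so that $-\bar z$ stays in that half-plane, a direct computation using $\overline{a^{-i\bar z}}=a^{iz}$ and $\overline{x^{\frac12-i\bar z}}=x^{\frac12+iz}$ together with the reality of $\phi_a^{\pm}$ gives
\begin{equation*}
\overline{\tilde{E}_a(-\bar z)}=\tilde{E}_a(z), \qquad \overline{\tilde{E}_a^{\,\ast}(-\bar z)}=\tilde{E}_a^{\,\ast}(z),
\end{equation*}
which then propagate to all of $\C$ by the meromorphic continuation established in Lemma~\ref{lem_408}. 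Since $2\tilde{A}_a=\tilde{E}_a+\tilde{E}_a^{\,\ast}$ and $-2i\tilde{B}_a=\tilde{E}_a-\tilde{E}_a^{\,\ast}$, these identities immediately yield
\begin{equation*}
\tilde{A}_a(z)=\overline{\tilde{A}_a(-\bar z)}, \qquad \tilde{B}_a(z)=-\,\overline{\tilde{B}_a(-\bar z)}.
\end{equation*}

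Next I would convert the $\xi$-factor appearing in \eqref{416}. Using $\xi(\bar s)=\overline{\xi(s)}$ and then $\xi(s)=\xi(1-s)$, one finds $\overline{\xi(\frac12+\omega-i\bar z)}=\xi(\frac12+\omega+iz)=\xi(\frac12-\omega-iz)=\Theta_\omega(z)\,\xi(\frac12+\omega-iz)$ by the definition \eqref{105_1} of $\Theta_\omega$. Combining this with the reality of $m(a)$, with the relations just obtained (rewritten as $\overline{\tilde{A}_a(\bar z)}=\tilde{A}_a(-z)$ and $\overline{\tilde{B}_a(\bar z)}=-\tilde{B}_a(-z)$ by replacing $z$ with $-z$), and finally with the functional equations $\Theta_\omega(z)\tilde{A}_a(-z)=\tilde{A}_a(z)$ and $\Theta_\omega(z)\tilde{B}_a(-z)=-\tilde{B}_a(z)$ of Lemma~\ref{lem_409}, a short substitution collapses $\overline{A_a(\bar z)}$ to $A_a(z)$ and $\overline{B_a(\bar z)}$ to $B_a(z)$.

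The argument is essentially bookkeeping once the conjugation--reflection symmetry of $\tilde{E}_a,\tilde{E}_a^{\,\ast}$ is in hand, so I do not expect a genuine obstacle. The only point requiring care is the correct handling of the complex conjugates of the factors $a^{iz}$ and $x^{\frac12+iz}$, and the verification that the reflected argument $-\bar z$ remains in the domain of absolute convergence of the integrals in \eqref{420}, so that the elementary identities may be derived there and then extended to all of $\C$ via the continuation supplied by Lemma~\ref{lem_408}.
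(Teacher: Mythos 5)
Your proof is correct. The core idea is the same as the paper's -- reality of $A_a$ and $B_a$ is extracted from the reality of the kernels $\phi_a^{\pm}$ and of $m(a)$, combined with the parity relations of Lemma \ref{lem_409} -- but the execution differs in where the Schwarz-reflection principle is applied. The paper works at the level of $A_a$ and $B_a$ themselves: it writes $\xi(\frac12+\omega-iz)={\mathsf F}_{1/2}(x^{-\omega}\phi(x))(z)$ via the theta-function representation, forms the explicit multiplicative convolutions $f^{\pm}$ so that $A_a={\mathsf F}_{1/2}(f^+)$ and $B_a={\mathsf F}_{1/2}(f^-)$ with $f^+$ and $if^-$ real-valued, and then invokes the general fact that a Mellin transform of a real function satisfying $F(-z)=F(z)$ (resp.\ $F(-z)=-F(z)$) obeys $F^\sharp=F$. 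You instead apply the reflection symmetry directly to $\tilde{E}_a$ and $\tilde{E}_a^{\,\ast}$ in the half-plane of absolute convergence, propagate it by the continuation of Lemma \ref{lem_408}, and handle the $\xi$-factor separately through $\overline{\xi(\frac12+\omega-i\bar z)}=\Theta_\omega(z)\xi(\frac12+\omega-iz)$, which is then absorbed by the functional equations $\Theta_\omega(z)\tilde{A}_a(-z)=\tilde{A}_a(z)$ and $\Theta_\omega(z)\tilde{B}_a(-z)=-\tilde{B}_a(z)$ of Lemma \ref{lem_409}. Your route avoids the theta-integral representation of $\xi$ and the explicit convolution kernels entirely, at the mild cost of leaning a bit harder on the $\Theta_\omega$-functional equations; all the conjugation identities you use ($\overline{a^{-i\bar z}}=a^{iz}$, $\overline{x^{\frac12-i\bar z}}=x^{\frac12+iz}$ for $a,x>0$) check out, and the restriction $\Im(-\bar z)=\Im(z)$ keeping the reflected argument in the domain of convergence is correctly observed.
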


\begin{proof} 
At first, we note that if $F(z)={\mathsf F}_{1/2}(f(x))(z)$ for $\Im(z) \gg 0$, then 
$F^\sharp(z)={\mathsf F}_{1/2}(x^{-1}\overline{f(x^{-1}}))(z)$ and $F(-z)={\mathsf F}_{1/2}(x^{-1}f(x^{-1}))(z)$ for $\Im(z) \ll 0$. 
Therefore, if $f(x)$ (resp. $if(x)$) is real-valued, 
$F(z)$ is analytically continued to a meromorphic function in $\C$, 
and $F(-z)=F(z)$ (resp. $F(-z)=-F(z)$), then $F^\sharp(z)=F(z)$ holds for $z \in \C$. 
Let 
\[
\phi(x) := \frac{1}{2}\frac{d}{dx}\Bigl( x^{2}\frac{d}{dx}\theta(x^2) \Bigr) =
2 \sum_{n=1}^{\infty} \bigl( 2\pi^2 n^4 x^4 - 3 \pi n^2 x^2 \bigr)
\exp(-\pi n^2 x^2).
\]
Then $\phi(1/x)=x\phi(x)$ and $\xi(s)=\int_{0}^{\infty} \phi(x) x^{s} \frac{dx}{x}$ for every $s \in \C$ (\cite[\S10.1]{MR882550}). 
Hence $\xi(\frac{1}{2}+\omega-iz) = {\mathsf F}_{1/2}(x^{-\omega}\phi(x))(z)$ for $z \in \C$. 
On the other hand, by \eqref{413_2}, 
$\tilde{A}_a(z)={\mathsf F}_{1/2}(\frac{\sqrt{a}}{2}(\delta_a+(1-{\mathsf P}_a)\phi_a^+))(z)$ 
and 
$\tilde{B}_a(z)={\mathsf F}_{1/2}(\frac{i\sqrt{a}}{2}(\delta_a-(1-{\mathsf P}_a)\phi_a^-))(z)$
for $\Im(z) \gg 0$ , 
where $\delta_a(x)$ is the Dirac delta-function at $x=a$. 
Therefore $A_a(z)={\mathsf F}_{1/2}(f^+(x))(z)$ and $B_a(z)={\mathsf F}_{1/2}(f^-(x))(z)$ for  
\[
\aligned
f^+(x):&=\frac{m(a)\sqrt{a}}{2}\int_{0}^{\infty} (x/y)^{-\omega}\phi(x/y)(\delta_a(y)+(1-{\mathsf P}_a)\phi_a^+(y)) \frac{dy}{y} \\
&=\frac{m(a)x^{-\omega}}{2}\left(a^{\omega-\frac{1}{2}}\phi(x/a) + \sqrt{a}\int_{a}^{\infty} \phi(x/y)\phi_a^+(y) \, y^{\omega-1} \, dy \right),
\endaligned
\]
\[
\aligned
f^-(x):&=\frac{i\sqrt{a}}{2m(a)}\int_{0}^{\infty} (x/y)^{-\omega}\phi(x/y)(\delta_a(y)-(1-{\mathsf P}_a)\phi_a^-(y)) \frac{dy}{y} \\
&=\frac{ix^{-\omega}}{2m(a)}\left(a^{\omega-\frac{1}{2}}\phi(x/a) - \sqrt{a}\int_{a}^{\infty} \phi(x/y)\phi_a^-(y) \, y^{\omega-1}\,dy \right) 
\endaligned
\]
if $\Im(z) \gg 0$. 
Here $f^+(x)$ and $if^-(x)$ are both real-valued, since $m(a)$ is real, and $\phi(x)$, $\phi_a^\pm(x)$ are real-valued. 
In addition, $A_a(-z)=A_a(z)$ and $B_a(-z)=-B_a(z)$ for $z \in \C$ by Lemma \ref{lem_409}. 
Hence $A_a^\sharp=A_a$ and $B_a^\sharp=B_a$.
\end{proof}

Now we complete the proof of Theorem \ref{thm_4} (1), (2). 
The remaining assertion is (3). 
In order to prove it, we show the following lemma.  

\begin{lemma} \label{lem_410} 
Let $\omega>1$. Then   
\begin{equation}\label{425}
\lim_{a \to 1^+} A_a(z) = A^\omega(z), \qquad \lim_{a \to 1^+} B_a(z) = B^\omega(z)
\end{equation}
hold uniformly on every compact subset in $\C$. 
\end{lemma}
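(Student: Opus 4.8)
The plan is to reduce the whole statement to the convergence of two \emph{entire} functions. I introduce $\mathcal E_a(z) := \xi(\tfrac12+\omega-iz)\tilde E_a(z)$ and $\mathcal E_a^\ast(z) := \xi(\tfrac12+\omega-iz)\tilde E_a^\ast(z)$, which are entire by Lemma \ref{lem_408}. Since $\tilde A_a=\tfrac12(\tilde E_a+\tilde E_a^\ast)$ and $\tilde B_a=\tfrac1{2i}(\tilde E_a^\ast-\tilde E_a)$ by \eqref{420}, definition \eqref{416} gives $A_a=\tfrac{m(a)}2(\mathcal E_a+\mathcal E_a^\ast)$ and $B_a=\tfrac1{2i\,m(a)}(\mathcal E_a^\ast-\mathcal E_a)$. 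Because $\mu(a)$ is continuous on $(1,\infty)$ with $\lim_{a\to1^+}\mu(a)=0$ (by \eqref{404} and \eqref{411}), formula \eqref{417} shows $m(a)\to1$ as $a\to1^+$. Hence it suffices to prove that $\mathcal E_a(z)\to\xi(\tfrac12+\omega-iz)$ and $\mathcal E_a^\ast(z)\to\xi(\tfrac12-\omega-iz)$ uniformly on compact subsets of $\C$; then $A_a\to\tfrac12(\xi(s+\omega)+\xi(s-\omega))=A^\omega$ and $B_a\to\tfrac1{2i}(\xi(s-\omega)-\xi(s+\omega))=B^\omega$ with $s=\tfrac12-iz$, which is \eqref{425}.

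Two auxiliary estimates drive the argument. First, from \eqref{405} one has $\phi_a^\varepsilon=(1+\varepsilon\mathsf H_{\omega,a})^{-1}\mathsf P_a h_\omega(a\cdot)$; since $\|\mathsf H_{\omega,a}\|\to0$ as $a\to1^+$ (its Hilbert--Schmidt norm is bounded by $(2\log a\int_1^{a^2}|h_\omega|^2)^{1/2}$, as in the proof of Lemma \ref{lem_402}) while $\|\mathsf P_a h_\omega(a\cdot)\|_{L^2(0,a)}^2=\tfrac1a\int_1^{a^2}|h_\omega(u)|^2\,du\to0$, we get $\|\phi_a^\pm\|_{L^2(0,a)}\to0$. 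Second, every correction function appearing below is supported in the shrinking interval $[1/a,a]$: indeed $\phi_a^\varepsilon$ vanishes on $(0,1/a)$ by Lemma \ref{lem_405}, and $\mathsf P_a\mathsf H_\omega\mathsf P_a$ maps into functions supported in $[1/a,a]$ since $h_\omega(xy)=0$ for $xy<1$. For such $g_a$ with $\|g_a\|_{L^2}\to0$, Cauchy--Schwarz gives $|\mathsf F_{1/2}g_a(z)|\le\|g_a\|_{L^2}\big(\int_{1/a}^a x^{-1-2\Im z}\,dx\big)^{1/2}$, and the second factor stays bounded (in fact tends to $0$) for $z$ in a fixed compact set; thus $\mathsf F_{1/2}g_a\to0$ uniformly on compact subsets of $\C$. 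Exploiting the support $[1/a,a]$ rather than $[0,a]$ is essential here, so that the bound is valid in all of $\C$, including $\C^-$.

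With these in hand I would use the entire representations prepared in Section 4.4. By \eqref{422_1}, $\tilde E_a(z)=a^{iz}-\tfrac{\sqrt a}2\Theta_\omega(z)P_1(z)+\tfrac{\sqrt a}2P_2(z)$ with $P_1(z)=\mathsf F_{1/2}\mathsf P_a(\phi_a^++\phi_a^-)(-z)$ and $P_2(z)=\mathsf F_{1/2}\mathsf P_a\mathsf H_\omega\mathsf P_a(\phi_a^++\phi_a^-)(z)$; multiplying by $\xi(\tfrac12+\omega-iz)$ and using $\xi(\tfrac12+\omega-iz)\Theta_\omega(z)=\xi(\tfrac12-\omega-iz)$ from \eqref{105_1} yields $\mathcal E_a(z)=a^{iz}\xi(\tfrac12+\omega-iz)-\tfrac{\sqrt a}2\xi(\tfrac12-\omega-iz)P_1(z)+\tfrac{\sqrt a}2\xi(\tfrac12+\omega-iz)P_2(z)$, a genuine product of entire functions. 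For $\mathcal E_a^\ast$ I would run the same computation starting from \eqref{424}: writing $\phi_a^++\phi_a^-=2h_\omega(a\cdot)-\mathsf H_\omega\mathsf P_a(\phi_a^+-\phi_a^-)$, and using \eqref{206} and \eqref{401} to evaluate $\mathsf F_{1/2}h_\omega(a\cdot)(z)=a^{-1/2-iz}\Theta_\omega(z)$, I obtain $\tilde E_a^\ast(z)=a^{-iz}\Theta_\omega(z)-\tfrac{\sqrt a}2\Theta_\omega(z)Q_1(z)-\tfrac{\sqrt a}2Q_2(z)$, whence $\mathcal E_a^\ast(z)=a^{-iz}\xi(\tfrac12-\omega-iz)-\tfrac{\sqrt a}2\xi(\tfrac12-\omega-iz)Q_1(z)-\tfrac{\sqrt a}2\xi(\tfrac12+\omega-iz)Q_2(z)$ with $Q_1,Q_2$ again transforms of functions supported in $[1/a,a]$. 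Letting $a\to1^+$, the leading terms $a^{\pm iz}\xi(\cdots)$ converge to $\xi(\tfrac12\pm\omega-iz)$ uniformly on compacts, the $\xi$-prefactors of the remaining terms are bounded on compacts, and $P_1,P_2,Q_1,Q_2\to0$ uniformly on compacts by the second estimate; hence $\mathcal E_a\to\xi(\tfrac12+\omega-iz)$ and $\mathcal E_a^\ast\to\xi(\tfrac12-\omega-iz)$, which completes the proof via the reduction of the first paragraph.

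The main obstacle is precisely the passage to uniform convergence on \emph{all} of $\C$: the integrals defining $\tilde A_a,\tilde B_a$ in \eqref{413_2} converge only for $\Im z\gg0$, and $\tilde A_a,\tilde B_a$ are merely meromorphic, so they cannot be estimated directly in the lower half-plane. The device that overcomes this is to work throughout with the entire functions $\mathcal E_a,\mathcal E_a^\ast$ and their explicit entire representations from Lemmas \ref{lem_407}--\ref{lem_408}, which turn the problem into the vanishing of $\mathsf F_{1/2}$ of functions living on the collapsing interval $[1/a,a]$, a statement controlled uniformly over the whole plane by a single Cauchy--Schwarz bound. The matching one-sided limit $a\to1^-$ is immediate, since for $0<a\le1$ the formulas \eqref{415}--\eqref{416} give $A_a,B_a$ explicitly and continuously in $a$ up to $a=1$.
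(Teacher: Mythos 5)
Your proof is correct, and it rests on the same two pillars as the paper's own argument: the representation \eqref{422_1} of $\tilde{E}_a$ (together with the analogous decomposition of $\tilde{E}_a^{\,\ast}$ obtained from \eqref{424} and \eqref{401}), and the fact that every correction term is ${\mathsf F}_{1/2}$ of a function supported in the collapsing interval $[1/a,a]$ whose norm vanishes as $a\to 1^+$. The differences are in execution, and both work. First, you derive the smallness of the corrections from the operator identity $\phi_a^{\,\varepsilon}=(1+\varepsilon{\mathsf H}_{\omega,a})^{-1}{\mathsf P}_a h_\omega(a\cdot)$ together with $\Vert{\mathsf H}_{\omega,a}\Vert\to 0$ and $\Vert{\mathsf P}_a h_\omega(a\cdot)\Vert\to 0$, whereas the paper invokes the uniform convergence $\phi_a^{\pm}\to h_\omega$ on $[1/2,3/2]$ coming from \eqref{402} and \eqref{404_1}; your route is a little more economical and needs only $L^2$ information about $h_\omega$ near $x=1$. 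Second, and more substantively, the paper identifies the limit of $\tilde{E}_a$ in two stages --- convergence to \emph{some} meromorphic function uniformly on compacta, then identification of that limit as $1$ only in a half-plane $\Im(z)>v$ via \eqref{426}, followed by uniqueness of analytic limits --- and then recovers $\tilde{A}_a,\tilde{B}_a$ through the functional equation of Lemma \ref{lem_409}. You instead compute the limits of the entire functions $\xi(\frac12+\omega-iz)\tilde{E}_a(z)$ and $\xi(\frac12+\omega-iz)\tilde{E}_a^{\,\ast}(z)$ directly on all of $\C$ by writing out the second decomposition explicitly; this removes the detour through the upper half-plane at the cost of one more computation of the type already carried out in the proof of Lemma \ref{lem_408}, and it makes the passage to the lower half-plane transparent. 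Both arguments need $m(a)\to 1$, which each obtains from \eqref{404}, \eqref{411} and \eqref{417}. One small remark: your closing observation about $a\to 1^-$ is not part of this lemma; in the paper it belongs to the verification of Theorem \ref{thm_4}(3), carried out separately after Lemma \ref{lem_410} using \eqref{415} and \eqref{416}.
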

\begin{proof}
By \eqref{402} and \eqref{404_1}, $\phi_a^\pm(x) \to h_\omega(x)$ uniformly on $[1/2,3/2]$ as $a \to 1^+$. 
Therefore, by \eqref{420} and \eqref{422_1}, $\tilde{E}_a(z)$ converges to a meromorphic function in $\C$ uniformly on every compact subset in $\C$ as $a \to 1^+$, 
since  ${\mathsf P}_a( \phi_a^{+} + \phi_a^{-})(-z)$ 
and ${\mathsf P}_a{\mathsf H}_\omega {\mathsf P}_a( \phi_a^{+} + \phi_a^{-})(z)$ 
both have support in $[1/a,a]$. 
On the other hand, we have 
\begin{equation} \label{426}
\phi_a^{+}(x) - \phi_a^{-} (x)
= - \int_{1/a}^{a} h_\omega(xz) (\phi_a^{+}(z) + \phi_a^{-} (z)) \, dz
\end{equation}
by \eqref{405}, since $\phi_a^{\,\pm}(x)=0$ for $0<x<1/a$.   
Multiplying by $x^{-v}$ on both sides of \eqref{426}, 
and then tending $a \to 1^+$, we have 
\begin{equation*}
\lim_{a \to 1^+} (\phi_a^{+}(x) - \phi_a^{-} (x))x^{-v} = 0 
\end{equation*}
uniformly on $(1,\infty)$ if $v>0$ is large, since $h_\omega$ is of polynomial growth at $+\infty$. 
Hence $\lim_{a \to 1^+} \tilde{E}_a (z)=1$ uniformly on every compact subset in $\Im(z)>v$. 
As a consequence $\lim_{a \to 1^+} \tilde{E}_a (z)=1$, and 
\begin{equation*}
\lim_{a \to 1^+}\tilde{A}_a (z) = \frac{1}{2}(1 + \Theta_\omega(z)), \quad 
\lim_{a \to 1^+}\tilde{B}_a (z) = \frac{i}{2}(1 - \Theta_\omega(z))
\end{equation*}      
uniformly on every compact subset in $\C$. 
Multiplying by $\xi(\frac{1}{2}+\omega-iz)$ on both sides of these equalities, 
we obtain  \eqref{425} by \eqref{404} and \eqref{416}.  
\end{proof}

\noindent
By definition, we have $m(a)=1$ and 
\[
\aligned
A_a(z) &= \xi\left(\frac{1}{2}+\omega-iz\right)a^{iz} + \xi\left(\frac{1}{2}-\omega-iz\right)a^{-iz}, \\ 
B_a(z) &= \xi\left(\frac{1}{2}+\omega-iz\right)a^{iz} - \xi\left(\frac{1}{2}-\omega-iz\right)a^{-iz}
\endaligned
\]
for $0<a\leq 1$. 
This shows $(A_1,B_1)=(A^\omega,B^\omega)$ and 
$
\displaystyle{\lim_{a \to 1^-} (A_a(z),B_a(z)) = (A^\omega(z),B^\omega(z))}
$ 
uniformly on every compact subset in $\C$. 
Together with Lemma \ref{lem_410},  we obtain Theorem \ref{thm_4} (3), and hence we complete the proof of Theorem \ref{thm_4}.
%
%
\section{Comments on the validity of Theorem \ref{thm_4}}
%
%
In this section, we comment on a range of $\omega>0$ where Theorem \ref{thm_4} is expected to be extended. 
There might be three levels of difficulties at least: (i) $\omega>1/2$, (ii) $\omega=1/2$, (iii) $0<\omega<1/2$.
\medskip

It is natural to expect that Theorem \ref{thm_4} is proved unconditionally for the range (i) as mentioned after Theorem \ref{thm_4}. 
In fact, all lemmas in Section 4.1 are already proved for $\omega>1/2$. 
Therefore, the remaining problems are a proof of the differentiability of $\phi_a^\varepsilon(x)$ with respect to $x$ and $a$, 
and formula of $m(a)$ by determinants.  
However, if we understand partial derivatives 
$\frac{\partial}{\partial x}\phi_a^\varepsilon(x)$ and $\frac{\partial}{\partial a}\phi_a^\varepsilon(x)$ 
in the sense of distributions as in Burnol~\cite{B1}, 
and if we use the theory of Fredholm determinants for $L^2$-kernels (\cite[Chap. VI]{MR0104991}), 
then most of Section 4.3 and 4.4 have reasonable meaning, 
and we may obtain Theorem \ref{thm_4} for $\omega>1/2$. 
This way is plausible, and must be carried out after a suitable preparation for the theory of distributions. 

The case (ii) have more difficulties, because the kernel of ${\mathsf H}_{\omega,a}$ is no longer Hilbert-Schmidt type. 
However, $\Theta_\omega(z)$ is still inner function in $\C^+$ unconditionally. 
Therefore, problems may be restricted to the theory of integral operators, its determinants, and the theory of integral equations only as well as the case (i). 
See the later half of comments on (iii) below.  
\medskip

It is easily predicted that it is very hard to generalize Theorem \ref{thm_4} to the range (iii) unconditionally. 
A reason of difficulties is that problems of arithmetic and analysis are mixed in this range. 
However, if we assume RH, the function $\Theta_\omega$ is inner in $\C^+$ for every $\omega>0$, 
and hence remaining problems may be restricted to the theory of integral operators and the theory of integral equations only. 
Such analytic problems may be solved without essential difficulties. 

In fact, if $\Theta_\omega(z)$ is an inner function in $\C^+$, 
${\mathsf H}_{\omega}$ is extended to an isometry on $L^2((0,\infty),dx)$ by Lemma \ref{lem_401}. 
On the other hand, ${\mathsf H}_{\omega,a}$ is a compact operator on $L^2((0,a),dx)$ even for (iii) (and (ii)) 
because its kernel is a sum of finitely many weakly singular kernels. 
Therefore, in particular, the Fredholm alternative holds. 
Hence we may obtain reasonable generalization of results in Section 4.1 for $\omega>0$ under RH,  
and then throughout distribution theoretic dealing of Section 4.3 and 4.4, 
we may arrive at the generalization of Theorem \ref{thm_4} for the range (iii) (and (ii)) under RH. 
In this strategy, it is necessary to note that $\phi_a^\varepsilon(x)$ have some possible singularities, 
which affect definition \eqref{411} of $\mu(a)$ and definition \eqref{417} of $m(a)$, 
and that the definition of determinants $\det(1 \pm {\mathsf H}_{\omega,a})$ should be changed 
as in K\"onig~\cite{MR568991}. 
\medskip

We leave a justification of the above argument for a future study.  

%
\appendix
%
%
\section{}

Suppose that $\Theta_\omega(z)$ is an inner function in $\C^+$. 
(It holds unconditionally for $\omega \geq 1/2$, and also for $0<\omega<1/2$ under RH.)  
Then it defines the reproducing kernel Hilbert space $K(\Theta_\omega)$ 
which is isomorhic to the de Branges space $B(E^\omega)$ (see Section 1.3 and 1.4). 
According to the theory of de Branges~\cite{MR0229011}, 
the structure of $B(E^\omega)$ is determined by associated canonical system, 
which was described in terms of the shifted Fourier inversion $h_\omega(x)$ of $\Theta_\omega(z)$ under the restriction $\omega>1$.   
On the other hand, the structure of $B(E^\omega)$ is also determined by the reproducing kernel of $K(\Theta_\omega)$:
\begin{equation*}
K_\omega(z,w) = \frac{1}{2\pi i} \, \frac{1 - \overline{\Theta_\omega(z)}\Theta_\omega(w)}{\bar{z} - w} \quad (z,w \in \C^+)  
\end{equation*}
(see Section 1.3). We find that $K_\omega(0,\ast)$ belongs to $L^2(\R)$ by \eqref{106} and \eqref{107}, 
and thus its shifted Fourier inversion ${\mathsf F}_{1/2}^{-1}K_\omega(0,\ast)$ belongs to $L^2((0,\infty),dx)$. 

However, if we obtain ${\mathsf F}_{1/2}^{-1}K_\omega(0,\ast)$ explicitly enough, 
we may define ${\mathsf F}_{1/2}^{-1}K_\omega(0,\ast)$ regardless whether $\Theta_\omega(z)$ is an inner function in $\C^+$. 
In fact, it is carried out by using the weighted summatory function $h_\omega^{\langle 1 \rangle}(x)$ defined below.  
Then sufficient or equivalent conditions for $\Theta_\omega(z)$ to be an inner function in $\C^+$
are given in terms of $h_\omega^{\langle 1 \rangle}(x)$ 
as in Theorem \ref{thm_2}. 
This is the main result in the appendix. 

The function $h_\omega^{\langle 1 \rangle}(x)$ is not only directly related to RH via the innerness of $\Theta_\omega(z)$, 
but also directly related to the operator ${\mathsf H}_\omega$ (Theorem \ref{thm_5}). 
The above discussion clarifies the meaning of a part of functions studied in ~\cite{Su} 
(see the remark after Theorem \ref{thm_3}).   

\subsection{Notation and Results}

Let $B(z;p,q)$ be the incomplete beta function defined by 
\begin{equation*}
B(z;p,q) = \int_{0}^{z} x^{p-1}(1-x)^{q-1} \, dx 
\quad (0 \leq z \leq 1, \, \Re(p)>0, \, \Re(q)>0).
\end{equation*}
%
We use the notation 
\begin{equation*}
\beta(z;p,q) := B(p,q) - B(z;p,q) =  \int_{z}^{1} x^{p-1}(1-x)^{q-1} \, dx,    
\end{equation*}
and understand that $\beta(z;p,q)$ is defined by the integral on the right-hand side 
if $\Re(p) \leq 0$, $\Re(q)>0$, and $0<z<1$. For example, $g_\omega$ of \eqref{201} 
can be written as  
\begin{equation*}
\aligned
g_\omega(x) 
&= \frac{2\pi^\omega}{\Gamma(\omega)}\left[ x^{2-\omega} (1-x^2)^{\omega-1}
- \omega x^{\omega-1} \beta\left(x^2, \frac{3 - 2 \omega}{2},  \omega \right)
\right]. 
\endaligned
\end{equation*}
We define the real-valued function $g_\omega^{\langle 1 \rangle}$ on $(0,\infty)$ by 
\begin{equation}\label{a202}
g_\omega^{\langle 1 \rangle}(x) :=
\int_{x}^{1} \sqrt{\frac{y}{x}} \, g_\omega(y) \, \frac{dy}{y} 
\end{equation}
for $0<x<1$, and $g_\omega^{\langle 1 \rangle}(x)=0$ for $x>1$. 
Then we have 
\begin{equation*}
\aligned
~&g_\omega^{\langle 1 \rangle}(x) = \\
& 
\begin{cases}
~\displaystyle{\frac{4\omega}{2\omega-1} 
\frac{\pi^\omega}{\Gamma(\omega)}
\left\{ x^{\omega-1}
\, \beta\left(x^2, \frac{3-2\omega}{2},\omega \right) - \frac{2\omega+1}{4\omega} \, x^{-1/2} \, 
\beta\left(x^2, \frac{5-2\omega}{4},\omega \right)
\right\}}, & \omega\not=1/2, \\[8pt]
~\displaystyle{\frac{2}{\sqrt{x}} \left( 2 \sqrt{1-x^2} + \log x  - \log(1 + \sqrt{1 - x^2}) \right) }, 
& \omega=1/2
\end{cases}
\endaligned
\end{equation*}
for $0 < x < 1$ by elementary ways. 
Using $g_\omega^{\langle 1 \rangle}$ and  $c_\omega(n)$ of \eqref{203}, 
we define the real-valued function $h_\omega^{\langle 1 \rangle}$ on $(0,\infty)$ by 
\begin{equation} \label{a204}
h_\omega^{\langle 1 \rangle}(x) = 
\displaystyle{\frac{1}{x} \sum_{n=1}^{\lfloor x \rfloor} c_\omega(n) \, g_\omega^{\langle 1 \rangle}\left(\frac{n}{x}\right)}
\end{equation}
for $x>1$, and $h_\omega^{\langle 1 \rangle}(x)=0$ for $0<x<1$. 
Then $h_\omega^{\langle 1 \rangle}$ is well-defined on $(0,\infty)$ and has a support in $[1,\infty)$ as well as $h_\omega$. 
We also have 
\begin{equation} \label{a205}
h_\omega^{\langle 1 \rangle}(x) =  \int_{1}^{x} \sqrt{\frac{y}{x}} \, h_\omega(y) \frac{dy}{y}
\end{equation}
for $x>1$ by definition \eqref{a202}. 
The function $h_\omega^{\langle 1 \rangle}$ is related to $K_\omega(z,w)$ and $\Theta_\omega$ as follows. 
\begin{theorem} \label{thm_3}
Let $\omega>0$. 
\begin{enumerate}
\item[(1)] If $\Theta_\omega(z)$ is an inner function in $\C^+$, we have
\begin{equation*}
F_{1/2}^{-1}K_\omega(0,\ast)(z)
= \frac{1}{2\pi}\left(x^{-\frac{1}{2}}{\mathbf 1}_{(1,\infty)}(x) - h_{\omega}^{\langle 1 \rangle}(x) \right), 
\end{equation*}
where ${\mathbf 1}_{(1,\infty)}$ is the characteristic function of $(1,\infty)$. 
\item[(2)] $\Theta_{\omega}(z)$ is an inner function in $\C^+$ if and only if 
$(x^{-\frac{1}{2}}{\mathbf 1}_{(1,\infty)}(x)-h_{\omega}^{\langle 1 \rangle}(x))$ belongs to $L^2((1,\infty),dx)$. 
\item[(3)] Assume that there exists $x_\omega \geq 1$ such that 
$h_\omega^{\langle 1 \rangle}(x)$ has a single sign for every $x \geq x_\omega$. 
Then $\Theta_\omega(z)$ is an inner function in $\C^+$. 
\item[(4)] Assume that $\lim_{x \to \infty}\sqrt{x}\,h_{\omega}^{\langle 1 \rangle}(x)$ exists. 
Then $\Theta_{\omega}(z)$ is an inner function in $\C^+$. 
\item[(5)] Assume that $\Theta_\omega(z)$ is an inner function in $\C^+$ for all $\omega>0$. 
Then we have 
\begin{equation*}
\sqrt{x} h_{\omega}^{\langle 1 \rangle}(x) = 1 + o(1)
\end{equation*} 
as $x \to +\infty$ for all $\omega >0$. 
\end{enumerate}
\end{theorem}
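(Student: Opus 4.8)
The plan is to prove (1) and (2) together from two explicit shifted-Fourier transform identities, and then to derive (3), (4), (5) from them by Abelian and Tauberian arguments. First I would record
\[
\mathsf{F}_{1/2}\bigl(x^{-1/2}\mathbf{1}_{(1,\infty)}\bigr)(z)=\frac{i}{z},\qquad
\mathsf{F}_{1/2}\bigl(h_\omega^{\langle 1\rangle}\bigr)(z)=\frac{i}{z}\,\Theta_\omega(z),
\]
both valid for $\Im(z)$ large. The first is elementary; the second follows by inserting \eqref{a205}, interchanging the order of integration (Fubini, legitimate by the absolute convergence in Proposition \ref{prop_1}), and evaluating $\int_y^\infty x^{iz-1}\,dx=-y^{iz}/(iz)$. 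Subtracting and using $\Theta_\omega(0)=1$ from \eqref{107} gives $\mathsf{F}_{1/2}(g)(z)=\tfrac{i}{z}(1-\Theta_\omega(z))=2\pi K_\omega(0,z)$, where $g:=x^{-1/2}\mathbf{1}_{(1,\infty)}-h_\omega^{\langle 1\rangle}$. For (1), when $\Theta_\omega$ is inner the section $K_\omega(0,\ast)$ is the reproducing kernel evaluated at the origin, so it lies in $K(\Theta_\omega)\subset H^2=\mathsf{F}_{1/2}L^2((1,\infty))$ and in particular in $L^2(\R)$; Mellin inversion of the displayed identity then yields the claimed formula. For (2) the forward implication is immediate from (1), and for the converse I would note that $g\in L^2((1,\infty))$ makes $G:=\mathsf{F}_{1/2}g$ an element of $H^2(\C^+)$ by Paley--Wiener; the identity $G(z)=\tfrac{i}{z}(1-\Theta_\omega(z))$, first for $\Im(z)$ large and then by meromorphic continuation, forces $\Theta_\omega=1+izG$ to be analytic, hence pole-free, in $\C^+$, after which the Phragm\'en--Lindel\"of argument of Lemma \ref{lem_301} together with \eqref{106} shows $\Theta_\omega$ is inner.

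For (3) I would invoke Landau's theorem: if $h_\omega^{\langle 1\rangle}$ keeps a fixed sign for $x\geq x_\omega$, the point $z=i\sigma$ on the imaginary axis, $\sigma$ being the abscissa of convergence of $\int_1^\infty h_\omega^{\langle 1\rangle}(x)\,x^{1/2+iz}\,\tfrac{dx}{x}=\tfrac{i}{z}\Theta_\omega(z)$, must be a singularity of this transform. Were $\sigma>0$, it would have to be a pole of $\Theta_\omega$, i.e. $\xi(\tfrac12+\omega+\sigma)=0$, which is impossible since $\xi$ is strictly positive on the positive real axis; hence $\sigma\leq 0$, so $\tfrac{i}{z}\Theta_\omega(z)$ and therefore $\Theta_\omega$ are analytic throughout $\C^+$, and one concludes as in (2) that $\Theta_\omega$ is inner. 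Part (4) then reduces to (3): if $\lim_{x\to\infty}\sqrt{x}\,h_\omega^{\langle 1\rangle}(x)$ exists it equals $\lim_{x\to\infty}\int_1^x h_\omega(y)\,y^{-1/2}\,dy$, and by Abel's theorem this coincides with the radial limit at $z=0$ of the continuation $\Theta_\omega$ of $\int_1^\infty h_\omega(y)\,y^{1/2+iz}\,\tfrac{dy}{y}$, which is $\Theta_\omega(0)=1$ by \eqref{107}; being $1\neq 0$, the limit forces $h_\omega^{\langle 1\rangle}(x)>0$ for large $x$, so (3) applies.

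Finally, (5) is the quantitative converse and is where I expect the genuine difficulty. By Proposition \ref{intro_2} the hypothesis is RH, and part (2) already gives $g\in L^2((1,\infty))$, i.e. $1-\sqrt{x}\,h_\omega^{\langle 1\rangle}(x)=\sqrt{x}\,g(x)\in L^2((1,\infty),\tfrac{dx}{x})$; but this is only mean-square, not pointwise, decay. To upgrade it to $\sqrt{x}\,h_\omega^{\langle 1\rangle}(x)=1+o(1)$ I would start from the Mellin inversion $\sqrt{x}\,h_\omega^{\langle 1\rangle}(x)=\tfrac{1}{2\pi}\int_{\Im(z)=c}\tfrac{i}{z}\Theta_\omega(z)\,x^{-iz}\,dz$ and move the contour down to the real axis; inner-ness for this $\omega$ means $\Theta_\omega$ has no poles in $\C^+$, so the only crossing is the simple pole at $z=0$, whose residue supplies the constant $1$. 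The crux is to show the surviving boundary integral, on which $|\Theta_\omega|=1$, tends to $0$; the integrand is integrable neither at $0$ nor at $\infty$, so Riemann--Lebesgue does not apply directly. The hard part will be taming this oscillatory integral: I anticipate needing the full strength "inner for all $\omega>0$" to run a bootstrap, establishing the asymptotic first for large $\omega$ (where the nearest zero of $\zeta(s+\omega)$ is far from the critical line and the contour shift converges unconditionally with a power-saving error) and then descending to every $\omega>0$ by comparing $h_\omega^{\langle 1\rangle}$ across nearby parameters through the uniform $L^2$ bounds of (2). Converting the mean-square control into pointwise $o(1)$ by a Tauberian theorem that substitutes analyticity of $\Theta_\omega$ for a positivity hypothesis is, I expect, the main obstacle.
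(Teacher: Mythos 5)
Your parts (1)--(4) are essentially the paper's own argument. For (3) you use Landau's theorem exactly as the paper does: the abscissa of convergence of the Mellin integral of $h_\omega^{\langle 1\rangle}$ would be a singularity of $\tfrac{i}{z}\Theta_\omega(z)$ on the positive imaginary axis, which would force a real zero of $\xi$. For (1)--(2) your two transform identities and the $L^2$/Paley--Wiener bookkeeping reproduce the paper's computation (the paper carries out the contour shift from $\Im(z)=c$ down to the real axis with explicit error terms, using that $(1-\Theta_\omega(z))/z$ is bounded on $\overline{\C^+}$ when $\Theta_\omega$ is inner; your sketch should make this shift explicit, since a priori $g$ is not known to be in $L^2$ so its boundary Fourier transform is not yet defined). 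For (4) your detour through (3) is correct but redundant: by \eqref{a205} the hypothesis says the Mellin integral of $h_\omega$ converges at $z=0$, and by the standard Laplace-transform theory you already invoke (via Abel), it then converges and is analytic in all of $\Im(z)>0$; that is the whole conclusion, and this is exactly what the paper does --- there is no need to identify the limit as $1$ and pass to positivity.

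Part (5) is where your proposal has a genuine gap, and you say so yourself. Reducing the hypothesis to RH via Proposition \ref{intro_2} (or Proposition \ref{prop_basic}) is the right first move and matches the paper, and part (2) indeed only yields mean-square decay of $1-\sqrt{x}\,h_\omega^{\langle 1\rangle}(x)$. But your proposed upgrade to pointwise $o(1)$ --- shifting the contour of $\tfrac{1}{2\pi}\int \tfrac{i}{z}\Theta_\omega(z)x^{-iz}\,dz$ to the real axis, picking up the residue $1$ at $z=0$, and then ``taming'' the remaining boundary integral on which $|\Theta_\omega|=1$ --- is left as an acknowledged obstacle, with only a speculative bootstrap in $\omega$ offered. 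That oscillatory-integral estimate is precisely the content of the statement, so the proof is not complete. The paper itself does not fight this battle here: after reducing to RH it invokes the asymptotic for the weighted summatory function already established in \cite{Su} (the proof of Theorem 2.3 (2-b) there), where $h_\omega^{\langle 1\rangle}$ was introduced and its behaviour at infinity analysed. So for (1)--(4) you have the paper's proof; for (5) you have correctly located the difficulty but not resolved it, and the missing ingredient is the Tauberian/explicit-formula analysis carried out in \cite{Su}.
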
 
\begin{remark} The case $\lim_{x \to \infty}\sqrt{x}\,h_{\omega}^{\langle 1 \rangle}(x)=0$ 
is allowed in (4), though it does not hold by (5) if RH holds for $\zeta(s)$.   
\end{remark}
\begin{remark} 
Functions $h_{\omega}(x)$ of \eqref{204} and $h_{\omega}^{\langle 1 \rangle}(x)$ of \eqref{a204} 
were introduced and studied in \cite{Su} for more general $L$-functions, 
but notation is different a little. 
The function $h_{\omega}^{\langle 1 \rangle}(x)$ (resp. $h_{\omega}(x)$) 
was denoted by $x^{-\frac{1}{2}}h_{{\mathbf 1},\omega}^{\langle 1 \rangle}(x)$ 
(resp. $x^{-\frac{1}{2}}h_{{\mathbf 1},\omega}^{\langle 0 \rangle}(x)$) in \cite{Su}. 
\end{remark}

\begin{theorem} \label{thm_5} 
Suppose that $\Theta_\omega(z)$ is an inner function in $\C^+$. 
We define 
\[
(\tilde{\mathsf H}_\omega f)(x)=\sqrt{x} \frac{d}{dx}\sqrt{x} \int_{0}^{\infty}  h_\omega^{\langle 1 \rangle}(xy) \, f(y) \, dy
\]
for compactly supported smooth functions $f$. Then $\tilde{\mathsf H}_\omega f$ belongs to $L^2((0,\infty),dx)$, 
and $f \mapsto \tilde{\mathsf H}_\omega f$ is extended to the isometry on $L^2((0,\infty),dx)$
satisfying $\tilde{\mathsf H}_\omega f = {\mathsf H}_\omega f$. 
\end{theorem}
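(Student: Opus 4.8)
The plan is to establish the pointwise (almost everywhere) identity $\tilde{\mathsf H}_\omega f = {\mathsf H}_\omega f$ for every compactly supported smooth $f$; once this is in hand, both the membership $\tilde{\mathsf H}_\omega f \in L^2((0,\infty),dx)$ and the extension to an isometry follow at once, since ${\mathsf H}_\omega$ is already known to be an isometry by Lemma \ref{lem_401}. So the whole proof reduces to transporting the differential operator $\sqrt{x}\frac{d}{dx}\sqrt{x}$ through the integral and recognizing that it undoes the averaging \eqref{a205} that produced $h_\omega^{\langle 1 \rangle}$ from $h_\omega$.

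First I would record the basic relation between $h_\omega^{\langle 1 \rangle}$ and $h_\omega$. Set $g(x) := \sqrt{x}\,h_\omega^{\langle 1 \rangle}(x)$. By \eqref{a205} this equals $\int_{1}^{x} y^{-1/2} h_\omega(y)\,dy$, which is absolutely continuous on $(0,\infty)$ (recall that $h_\omega$ is $L^1$ on every finite interval for $\omega>0$), vanishes on $(0,1]$, and satisfies $g'(x) = x^{-1/2} h_\omega(x)$ for almost every $x$. Writing $\delta_x = \sqrt{x}\frac{d}{dx}\sqrt{x} = x\frac{d}{dx}+\frac{1}{2}$, this says $\delta_x h_\omega^{\langle 1 \rangle} = h_\omega$, and since $\delta_x$ commutes with dilations one has $\delta_x\big(h_\omega^{\langle 1 \rangle}(xy)\big) = h_\omega(xy)$ for each fixed $y$.

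Next I would move the derivative inside the integral, but through $g$ rather than through $h_\omega^{\langle 1 \rangle}$ directly. Since $h_\omega^{\langle 1 \rangle}(xy) = (xy)^{-1/2} g(xy)$, multiplying the defining integral by $\sqrt{x}$ gives $\sqrt{x}\int_{0}^{\infty} h_\omega^{\langle 1 \rangle}(xy) f(y)\,dy = \int_{0}^{\infty} y^{-1/2} g(xy) f(y)\,dy$. Because $f$ is smooth with compact support in $(0,\infty)$, the argument $xy$ ranges over a fixed compact subset of $(0,\infty)$ as $x$ varies over a compact set, so the absolute continuity of $g$ together with Fubini's theorem—applied to $g(x_1 y)-g(x_0 y)=\int_{x_0}^{x_1} y\,g'(ty)\,dt$—justifies differentiating under the integral sign. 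Using $g'(xy)=(xy)^{-1/2}h_\omega(xy)$ I would then obtain $\frac{d}{dx}\int_{0}^{\infty} y^{-1/2} g(xy) f(y)\,dy = \int_{0}^{\infty} y^{1/2} g'(xy) f(y)\,dy = x^{-1/2}\int_{0}^{\infty} h_\omega(xy) f(y)\,dy$, and multiplying by $\sqrt{x}$ yields $\tilde{\mathsf H}_\omega f(x) = \int_{0}^{\infty} h_\omega(xy) f(y)\,dy = {\mathsf H}_\omega f(x)$ almost everywhere, which is exactly \eqref{208}.

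Finally, from $\tilde{\mathsf H}_\omega f = {\mathsf H}_\omega f$ and Lemma \ref{lem_401} I would conclude $\tilde{\mathsf H}_\omega f \in L^2((0,\infty),dx)$ with $\Vert \tilde{\mathsf H}_\omega f\Vert = \Vert f\Vert$; by density of compactly supported smooth functions, $f \mapsto \tilde{\mathsf H}_\omega f$ extends by continuity to the same isometry ${\mathsf H}_\omega$ on all of $L^2((0,\infty),dx)$. The main obstacle is precisely the justification of differentiation under the integral: $g' = x^{-1/2} h_\omega$ is only locally integrable, inheriting the singularities of $h_\omega$ at the positive integers, so one cannot differentiate the kernel $h_\omega^{\langle 1 \rangle}(xy)$ pointwise and must instead route the argument through the absolute continuity of $g$ and Fubini. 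This is the reason for rewriting everything in terms of $g$ rather than working with $h_\omega^{\langle 1 \rangle}$ directly, and it is the only step that requires genuine care.
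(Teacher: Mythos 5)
Your proof is correct, but it takes a genuinely different route from the paper's. You work entirely on the physical side: from \eqref{a205} you observe that $g(x)=\sqrt{x}\,h_\omega^{\langle 1\rangle}(x)=\int_1^x y^{-1/2}h_\omega(y)\,dy$ is locally absolutely continuous with $g'(x)=x^{-1/2}h_\omega(x)$ a.e.\ (using only that $h_\omega$ is locally $L^1$), rewrite $\sqrt{x}\int_0^\infty h_\omega^{\langle 1\rangle}(xy)f(y)\,dy=\int_0^\infty y^{-1/2}g(xy)f(y)\,dy$, and justify differentiation under the integral by the Fubini/absolute-continuity argument; this yields the pointwise identity $\tilde{\mathsf H}_\omega f={\mathsf H}_\omega f$ a.e., after which Lemma \ref{lem_401} supplies the $L^2$-membership and the isometric extension. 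The paper instead works on the Mellin side: it sets $g(x)=\frac{1}{2\pi}\int_{c-i\infty}^{c+i\infty}\Theta_\omega(z)F(-z)\,x^{-\frac{1}{2}-iz}\,dz$ (which is ${\mathsf H}_\omega f$ by the proof of Lemma \ref{lem_401}), uses Proposition \ref{prop_a1} together with contour independence --- which requires the innerness of $\Theta_\omega$ --- to identify $\int_0^\infty h_\omega^{\langle 1\rangle}(xy)f(y)\,dy$ with $x^{-1/2}\int_0^x g(u)u^{-1/2}\,du$, and then differentiates. Your argument is more elementary, needing neither Proposition \ref{prop_a1} nor any contour shift, and it isolates the role of the hypothesis cleanly: the kernel identity $\tilde{\mathsf H}_\omega f={\mathsf H}_\omega f$ for compactly supported smooth $f$ holds without innerness, which enters only through Lemma \ref{lem_401}. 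What the paper's route buys is consistency with the transform-side machinery used throughout Section 4 and an explicit exhibition of $\int_0^\infty h_\omega^{\langle 1\rangle}(xy)f(y)\,dy$ as a weighted average of ${\mathsf H}_\omega f$; both proofs are sound.
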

%
%
\subsection{Proof of Theorem \ref{thm_3}}
%
%
We prove each statement of Theorem \ref{thm_3} separately. At first, we note the following:  
\begin{proposition} \label{prop_a1} For $\omega>0$ and $\Im(z)>1/2+\omega$, we have 
\begin{equation} \label{a207}
\int_{0}^{\infty} h_{\omega}^{\langle 1 \rangle}(x) \, x^{\frac{1}{2}+iz} \, \frac{dx}{x} 
 = \frac{i}{z} \, \Theta_\omega(z),
\end{equation}
where the integral converges absolutely. 
\end{proposition}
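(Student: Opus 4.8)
The plan is to reduce Proposition \ref{prop_a1} to Proposition \ref{prop_1} by exploiting the fact that, through \eqref{a205}, the function $h_\omega^{\langle 1 \rangle}$ is a weighted primitive of $h_\omega$. Indeed, \eqref{a205} says that for $x>1$ we have $h_\omega^{\langle 1 \rangle}(x) = x^{-1/2}\int_1^x h_\omega(y)\,y^{-1/2}\,dy$, and since $h_\omega$ vanishes on $(0,1)$ we may write this as $h_\omega^{\langle 1 \rangle}(x) = x^{-1/2}\int_0^x h_\omega(y)\,y^{-1/2}\,dy$ for all $x>0$. Substituting this into the left-hand side of \eqref{a207} collapses the factor $x^{-1/2}$ against $x^{1/2+iz}\,dx/x = x^{iz-1/2}\,dx$, leaving the iterated integral $\int_0^\infty\big(\int_0^x h_\omega(y)\,y^{-1/2}\,dy\big)\,x^{iz-1}\,dx$. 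The factor $i/z$ should then come out of integrating the pure power $x^{iz-1}$.

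First I would interchange the order of integration over the region $0<y<x<\infty$ by Fubini's theorem, obtaining $\int_0^\infty h_\omega(y)\,y^{-1/2}\big(\int_y^\infty x^{iz-1}\,dx\big)\,dy$. For $\Im(z)>0$ the inner integral converges and equals $-y^{iz}/(iz)$, since the contribution at $x=+\infty$ vanishes. The outer integral then becomes $-\tfrac{1}{iz}\int_0^\infty h_\omega(y)\,y^{iz-1/2}\,dy = -\tfrac{1}{iz}\int_0^\infty h_\omega(y)\,y^{1/2+iz}\,\tfrac{dy}{y}$, which by Proposition \ref{prop_1} is exactly $-\tfrac{1}{iz}\,\Theta_\omega(z)$. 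Since $-1/(iz)=i/z$, this is the desired identity \eqref{a207}.

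The one genuine point to verify, and the place I expect the only real work to be, is the justification of Fubini and of the absolute convergence asserted in the statement. Both reduce to the same estimate: for $\Im(z)>0$ the inner integral of absolute values is $\int_y^\infty x^{-\Im(z)-1}\,dx = \Im(z)^{-1}y^{-\Im(z)}$, so the double integral of absolute values is a constant multiple of $\int_0^\infty |h_\omega(y)|\,y^{-1/2-\Im(z)}\,dy$, which is finite for $\Im(z)>1/2+\omega$ precisely because the integral in Proposition \ref{prop_1} converges absolutely there. The same bound, together with the growth estimate \eqref{est_h} which gives $h_\omega^{\langle 1 \rangle}(x)=O(x^{\omega+\epsilon})$ after integrating, also yields the absolute convergence of the original integral in \eqref{a207} on $\Im(z)>1/2+\omega$ (the only other point being local integrability near $x=1$, where $h_\omega^{\langle 1 \rangle}$ is continuous and vanishes). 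No new analytic input beyond Proposition \ref{prop_1} and \eqref{est_h} is needed.
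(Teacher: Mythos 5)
Your argument is correct, but it takes a different route from the paper. The paper proves Proposition \ref{prop_a1} by repeating the strategy of Section 3.1 for the modified kernel: one computes the Mellin transform of $g_\omega^{\langle 1 \rangle}$ directly (picking up the extra factor $\frac{1}{s-1/2}=\frac{i}{z}$ at the level of the archimedean factor), convolves with the Dirichlet series $\sum c_\omega(n)n^{-s}$, and justifies the interchange by Fubini — a computation parallel to, and independent of, Proposition \ref{prop_1}. You instead treat \eqref{a205} as the starting point and reduce the whole statement to Proposition \ref{prop_1}: writing $h_\omega^{\langle 1\rangle}(x)=x^{-1/2}\int_0^x h_\omega(y)y^{-1/2}\,dy$, swapping the order of integration over $\{0<y<x\}$, and evaluating $\int_y^\infty x^{iz-1}\,dx=-y^{iz}/(iz)$ for $\Im(z)>0$. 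Your Fubini justification is sound: the double integral of absolute values telescopes to $\Im(z)^{-1}\int_0^\infty|h_\omega(y)|\,y^{-1/2-\Im(z)}\,dy$, which is exactly the absolutely convergent integral of Proposition \ref{prop_1}, and the same Tonelli computation already gives the absolute convergence of \eqref{a207} (the appeal to \eqref{est_h} is not even needed for this). What your reduction buys is economy — no need to recompute the Mellin transform of $g_\omega^{\langle 1\rangle}$ or re-examine the beta-function identities; what the paper's parallel computation buys is the explicit archimedean Mellin pair for $g_\omega^{\langle 1\rangle}$, which is consistent with the closed-form expression for $g_\omega^{\langle 1\rangle}$ recorded in the appendix. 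Both proofs are valid on the stated half-plane $\Im(z)>1/2+\omega$.
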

\begin{proof}
This is proved by a way similar to the proof of Proposition \ref{prop_1} in Section 3.1 (see also Lemma 4.2 of \cite{Su}). 
\end{proof}

\noindent
{\bf (3):} By the assumption and a theorem of Landau (e.g. Widder \cite[Chap.II,\S5]{MR0005923}), 
the integral in \eqref{a207} converges for $\Im(z) > v_0$, 
where $i v_0$ is the first pure imaginary singularity of $\Theta_\omega(z)/z$. 
On the other hand, $\Theta_\omega(z)$ has no singularities on the imaginary axis, 
because it is known that $\xi(s)$ has no real zeros. 
Hence $\Theta_\omega(z)$ is regular in $\C^+$. 
It implies that $\Theta_\omega(z)$ is an inner function in $\C^+$ by a way similar to the proof of Lemma \ref{lem_301}. \hfill $\Box$
\medskip

\noindent
{\bf (1) and (2):} Suppose that $\Theta_\omega(z)$ is an inner function in $\C^+$. We have 
\begin{equation*}
h_\omega^{\langle 1 \rangle}(x) = \frac{1}{2\pi}\lim_{U\to\infty}\int_{-U+ic}^{U+ic} \frac{\Theta_{\omega}(z)}{-iz} \, x^{-\frac{1}{2}-iz} \, dz 
\quad (c>1/2+\omega)
\end{equation*}
for $x>1$ by the Mellin inversion formula (e.g. \cite[Theorem 28]{MR942661}),  
since the integral in \eqref{206} converges absolutely for $\Im(z)>1/2+\omega$ 
and $h_\omega^{\langle 1 \rangle}(x)$ is in $C^1(1,\infty)$. 
By the Stirling formula, we have $\Theta_\omega(u+iv) \ll_{\omega,v} u^{-\omega}$ for a fixed $v>1/2+\omega$. 
Therefore 
\begin{equation*}
h_\omega^{\langle 1 \rangle}(x) = \frac{1}{2\pi}\int_{-U+ic}^{U+ic} \frac{\Theta_{\omega}(z)}{-iz} \, x^{-\frac{1}{2}-iz} \, dz +O(x^{c-\frac{1}{2}}U^{-\omega}) 
\quad (c>1/2+\omega).
\end{equation*}
Using the well-known formula 
\begin{equation*}
x^{-\frac{1}{2}} = \frac{1}{2\pi}\int_{-U+ic}^{U+ic} \frac{1}{-iz} \, x^{-\frac{1}{2}-iz} \, dz +O(x^{c-\frac{1}{2}}(\log x)^{-1}U^{-1})
\end{equation*}
for $x>1$ and large $U>1$, we have 
\begin{equation*}
x^{-\frac{1}{2}} - h_\omega^{\langle 1 \rangle}(x) 
= \frac{1}{2\pi}\int_{-U+ic}^{U+ic} \frac{1-\Theta_{\omega}(z)}{-iz} \, x^{-\frac{1}{2}-iz} \, dz +O(x^{c-\frac{1}{2}}U^{-\omega}) + 
O(x^{c-\frac{1}{2}}(\log x)^{-1}U^{-1}).
\end{equation*}
Here the integrand $(1-\Theta_\omega(z))/z$ is bounded on $\C^+ \cup \R$. 
Thus the residue theorem gives
\begin{equation*}
x^{-\frac{1}{2}} - h_\omega^{\langle 1 \rangle}(x) 
= \frac{1}{2\pi}\int_{-U}^{U} \frac{1-\Theta_{\omega}(z)}{-iz} \, x^{-\frac{1}{2}-iz} \, dz +O(x^{c-\frac{1}{2}}U^{-\omega}) + 
O(x^{c-\frac{1}{2}}(\log x)^{-1}U^{-1}),
\end{equation*}
since integrals on $\int_{\pm U+i0}^{\pm U+ic}$ are bounded by $x^{c-\frac{1}{2}}(\log x)^{-1}U^{-1}$.
Tending $U$ to $+\infty$ for fixed $x>1$, we have 
\begin{equation} \label{303}
x^{-\frac{1}{2}} - h_\omega^{\langle 1 \rangle}(x) 
= \frac{1}{2\pi}\int_{-\infty}^{\infty} \frac{1-\Theta_{\omega}(u)}{-iu} \, x^{-\frac{1}{2}-iu} \, du 
\quad (x>1). 
\end{equation}
This implies that $x^{-\frac{1}{2}}{\mathbf 1}_{(1,\infty)}(x) - h_\omega^{\langle 1 \rangle}(x)$ 
belongs to $L^2((1,\infty),dx)$, since $(1-\Theta_{\omega}(u))/u$ belongs to $L^2(\R)$ by \eqref{106} and \eqref{107}.  
In addition, \eqref{303} implies (1). 

Suppose that $x^{-1/2}{\mathbf 1}_{(1,\infty)}-h_{\omega}^{\langle 1 \rangle}$ belongs to $L^2((1,\infty),dx)$. 
Then the integral
\begin{equation}\label{304}
\int_{0}^{\infty} \Bigl[x^{-\frac{1}{2}}{\mathbf 1}_{(1,\infty)}(x)-h_{\omega}^{\langle 1 \rangle}(x) \Bigr] \, x^{\frac{1}{2}+iz} \,  \frac{dx}{x}  
\end{equation}
converges on the real line in $L^2$-sense, and converges absolutely for $\Im(z)>0$ (\cite[Chap.II, \S10]{MR0005923}). 
Hence integral \eqref{304} defines an analytic function in $\C^+$. 
By Proposition \ref{prop_a1}, integral \eqref{304} is equal to $(1-\Theta_\omega(z))/(iz)$ for $\Im(z)>1/2+\omega$. 
Hence we find that $\Theta_\omega(z)$ is an analytic function in $\C^+$, 
and it implies that $\Theta_\omega(z)$ is an inner function in $\C^+$ as well as the proof of (3). 
\hfill $\Box$
\medskip

\noindent
{\bf (4):} By formula \eqref{a205}, the assumption implies that the integral of \eqref{206} converges at $z=0$ in the sense 
\begin{equation*}
\lim_{T \to \infty} \int_{1}^{T} h_\omega(x) \, x^{\frac{1}{2}+i0} \, \frac{dx}{x}. 
\end{equation*}
This implies that the integral of \eqref{206} converges for $\Im(z)>0$, 
and defines an analytic function in $\C^+$ (\!\!\cite[Chap.II, \S1]{MR0005923}). 
Hence $\Theta_\omega(z)$ is an inner function in $\C^+$ by \eqref{106}. 
\hfill $\Box$
\medskip

\noindent
{\bf (5):} If $\Theta_\omega(z)$ is an inner function in $\C^+$ for all $\omega>0$, ${\rm RH}(A^\omega)$ holds for all $\omega>0$. 
Hence RH holds by Proposition \ref{prop_basic}. 
Then we obtain (5) by a way similar to the proof of Theorem 2.3 (2-b) in \cite{Su}. 
\hfill $\Box$
%
%
\subsection{Proof of Theorem \ref{thm_5}}
%
%
Suppose that $\Theta_\omega(z)$ is an inner function in $\C^+$. 
Let $f$ be a compactly supported smooth function. 
Put $F={\mathsf F}_{1/2}f$ and define 
\[
g(x)=\frac{1}{2\pi} \int_{c-i\infty}^{c+i\infty} 
\Theta_\omega(z)F(-z) \, x^{-\frac{1}{2}-iz} \, dz
\] 
for $c \geq 0$. Then the right-hand side is independent of $c \geq 0$ by the assumption, 
and defines an member of $L^2((0,\infty),dx)$ by \eqref{106}. Moreover, we have
\[
\int_{0}^{x} \frac{g(u)}{\sqrt{u}} \, du = \frac{1}{2\pi} \int_{c-i\infty}^{c+i\infty} 
\Theta_\omega(z)F(-z) \, \frac{x^{-iz}}{-iz} \, dz. 
\]  
for $c>0$. On the other hand,  by Proposition \ref{a207}, we obtain 
\[
\int_{0}^{\infty} h_\omega^{\langle 1 \rangle}(xy) \, f(y) \, dy
=
\frac{1}{2\pi} \int_{c'-i\infty}^{c'+i\infty} 
\frac{\Theta_\omega(z)}{-iz}F(-z) \, x^{-\frac{1}{2}-iz} \, dz
\]
for $c' \gg 0$. Hence
\[
\frac{1}{\sqrt{x}}\int_{0}^{x} \frac{g(u)}{\sqrt{u}} \, du
=\int_{0}^{\infty} h_\omega^{\langle 1 \rangle}(xy) \, f(y) \, dy. 
\]
This implies that $g = \tilde{\mathsf H}_\omega f$. Thus $\tilde{\mathsf H}_\omega f$ is defined almost everywhere 
and belongs to $L^2((0,\infty),dx)$, since $g$ belongs to $L^2((0,\infty),dx)$.    
Moreover we obtain $g = {\mathsf H}_\omega f$ by the definition of $g$ and the latter half of the proof of Lemma \ref{lem_401}. 
Hence $\tilde{\mathsf H}_\omega f = {\mathsf H}_\omega f$, and it implies the extension of $\tilde{\mathsf H}_\omega$ to $L^2((0,\infty),dx)$. 
\hfill $\Box$

\bibliographystyle{amsplain}
\bibliography{biblio}
\end{document}